\makeatletter \@addtoreset{equation}{section} \makeatother
\makeatletter \@addtoreset{enunciato}{section} \makeatother
\newcounter{enunciato}[section]
\newtheorem{ittheorem}{Theorem}
\newtheorem{itlemma}{Lemma}
\newtheorem{itproposition}{Proposition}
\newtheorem{itdefinition}{Definition}
\newtheorem{itremark}{Remark}
\newtheorem{itclaim}{Claim}
\newtheorem{itfact}{Fact}
\newtheorem{itconjecture}{Conjecture}
\newtheorem{itcorollary}{Corollary}
\newenvironment{theorem}{\addtocounter{enunciato}{1}
\begin{ittheorem}}{\end{ittheorem}}
\newenvironment{lemma}{\addtocounter{enunciato}{1}
\begin{itlemma}}{\end{itlemma}}
\newenvironment{proposition}{\addtocounter{enunciato}{1}
\begin{itproposition}}{\end{itproposition}}
\newenvironment{definition}{\addtocounter{enunciato}{1}
\begin{itdefinition}}{\end{itdefinition}}
\newenvironment{corollary}{\addtocounter{enunciato}{1}
\begin{itcorollary}}{\end{itcorollary}}
\begin{document}
\title{Random spanning forests,\\ Markov matrix spectra and well distributed points}

\author{\renewcommand{\thefootnote}{\arabic{footnote}}
L.\ Avena\footnotemark[1],
\renewcommand{\thefootnote}{\arabic{footnote}}
A.\ Gaudilli\`ere\footnotemark[2]}

\date{}

\footnotetext[1]{MI, University of Leiden, The Netherlands. E-mail: l.avena@math.leidenuniv.nl}
\footnotetext[2]{Aix Marseille Universit\'e, CNRS, Centrale Marseille, I2M, UMR 7373, 13453 Marseille, France. E-mail: alexandre.gaudilliere@math.cnrs.fr}

\maketitle

\begin{abstract}
This paper is a variation
on the uniform spanning tree theme.
We use random spanning forests
to solve the following problem:
for a Markov process
on a finite set of size $n$,
find a probability law
on the subsets of any given size $m \leq n$
with the property that 
the mean hitting time 
of such a random target 
does not depend
on the starting point of the random walk.
We then explore the connection
between random spanning forests
and infinitesimal generator spectrum.
In particular we give an almost probabilistic proof
of an algebraic result due to Micchelli and Willoughby~\cite{MW79}
and used by Fill~\cite{F09} and Miclo~\cite{M10}
to study the convergence to equilibrium 
of reversible Markov chains.
We finally introduce 
some related fragmentation and coalescence processes,
emphasizing algorithmic aspects, and give an extension
of Burton and Pemantle transfer current theorem~\cite{BP93}
to the non reversible case.

\vspace{0.5cm}\noindent
{\it MSC 2010:} primary: 05C81,60J20,15A15; secondary: 15A18,05C85.  \\
{\it Keywords:} Finite networks, spectral analysis,
	spanning forests, determinantal processes, transfer current theorem,
	random sets, hitting times, local equilibria, 
	Wilson's algorithm, random partitions, coalescence and fragmentation.
\end{abstract}

\newpage


\section{A random set problem and a forest measure}
\label{s1}

\subsection{``Well distributed points'' in a given graph}\label{chessboard}
Let us consider the following problem. 
We have a square chessboard with sides of length $2l$ and a simple random walk on it.
More precisely, think the chessboard as the square lattice box $\mathcal{X}=\{1, \cdots, 2l\}^2$ 
with the simple random walk $X$ on $\mathcal{X}$.
Denote by $T_R$ the hitting time of a set $R\subset\mathcal{X}$ for the walk $X$ and by $P_x$ the law of $X$ starting from $x\in\mathcal{X}$.
Can you find a probability law $\mathbb{P}$ on the subsets $R$ of $\mathcal{X}$ with cardinality $|R|=|\mathcal{X}|/2$  such that 
\begin{equation}
	\mathbb{E}\left[E_x\left[T_R\right]\right] \text{ does not depend on } x\text{ ?}
\end{equation}
In other words, can you sample $2 l^2$ ``well distributed points'' among the $4 l^2$ points of ${\cal X}$?
In this example, a possible simple answer is the following:
take $R$ to be the set of either white or black squares of the chessboard with probability $1/2$.

One could then raise the following questions:
\begin{itemize}
\item What if the random subsets $R$ are required to have any other cardinality $|R|=m \leq 4 l^2$? 
\item What if, instead of the chessboard $\mathcal{X}$, we consider any other finite weighted, possibly oriented, graph? 
\end{itemize}
We notice that the case $m = |{\cal X}|$ is trivial,
and that in the case  $m = 1$, it is known that it suffices
to choose the point in $\mathcal{X}$ according
to the stationary measure of the walk (see e.g. Lemma 10.8 in~\cite{LPW08}).
One of the main goal of this paper is to answer these questions
for $1 < m < |{\cal X}|$.

In the sequel, we work on a finite oriented weighted graph with its naturally associated continuous time Markov process.
We study a certain probability measure on the set of spanning rooted oriented forests on this graph.
It will turn out that the set of roots of the forest sampled from this measure, with conditioning on the number of roots,
provides a solution to our random set problem in full generality.
As far as practical sampling issues are concerned, by using 
an algorithm due to Wilson and Propp~\cite{W96, PW98} we can sample this measure without conditioning.
Furthermore, under the assumption that the generator of the random walk
associated with the starting weighted graph has only real eigenvalues,
we explain how to get a sample with an approximate prescribed number $m$ of roots within an error of order $\sqrt m$.
In Section \ref{setting} below, we introduce the main framework and notation, 
and in Section \ref{Paper} we describe the structure of the paper and the results we derive.

\subsection{Forest measures}\label{setting}
Let $X$ be a Markov process on a finite state space $\mathcal{X}$, with $|\mathcal{X}|=n$.
Assume $X$ is irreducible with generator given by
\begin{equation}\label{MarkovGenerator}
(Lf)(x)=\sum_{y\in {\cal X}}w(x,y)[f(y)-f(x)],\quad x\in \mathcal{X}, 
\end{equation}
with  $f:\mathcal{X}\rightarrow \mathbb{R}$ arbitrary and 
$\{ w(x,y)\in [0,\infty): (x,y)\in \mathcal{X}\times \mathcal{X}\}$ 
a given collection of non-negative transition rates.
Note that such a Markov process has variable speed depending on the current state, namely, if the chain is at position
$x$, the next jump will be performed after an exponential time of rate 
\begin{equation}\label{VariableSpeed}
w(x)=\sum_{y\in \mathcal{X}\setminus \{x\}}w(x,y)<\infty. 
\end{equation}
The collections of rates induces a structure of oriented weighted graph on $\mathcal{X}$.
In fact, consider the set of oriented edges
\begin{equation}\label{oriented edges}
\mathcal{E}=\left\{(x,y)\in \mathcal{X}\times \mathcal{X} :\: x\neq y \text{  and  } w(x,y)>0\right\}, 
\end{equation}
then $\mathcal{G}=(\mathcal{X},\mathcal{E})$ is a weighted oriented graph.

The main object of our investigation is a measure on the spanning rooted forests of ${\cal G}$.
These are the oriented subgraphs of ${\cal G}$ that can be described in the following way.
Call an ${\cal X}$-spanning {\em unrooted} forest 
any simple undirected graph without cycle
and with ${\cal X}$ as set of vertices.
The connected components of such a graph are trees.
By specifying a root, that is a particular vertex,
in each of these trees, we can define an oriented graph 
$\phi$ by orienting the edges of each rooted tree
towards its root.
If $\phi$ is a subgraph of $G$,
i.e., if each (oriented) edge of $\phi$ belongs to ${\cal E}$,
we call it a spanning rooted forest of ${\cal G}$.
We will identify $\phi$ with the collection of its edges,
that is with a subset of ${\cal E}$,
and we call $\mathcal{F}$ the set of the spanning rooted forests of ${\cal G}$.
In particular $\emptyset \in {\cal F}$
is the spanning forest made of $n$ degenerated trees
reduced to simple roots.
For $\phi\in \mathcal{F}$ we define the weight of a such a forest 
\begin{equation}\label{ForestWeight}
	w(\phi)=\prod_{e\in\phi}w(e)
	\,.
\end{equation}

\begin{definition}{\bf (Standard forest measure)}\label{TheForest}
Denote by $\rho(\phi)\subset \mathcal{X}$ the set of roots of a forest $\phi\in\mathcal{F}$.
Fix $q>0$ and define on $\mathcal{F}$ the measure $w_q$ given by
\begin{equation}\label{StandardForestMeasure}
w_q(\phi)=q^{|\rho(\phi)|}\prod_{e\in\phi}w(e)=q^{|\rho(\phi)|}w(\phi)
\,,
\quad \phi \in {\cal F}
\,.
\end{equation}
By normalizing it via the partition function
\begin{equation}\label{StandardPartitionFn}
Z(q)=\sum_{\phi\in \mathcal{F}} w_q(\phi), 
\end{equation}
we denote the resulting probability measure by 
\begin{equation}\label{StandardForestProb}
\nu_q(\phi)=\frac{w_q(\phi)}{Z(q)},\quad \phi\in{\cal F}.
\end{equation}
We call \emph{standard measure},  \emph{standard partition function} and \emph{standard probability measure}, 
the objects defined by equations \eqref{StandardForestMeasure}, \eqref{StandardPartitionFn}, and \eqref{StandardForestProb}, respectively.
\end{definition}

\medskip\par\noindent
{\bf Remarks:} In the case of symmetric rates $w(x, y)$,
there are obvious similarities between the weights appearing in equation (\ref{StandardForestMeasure})
and those of Fortuyn-Kasteleyn model. We stress here the main differences.
FK-percolation is defined on spanning graphs that are not required to be forests.
However, in the zero limit of its main parameters, properly rescaled, the model does converge
to a measure on spanning forests (see e.g.~\cite{G06}).
Nevertheless, our forests are rooted and this extra structure introduces an entropic factor
in comparison (by projection on unrooted forests) with this zero limit of FK-percolation.
To make it precise, let us call $\bar {\cal F}$ the set of unrooted spanning forests
of $\bar {\cal G} = ({\cal X}, \bar {\cal E})$, with $\bar {\cal E}$ obtained from
${\cal E}$ by identifying each edge $(x, y)$ with it `opposite' $(y, x)$.
To each rooted forest $\phi \in {\cal F}$ is associated, by construction, 
a unique unrooted forest $\bar \phi \in \bar {\cal F}$.
On the one hand, the weight associated in (the zero limit of) FK-percolation 
to each $\bar \phi \in \bar {\cal F}$ is proportional to
\begin{equation}
	w_{FK}\bigl(\bar \phi\bigr) = q^{\left|{\cal T}\left(\bar \phi\right)\right|} \prod_{\bar e \in \bar \phi} w(\bar e) 
	\,,
\end{equation}
where ${\cal T}\bigl(\bar \phi\bigr)$ is the set of the connected component
of $\bar \phi$, i.e., the set of all maximal trees contained in $\bar \phi$.
On the other hand, if $\Phi_q$ is a random forest with law $\nu_q$,
then, for each $\bar \phi \in \bar {\cal F}$,
\begin{equation}\label{toussaint}
	{\mathbb P}\left(
		\bar \Phi_q = \bar \phi
	\right)
	= \frac{1}{Z(q)} q^{\left|{\cal T}\left(\bar \phi\right)\right|}
	\prod_{\bar e \in \bar \phi} w(\bar e)
	\prod_{\bar \tau \in {\cal T}\left(\bar \phi\right)} |V(\bar \tau)|
	\,,
\end{equation}
with $V(\bar \tau)$ the set of vertices spanned by the tree $\bar \tau$
(by seeing $\bar \tau$ as a set of edges, one has $|V(\bar \tau)| = 1 + |\bar \tau|$).
There are indeed, $\prod_{\bar \tau \in {\cal T}\left(\bar \phi\right)} |V(\bar \tau)|$
ways of choosing roots for the trees of $\bar \phi$.
We get by projection a cluster size biased version of the zero limit of FK-percolation.

We also note that 
the weights in (\ref{ForestWeight}) are those associated 
by Freidlin and Wentzell
with the so called ``$W$-graphs''~\cite{FW98}
and we will recover some of their results in this paper.
Our standard forest measure has also been studied in various works.
For example to sample points from the stationary measure of the random walk~\cite{PW98},
or to study in~\cite{JRS11} recurrent configurations of Abelian dissipative sandpile
introduced in~\cite{TK00}.
This spanning forest measure and other associated objects
we will discuss later are actually a variation on the theme
of uniform spanning trees and loop-erased random walks.
We refer to~\cite{LJ11} and references therein for the vast literature
on the subject.

\medskip\par
As it will become clear, the measure in Definition \ref{TheForest} and the associated partition function encode several information related to the chain $X$.
At the occurrence, we will derive results related with slightly more general measures and partition functions. 
To this aim, we introduce here some further notation. 
Let us first introduce a natural generalization of the measure
in Definition \ref{TheForest}.
Given a collection of extra weights $\{q(x)\in[0,\infty] :\: x\in\mathcal{X}\}$, 
let $Q = (Q(x, y))_{x, y \in {\cal X}}$ be the diagonal matrix 
defined by $Q(x, x) = q(x)$ for $x \in {\cal X}$ (and $Q(x, y) = 0$ when $x \neq y$). 
We anticipate that these extra weights will be interpreted as killing rates for the chain $X$.
Set 
\begin{equation}\label{gammino}
	S=\{x\in\mathcal{X}:\: q(x)=+\infty \}
\end{equation}
and define the measure $w_Q$ by
\begin{equation}\label{ForestMeasure}
w_Q(\phi)=w(\phi)\prod_{x\in\rho(\phi)\setminus S}q(x)\mathbbm{1}_{\{S\subset \rho(\phi)\}}.
\end{equation}

By assuming that there is at least one $x \in {\cal X}$ with $q(x) > 0$
we can turn $w_Q$ into a probability measure on $\mathcal{F}$ by normalizing it via the partition function:
\begin{equation}\label{PartitionFn}
Z_Q=\sum_{\phi\in \mathcal{F}} w_Q(\phi)
\end{equation}
and we denote the resulting probability measure by 
\begin{equation}\label{ForestProb}
\nu_Q(\phi)=\frac{w_Q(\phi)}{Z_Q},\quad \phi\in{\cal F}. 
\end{equation}
This is the general form of the probability measure at the core of our investigation.

When answering the questions raised in Section \ref{chessboard}, we need the following special
case of the generalized measure in \eqref{ForestMeasure}.
For a given subset $R\subset\mathcal{X}$, suppose that the collection of extra weights $\{q(x)\in[0,\infty] :\: x\in\mathcal{X}\}$ is such that 
\begin{equation}\label{KillingR}
q(x)=
\left\{
\begin{array}{ll}
+\infty & \text{if } x \in R, \\
q \geq 0 \mbox{ ($q > 0$ if $R = \emptyset$)} & \text{otherwise}.
\end{array}
\right.
\end{equation}
In this case, $S=R$ and we write
\begin{equation}\label{Rmeasures}
w_{q, R}(\phi)= q^{|\rho(\phi)\setminus R|} w(\phi)\mathbbm{1}_{\{R\subset \rho(\phi)\}}
\,, 
\quad Z_R(q) = \sum_{\phi \in {\cal F}} w_{q, R}(\phi)
\,,
\quad \nu_{q, R}(\phi) = \frac{w_{q, R}(\phi)}{Z_{q, R}(\phi)}
\,,
\end{equation}
for the associated measure, partition function and probability, $w_Q(\cdot)$, $Z_Q$ and $\nu_Q$, respectively.
In particular, for $q=0$ and $R \neq \emptyset$, $w_{0, R}(\phi)=w(\phi)\mathbbm{1}_{\{\rho(\phi)=R\}}$ and $Z_R(0)=\sum_{\phi:\rho(\phi)=R}w(\phi)$.
Note further that, when $R=\emptyset$ (and $q > 0$),
we recover the \emph{standard} measure and partition function, $w_{q, \emptyset}(\cdot)=w_q(\cdot)$ and $Z_{\emptyset}(q)=Z(q)$.

In the sequel we will denote by $\Phi_Q$ a random variable on a probability space $(\Omega, {\mathbb P})$
with values in ${\cal F}$ and law $\nu_Q$.
We will also write $\Phi_{q, R}$ and $\Phi_q$ in the corresponding special cases.

\subsection{Results and structure of the paper}\label{Paper}

\subsubsection{Main results}
Our analysis of the forest measures introduced above will lead us to several results.
Before describing them and the organization of the paper,
we emphasize herein what we consider as our four main results.

\smallskip\par\noindent
{\bf Determinantal roots:}
First, in Theorem \ref{DeterminantalRoots} we prove that the set of
roots $\rho(\Phi_Q)$ is a determinantal process. 
In particular, denoting by
\begin{equation}
	K_q(x,y)=P_x(X(T_q)=y)
	\,,
	\quad x, y \in {\cal X}
	\,,
\end{equation}
the transition probabilities
of the Markov chain $X$ in \eqref{MarkovGenerator} observed at independent exponentially distributed times $T_q$
of parameter $q$,
we show that 
\begin{equation}
	{\mathbb P}\left(A\subset \rho\left(\Phi_q\right)\right)={\rm det}_A (K_q), \quad A\subseteq \mathcal{X},
\end{equation}
with ${\rm det}_A (K_q)$ being the determinant of $K_q$ restricted to $A$ (see Section \ref{Notation}
below for the notation).
This echoes Burton and Pemantle transfer current theorem (\cite{BP93}, Theorem 1.1).
on spanning trees associated with reversible Markov processes.
Our generator $L$ and our kernel $K_q$ are however
not required to be reversible (and they possibly have complex eigenvalues)
and we present a direct proof not relying on transfer currents.
We will actually later use our random rooted forests
to prove transfer current theorems for random rooted and unrooted spanning {\em trees}
associated with non-reversible Markov processes.

\smallskip\par\noindent
{\bf Random target:}
The second result is an answer to the questions in the introductory
Section~\ref{chessboard}. In fact, in Theorem~\ref{HittingRootsThm} we
give a formula for the hitting times of random sets constituted by the
roots of our standard random forests, with or without conditioning on
having a fixed number of roots. In particular, such a formula is
independent of the starting point $x$.
While Wilson and Propp algorithm gives a way to sample the unconditioned measure,
in Section~\ref{HittingTimes} we explain 
how to obtain, in absence of complex eigenvalues, 
a sample with approximately $m$ roots, with an error of order $\sqrt m$,
for any $m \leq n$.

\smallskip\par\noindent
{\bf Local equilibria:}
Our third result concerns the proof on an algebraic statement on
symmetric matrices derived by Micchelli and Willoughby~\cite{MW79},
Theorem 3.2 therein.  This theorem has been used in~\cite{F09, M10}
to study absorption times of reversible Markov chains
on general finite graphs, as a key tool to define, in such a general setting,
the local equilibria introduced in~\cite{DM09}.
In~\cite{M10} the author motivates the importance of having a probabilistic interpretation of
Micchelli and Willoughby algebraic result. 
In Section \ref{MW} we restate their result and give,
by means of our standard forest measure, a probabilistic presentation of their proof.

\smallskip\par\noindent
{\bf Transfer current:}
These three main results focus on the root process $\rho(\Phi_Q)$ or $\rho(\Phi_q)$
and show its deep connection with the spectrum of $L$.
In Section~\ref{archi}
we start instead, like in~\cite{C13}, from the study of the whole edge process of the spanning {\em forest} $\Phi_Q$
to give in Theorem~\ref{corvaro}
an extension of transfer current theorems for spanning {\em trees}
to the non reversible case.
This is our last main result.

\subsubsection{Organization of the paper}
The rest of the paper is organized as follows.

\smallskip\par\noindent
{\bf Background material:} Section~\ref{Background} is a warm-up section where we provide some known
background material. In Section~\ref{loops} we prove
in a slightly different way a result originally due to Marchal~\cite{M00} on loop-erased trajectories
(Proposition \ref{Marchal}). 
In Section~\ref{Wilson} we recall Wilson's algorithm
(Definition~\ref{Wilson}) and, following~\cite{M00, PW98}, 
we show how to sample our unconditioned measures (Corollary~\ref{WilsonProb}).

\smallskip\par\noindent
{\bf Results:} Section~\ref{ForRoots} presents the first analysis on our forest
measure, mainly focusing on the root process.
In Theorem~\ref{PartitionThm} and Corollary~\ref{Bernoullicity} therein
we show some connection of this measures with the spectrum of the chain $X$.
We prove that the set of roots is a determinantal process (Theorem~\ref{DeterminantalRoots})
and compute its cumulants, or truncated correlation functions (Lemma~\ref{nebbioline}).
In Section~\ref{HittingTimes} we prove Theorem~\ref{HittingRootsThm}
on the hitting times of the root set,
answering the questions raised in Section~\ref{chessboard}. 
We also compute the mean return time in the set of roots
for the Markov process started from a uniformly chosen root
(Theorem~\ref{edera}) and estimate the expected value
of the largest mean hitting time of the root set
(Theorem~\ref{blu}).
Section~\ref{MW} presents Micchelli and Willoughby~\cite{MW79}
result and proof in a probabilistic way. 
In Section~\ref{marmelata} we mention two coalescence and fragmentation
processes associated with our measures.
One of them give some information on the ``rooted partition''
induced by our spanning forests (Proposition \ref{LocalEquilibria}).
The other one is obtained
by coupling together all the standard forest
measures for different values of $q$
and raises a number of open questions.
In Section~\ref{archi} we study the full edge process
to extend classical transfer current theorems
in Theorem~\ref{corvaro}.

\smallskip\par\noindent
{\bf Appendix:} 
Four appendix sections are devoted to known results used along the paper
and that we derive in our context in order to have a self-contained work.
In Appendix~\ref{SCHUR},
we recall what is the Schur complement for block matrices
and its probabilistic interpretation (Proposition~\ref{SchurProb}).
In Appendix~\ref{Wentzell},
we give different proofs of two lemmas from Freidlin and Wentzell
(Lemmas 3.2, 3.3 in~\cite{FW98}) on hitting distribution and times of
subsets of the graph, again by analysing of our forest measure. 
This is used in Section~\ref{HittingTimes}. 
Appendix~\ref{DividedDiff} concerns the notion of divided differences
which are used in Section~\ref{MW}. 
We state three equivalent definitions and prove a related lemma.
In Appendix~\ref{merlo} we write in our context the proof of Theorem~\ref{borgorosa},
which is due to Chang~\cite{C13}
and which we use in Section~\ref{archi}.


To conclude this introductory section and to simplify the reading, we fix here some notation. Further notation will be introduced at the occurrence.

\subsubsection{Main notation}\label{Notation}
\begin{description}
\item[\em Sets] \hfill 
	\begin{description}
	\item[Spaces:] 
		$\mathcal X$ will be our reference state space of size $n$ for the irreducible Markov process $X$.
		In the sequel, we work with extensions of $\mathcal X$ which will be denoted by 
		$\bar{\mathcal X}$ or with more general spaces denoted by $\mathcal Y$.
	\item[Subsets:]
		the symbols $\subset$ and $\subsetneq$ will be used as inclusion and strict inclusion, respectively.
		Subsets will be generally denoted by capital letters: $A,B,R,S$.
	\item[Complement:]
		the complement of a set $A$ will be denoted by $A^c$,
		and it will be clear from the context,
		with respect to which set ${\cal Y} \supset A$ is this complement ${\cal Y} \setminus A$ defined.
	\end{description}

\item[\em Graphs] \hfill
	\begin{description}
	\item[Edges:]
		$\mathcal{E}$ introduced in \eqref{oriented edges} stands for the set of oriented edges on $\mathcal X$.
		For $e = (x, y) \in {\cal E}$, $w(e) = w(x, y)$ is its associated weight.
	\item[Extreme points of oriented edges:]
		for an oriented edge $e=(x,y)\in \mathcal{X}\times \mathcal{X}$,
		we denote the starting and the ending points of $e$
		and by $e_- = x$ and $e^+ = y$. 
	\item[Reversed edges:]
		for an oriented edge $e = (x, y)$, $-e$ stands for its `opposite' $-e = (y, x)$.
	\item[Forest space:]
		$\mathcal{F}$ denotes the set of spanning rooted oriented forests on $\mathcal{X}$.
	\item[A given forest:]
		elements of $\mathcal{F}$ are denoted by $\phi\in \mathcal{F}$ and are identified with subsets of $\mathcal{E}$.
	\item[Roots:]
		given $\phi\in \mathcal{F}$, $\rho(\phi)$ denotes the set of the roots of the trees in $\phi$. 
	\item[Tree associated with a given vertex:]
		given $x$ in ${\cal X}$ and $\phi \in {\cal F}$
		we denote by $\tau_x(\phi)$ the unique maximal tree in $\phi$ that covers $x$.
		We write $\rho(\tau_x(\phi)) = \{x\}$ when $\tau_x(\phi)$ is rooted at $x$.
	\end{description}

\item[\em Matrices] \hfill
	\begin{description}
	\item[Identity:]
		for $A \subset {\cal Y}$ we denote by $\mathbbm{1}_A$
		the identity matrix $\mathbbm{1}_A = (\delta_{xy})_{x, y \in A}$
		and we identify $\mathbbm{1}_A$ with the identity operator
		on the space of functions $f : A \rightarrow \mathbbm{R}$.
		We also write $\mathbbm{1}$ for $\mathbbm{1}_{{\cal Y}}$.
	\item[Restriction of a matrix:]
		given a matrix $M = (M(x, y))_{x, y \in {\cal X}}$, 
		for any subset $A$ of $\mathcal{X}$, $[M]_A$ stands for the restriction of $M$
		to its elements that are doubly indexed in $A$: $[M]_A = (M(x, y))_{x, y \in A}$.
	\item[Determinants:]
		$\det_A(M)$ will denote the determinant of the matrix obtained from $M$ 
		by removing all the lines and columns with indexes outside $A\subset\mathcal{X}$, i.e., $\det_A(M)=\det([M]_A)$.
	\item[Characteristic polynomial:]
		we will often write $\det(\lambda - M)$ instead of $\det(\lambda {\mathbbm 1} - M)$
		for the characteristic polynomial of $M$.
\end{description}

\item[\em Markov processes] \hfill
	\begin{description}
	\item[Continuous time processes and discrete time random walks:]
		$X$ and $Y$ will denote Markov processes on the finite spaces $\mathcal{X}$ and $\mathcal{Y}$, respectively.
		$\hat X$ and $\hat Y$ will denote some associated discrete-time Markov chains,
		which generally {\em are not} obtained from $X$ and $Y$ by taking the sequence of positions of $X$ and $Y$
		at their successive jumps from a position in ${\cal X}$ and ${\cal Y}$ to a distinct one. 
	\item[Generators:]
		$L$ and $\mathcal{L}$ denote the generators of the Markov processes $X$ and $Y$ respectively. 
		For a given subset $A$ of ${\cal Y}$,
		${\cal L}_A$, $[{\cal L}]_A$ and ${\cal L}^A$ respectively
		stand for the Markovian or sub-Markovian generators of 
		the process $Y$ {\em restricted to} $A$,
		the process $Y$ {\em killed outside} $A$ and
		the {\em trace of\/} $Y$ {\em on} $A$ respectively.
		In other words, if the action of ${\cal L}$ on functions $f : {\cal Y} \rightarrow \mathbbm{R}$
		is defined by 
		\begin{equation}
			({\cal L} f)(y) = \sum_{z \in {\cal Y}} \alpha(y, z) [f(z) - f(y)] \,,
			\quad y \in {\cal Y} \,,
		\end{equation}
		then ${\cal L}_A$ is the Markovian generator of a process $Y_A$ on $A$
		and acts on functions $f : A \rightarrow \mathbbm{R}$
		according to 
		\begin{equation}
			({\cal L}_A f)(y) = \sum_{z \in A} \alpha(y, z) [f(z) - f(y)] \,,
			\quad y \in A \,,
		\end{equation}
		$[{\cal L}]_A$ is the sub-Markovian generator obtained from the matrix representation of ${\cal L}$ 
		by deleting the row and columns with indices outside $A$ 
		and acts on functions $f : A \rightarrow \mathbbm{R}$
		according to 
		\begin{equation}
			([{\cal L}]_A f)(y) = -\sum_{z \not \in A} \alpha(y, z) f(y) + \sum_{z \in A} \alpha(y, z) [f(z) - f(y)] \,,
			\quad y \in A \,,
		\end{equation}
		and ${\cal L}^A$ is the Schur complement of $[{\cal L}]_{A^c}$ in ${\cal L}$,
		which defines another Markov process
		$Y^A$ on $A$ (see Appendix~\ref{SCHUR} for more details).
	\end{description}
\end{description}


\section{Background material}\label{Background}

\subsection{On loop-erased trajectories}\label{loops}
We introduce here a slightly more general setting than in Section \ref{setting}.
Let $Y$ be a Markov process on a finite state space $\mathcal{Y}$
with a generator ${\cal L}$ given by
\begin{equation}\label{MarkovExtendedGenerator}
(\mathcal{L}f)(y)=\sum_{z\in \cal Y}\alpha(y,z)[f(z)-f(y)],\quad y\in \mathcal{Y}, 
\end{equation}
with  $f:\mathcal{Y}\rightarrow \mathbb{R}$ arbitrary and $\{\alpha(y,z)\in [0, +\infty] :\: (y,z)\in \mathcal{Y}\times \mathcal{Y}\}$ 
a given collection of non-negative transition rates
(to avoid ambiguities, we assume that for each $y$ in ${\cal Y}$
there is at most one $z$ such that $\alpha(y, z) = + \infty$).
Let
\begin{equation}\label{VariableSpeed2}
\alpha(y)=\sum_{z\in \mathcal{Y}\setminus\{y\}}\alpha(y,z)\in[0,\infty].
\end{equation}
Let $B$ be a subset of $\mathcal{Y}$ such that
\begin{equation}\label{Bcemetery}
 \{y\in \mathcal Y :\: \alpha(y)=+\infty\}\subset B\subseteq \mathcal Y,
\end{equation}
so that, $\alpha(y)<\infty$ for any $y\in B^c =\mathcal{Y}\setminus B$, the complement of $B$ in $\mathcal{Y}$.
We assume that $B$ is accessible from any starting point of the process $Y$.

Denote by $\gamma_B=(y_0,\ldots,y_l)$ a self-avoiding path of $l+1$ points and length $l$ such that $y_i\in B^c$ for $i=0,\ldots,l-1$ and $y_l\in B$.
For $y_0\in B^c$, let ${P}_{y_0}$ the law of the random walk $Y$ when starting from $y_0$. 
Denote by $\Gamma_B$ a random trajectory obtained from $Y$ under ${P}_{y_0}$ as follows:
stop the walk $Y$ when it enters the set $B$ for the first time and erase all its loops. 
After this procedure, we are left with a self-avoiding trajectory of variable length.
In the next proposition we compute the probability that $\Gamma_B$ is the given trajectory $\gamma_B$.
To this end, we use a discrete skeleton of the Markov process $Y$ {\em absorbed in} $B$.
This justify the following definitions.

Set
\begin{equation}\label{ConstantSpeed}
\bar\alpha=\max_{y\in B^c}\alpha(y)<\infty,
\end{equation}
and let $\hat P$ be the $m \times m$ stochastic matrix, with $m=|Y|$, identified by the entries
\begin{equation}\label{rwtransdis}
\hat P(y,z)= \left\{
\begin{array}{cl}
\delta_{y,z} & \text{if } y \in B\\
\alpha(y,z)/\bar{\alpha} & \text{if } z\neq y \text{ and }y\in B^c,\\
1- \displaystyle\sum_{x\in \mathcal{Y}\setminus\{y\}}\hat P(y,x)  & \text{if } y=z \in B^c.
\end{array}
\right.
\end{equation}

Such a matrix $\hat P$ is a Markovian transition matrix for a discrete-time random walk~$\hat Y$ on~$\mathcal Y$.
In particular, for an arbitrary function $f$, by construction we have that
\begin{equation}\label{disVScont}
(\mathcal{L}f)(y)=(\bar{\alpha}(\hat{P}-\mathbbm{1})f)(y), \text{ for all } y\in B^c.
\end{equation}
We are in shape to prove the claimed proposition,
by using a nice independence argument 
we learned from Laurent Tournier.

\begin{proposition}\label{Marchal}
{\bf (Marchal~\cite{M00})}
Consider the random walk $Y$ on $\mathcal{Y}$ with generator $\mathcal{L}$ as in \eqref{MarkovExtendedGenerator}.
Fix distinct points $y_0$, \dots, $y_{l-1}$ in $B^c$ and $y_l \in B$,
so that $\gamma_B = (y_0, \dots, y_l)$ is a self-avoiding path
from $y_0$ to $B$. Then, under ${P}_{y_0}$,
\begin{equation}\label{loop-erasedLaw}
{P}_{y_0}\left(\Gamma_B=\gamma_B\right)=
\prod_{i=0}^{l-1}\alpha(y_{i},y_{i+1}) 
\frac{{\rm det}_{B^c\setminus \{y_0,\ldots,y_{l-1}\}}(-\mathcal{L})}{{\rm det}_{B^c}(-\mathcal{L})} \,,
\end{equation}
with the matrix notation according to Section \ref{Notation}.
\end{proposition}

\begin{proof}For the discrete chain $\hat{Y}$, let 
$\hat{T}^+_{y_{0}}$ and $\hat{T}_B$ be the first return time to $y_0$ and the hitting time of $B$, respectively.
More precisely, $\hat{T}^+_{y_0}=\inf\{k\geq 1:\hat{Y}_k=y_0\}$ and $\hat{T}_B=\inf\{k\geq 0:\hat{Y}_k\in B\}$.
Note that by definition of $\Gamma_B$ we have 
\begin{equation}\label{T1}
{P}_{y_0}\left(\Gamma_B=\gamma_B | \hat{T}^+_{y_0}< \hat{T}_B\right)
={P}_{y_0}\left(\Gamma_B=\gamma_B \right).
\end{equation}
As a consequence, we can write:
\begin{equation}\label{T2}
\begin{aligned}
{P}_{y_0}\left(\Gamma_B=\gamma_B \right)
&= {P}_{y_0}\left(\Gamma_B=\gamma_B | \hat{T}^+_{y_0}< \hat{T}_B\right)
{P}_{y_0}\left(\hat{T}^+_{y_0}< \hat{T}_B\right)\\
&\quad +{P}_{y_0}\left(\Gamma_B=\gamma_B, \hat{T}^+_{y_0}>\hat{T}_B\right)  \\ 
&= {P}_{y_0}\left(\Gamma_B=\gamma_B \right){P}_{y_0}\left(\hat{T}^+_{y_0}< \hat{T}_B\right) \\
&\quad +{P}_{y_0}\left(\Gamma_B=\gamma_B, \hat{T}^+_{y_0}>\hat{T}_B\right).
\end{aligned}
\end{equation}
It follows from \eqref{T2} that
\begin{equation}\label{T3}
{P}_{y_0}\left(\Gamma_B=\gamma_B \right)
=\frac{{P}_{y_0}\left(\Gamma_B=\gamma_B, \hat{T}^+_{y_0}>\hat{T}_B\right)}
{{P}_{y_0}\left(\hat{T}^+_{y_0}>\hat{T}_B\right)}.
\end{equation}

Denote by $\ell_{y_0}(\hat{T}_B)$ the local time $\hat{Y}$ spends at $y_0$ before entering $B$, and by 
$[\hat{P}]_{B^c}$ the matrix $\hat{P}$ restricted to $B^c$, then
\begin{equation}
\label{T4}
\begin{aligned}
\frac{1}{{P}_{y_0}\left(\hat{T}^+_{y_0}> \hat{T}_B\right)}&=
{E}_{y_0}[\ell_{y_0}(\hat{T}_B)]=\sum_{k\geq 0}[\hat{P}]^k_{B^c}(y_0,y_0)\\ &= ([\mathbbm{1}-\hat{P}]_{B^c})^{-1}(y_0,y_0)=
\frac{{\rm det}_{B^c\setminus \{y_{0}\}}(\mathbbm{1}-\hat{P})}{{\rm det}_{B^c}(\mathbbm{1}-\hat{P})},
\end{aligned}
\end{equation}
where the last equality follows by Cramer's formula for an inverse matrix.

On the other hand, for the numerator in the r.h.s. of equation
\eqref{T3}, we can write
\begin{equation}
\label{T5}
\begin{aligned}
& {P}_{y_0}\left(\Gamma_B=(y_0,\ldots,y_l), \hat{T}^+_{y_0}>\hat{T}_B\right)
  =\hat{P}(y_0,y_1){P}_{y_1}\left(\Gamma_{B\cup\{y_0\}}=(y_1,\ldots,y_l)\right).
\end{aligned}
\end{equation}

By plugging \eqref{T4} and \eqref{T5} into \eqref{T3}, and iterating, we have that  
\begin{equation}
\begin{aligned}
{P}_{y_0}\left(\Gamma_B=\gamma_B \right) 
= &\hat{P}(y_0,y_1){P}_{y_1}\left(\Gamma_{B\cup\{y_0\}}=(y_1,\ldots,y_l)\right)
\frac{{\rm det}_{B^c\setminus \{y_{0}\}}(\mathbbm{1}-\hat{P})}{{\rm det}_{B^c}(\mathbbm{1}-\hat{P})} \\
=& \cdots=\prod_{i=0}^{l-1}\hat {P}(y_{i},y_{i+1})\frac{{\rm det}_{B^c\setminus \{y_0,\ldots,y_{l-1}\}}(\mathbbm{1}-\hat{P})}{{\rm det}_{B^c}(\mathbbm{1}-\hat{P})}\\
=&\bar{\alpha}^{-l}\prod_{i=0}^{l-1}\alpha(y_{i},y_{i+1})\frac{{\rm det}_{B^c\setminus \{y_0,\ldots,y_{l-1}\}}(-\mathcal{L}/\bar{\alpha})}{{\rm det}_{B^c}(-\mathcal{L}/\bar{\alpha})}\\
=&\prod_{i=0}^{l-1}\alpha(y_{i},y_{i+1}) 
\frac{{\rm det}_{B^c\setminus \{y_0,\ldots,y_{l-1}\}}(-\mathcal{L})}{{\rm det}_{B^c}(-\mathcal{L})}.
\end{aligned}
\end{equation}
\end{proof}

\subsection{Wilson's algorithm}
We introduce here the algorithm due to Wilson and Propp~\cite{PW98} which allow us 
to sample the measure \eqref{ForestProb}.
First, we extend the Markov process $X$ defined through \eqref{MarkovGenerator} on $\mathcal X$ 
to a Markov process $\bar X$ on the space $\bar{\mathcal X}=\mathcal X \cup \{\Delta\}$ 
by interpreting $\Delta$ as an absorbing state and by adding some killing rates.
Consider the space $\mathcal{X}$, with $|\mathcal{X}|=n$.
Assume a collection of killing rates $\{q(x)\in[0,\infty] :\: x\in\mathcal{X}\}$ is given, and let $Q$ be the diagonal matrix
with diagonal entries $q(x)$, $x \in {\cal X}$.
Consider the Markov process $\bar X$ on the finite state space $\bar{\mathcal X}$ with generator given by
\begin{equation}\label{ExtendedMarkovGenerator}
(\mathcal Lf)(x)=\left\{
\begin{array}{cl}
(Lf)(x)+ q(x)[f(\Delta)-f(x)],&\mbox{ if } x\in \mathcal{X},\\ 
0,&\mbox{ if } x=\Delta.
\end{array}
\right.
\end{equation}
with  $f:\bar{\mathcal{X}}\rightarrow \mathbb{R}$ arbitrary, and $L$ defined in \eqref{MarkovGenerator}.
In particular, the matrix $-\mathcal{L}$ associated with the generator in \eqref{ExtendedMarkovGenerator} satisfies
\begin{equation}[-\mathcal{L}]_\mathcal{X}=Q-L.\end{equation}

Next, we describe the algorithm.
For any $A\subset \bar{\mathcal X}$, note that, due to irreducibility, $T_A=\inf\{t\geq 0: \bar{X}_t \in A\}$ is a.s. finite. 
\begin{definition}{\bf{(Wilson's algorithm)}}
\begin{enumerate}\label{Wilson}
\item Start the process $\bar{X}$ from any point $x_1\in\mathcal X$ until it reaches the absorbing state $\Delta$.
\item Erase all the loops of the trajectory described by $\bar{X}$ up to time $T_\Delta$. Call $\gamma_1(\Delta)$ this self-avoiding trajectory.
($\gamma_1(\Delta)$ is such that $\Delta$ is the last point in $\gamma_1(\Delta)$.)
\item If $\gamma_1(\Delta)$ covers the whole $\bar{\mathcal{X}}$ stop, 
else pick any point $x_2\in\mathcal{X}\setminus V(\gamma_1(\Delta))$,
with $V(\gamma_1(\Delta))$ denoting the set of points covered by $\gamma_1(\Delta)$. 
Start the process $\bar{X}$ from $x_2$ until it hits the set $V(\gamma_1(\Delta))$.
\item Erase all the loops of the trajectory described by $\bar{X}$ starting from $x_2$ up to time $T_{V(\gamma_1(\Delta))}$. 
Call $\gamma_2(\Delta)$ this self-avoiding trajectory.
\item If $\displaystyle\cup_{i=1,2}V(\gamma_i(\Delta))=\bar{\mathcal{X}}$ stop, 
else pick any point $x_3\in\mathcal X\setminus \displaystyle\cup_{i=1,2}V(\gamma_i(\Delta))$. 
Start the process $\bar{X}$ from $x_2$ until it hits the set $\displaystyle\cup_{i=1,2}V(\gamma_i(\Delta))$.
\item Iterate until $\bar{\mathcal{X}}$ is covered.
\end{enumerate}
\end{definition}

Denote by $\mathcal{T}_{\bar{\mathcal X}}$ the set of spanning oriented trees on $\bar{\mathcal X}$ rooted at $\Delta$.
This algorithm produces in finite time an element $\tau$ of $\mathcal{T}_{\bar{\mathcal X}}$.
As a corollary of Proposition \ref{Marchal}, we can easily compute the probability that the algorithm produces a given $\tau\in\mathcal{T}_{\bar{\mathcal X}}$.
\begin{corollary}\label{WilsonProb}
Fix a tree $\tau\in\mathcal{T}_{\bar{\mathcal X}}$.
Let $\partial{\rho(\tau)}=\{e \in \tau :\: e^+ = \Delta\}$
be the set of edges in $\tau$ that point to the root $\Delta$.
Denote by $\mathbb{P}({\cal W}=\tau)$
the probability that Wilson's algorithm produces the tree $\tau$.
Then 
\begin{equation}\label{WilProb}
\mathbb{P}({\cal W}=\tau)
=\frac{\left[
	\displaystyle\prod_{e\in\partial{\rho(\tau)} :\: e_- \not \in  S} q(e_-)
\right] \left[
	\displaystyle\prod_{e\in\tau\setminus\partial{\rho(\tau)}}w(e)
\right]} {{\rm det}_{{\cal X} \setminus S}(-\mathcal{L})}
\mathbbm{1}_{\{S \subset \partial_- \rho(\tau)\}} \,,
\end{equation}
with $\partial_- \rho(\tau)  = \cup_{e \in \partial \rho(\tau)} \{e_-\}$.
\end{corollary}

\begin{proof}
Recall the notation in Proposition \ref{Marchal}. Set $\mathcal Y=\bar{\mathcal X}=\mathcal X \cup \{\Delta\}$ and $Y=\bar{X}$.
Start with $B=\{\Delta\}\cup S$. By the definition of the algorithm, the proof follows by iterating the formula in equation \eqref{loop-erasedLaw} 
where at each iteration we set the right
$B$ according to the given tree $\tau$.
Whatever the choice of starting points in Wilson's algorithm we get the same result.
\end{proof}

We conclude this section by observing that there exists a natural bijection between $\mathcal F$ and $\mathcal{T}_{\bar{\mathcal X}}$.
Indeed, given $\phi\in\mathcal F$, let $\tau(\phi)$ be the unique element in $\mathcal{T}_{\bar{\mathcal X}}$ 
obtained from $\phi$ by adding all the edges connecting the roots in $\phi$ to $\Delta$, 
i.e. add all edges $e$ such that $e_-\in\rho(\phi)$ and $e^+=\Delta$.
Vice versa, given $\tau\in\mathcal{T}_{\bar{\mathcal X}}$, by removing all edges $e\in\partial{\rho(\tau)}$ 
we can identify a unique element $\phi\in\mathcal F$.
This simple observation together with Corollary \ref{WilsonProb} 
allow us to sample the measure in \eqref{ForestProb} using Wilson's algorithm. 

\begin{figure}[hbtp]
	\caption{
		\label{mimosa} Samples from $\nu_q$ for $q = .001$ on the two-dimensional $512 \times 512$ torus 
		with uniform rates equal to 1 between nearest neighbours for the first picture and, for the second picture, 
		with an additional northward drift, such that $w(x, y) = 1.2$ if $y$ is the northern
		nearest neighbour of $x$ and $w(x, y) = 1$ if $y$ is its eastern, western of southern nearest neighbour. 
		The third picture is a sample from $\nu_q$ for the same $q = .001$
		on the $987 \times 610$ rectangular grid and for Metropolis random walk in a Browian sheet potential $V$,
		i.e., nearest neighbour rates are given by $w(x, y) = \exp\{-\beta[V(y) - V(x)]_+\}$ with $\beta = .04$
		and $V$ is the restriction to the grid of a Brownian sheet with 0 value on the west and north sides of
		the box. In each picture different blue levels are given to points in different trees, cyan lines separate
		neighbouring trees, and the forest roots are at the centers of red diamonds.
	}
	\begin{center}
	\includegraphics[width=200pt]{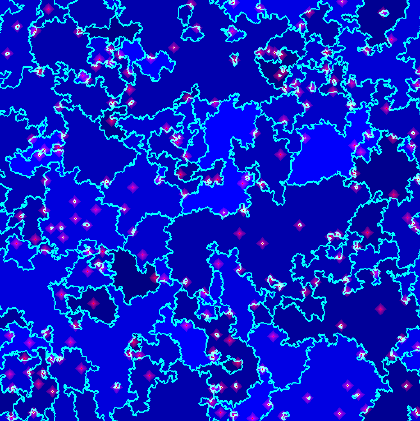}\hspace{18pt}\includegraphics[width=200pt]{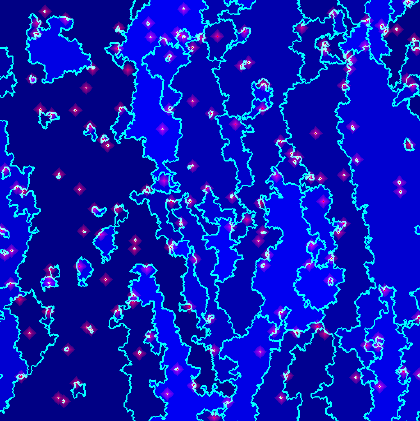}

	\vspace{18pt}
	\includegraphics[width=420pt]{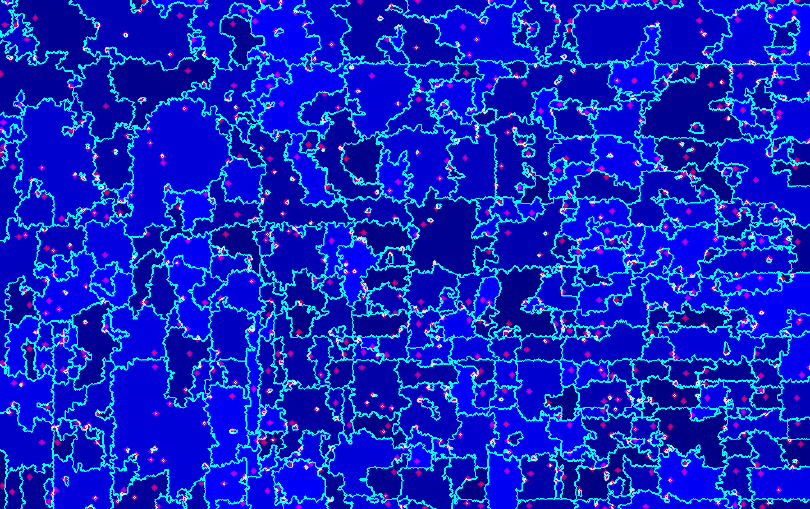}
	\end{center}
\end{figure}

\section{The root process}\label{ForRoots}

\subsection{Partition function and root number distribution}

We start here to analyze the measure introduced in \eqref{ForestProb} on the space $\mathcal{F}$ of spanning rooted oriented forests on $\mathcal X$.
We compute the partition function, we identify the distribution of the number of roots in the standard case
and we prove that the root process is a determinantal one.
\begin{theorem}\label{PartitionThm} 
{\bf (Partition function and spectrum)}
Assume a collection of killing rates $\{q(x)\in[0,\infty] :\: x\in\mathcal{X}\}$ is given, and let $Q$ be the diagonal matrix 
with diagonal entries $q(x)$, $x \in {\cal X}$.
Let $\nu_Q$ the probability measure on $\mathcal F$ in \eqref{ForestProb}.
Then \begin{equation}\label{PartitionFormula}
 Z_Q={\rm det}_{{\cal X} \setminus S}(Q-L).
\end{equation}
and, recalling the notation from Section \ref{Wilson},
\begin{equation}\label{samplingPhi}
	\nu_Q(\phi)=\mathbb{P}({\cal W}=\tau(\phi)).
\end{equation}
In the case $q(x)\equiv q>0$, we recover the standard probability measure in \eqref{StandardForestProb}, 
and the standard partition function in \eqref{StandardPartitionFn}
is given by the characteristic polynomial of $L$
\begin{equation}\label{StandardPartitionFormula}
 Z(q)=\det(q - L)=\prod_{i=0}^{n-1}(q+\lambda_i)=q\prod_{i=1}^{n-1}(q+\lambda_i)
\,,
\end{equation}
where the $\lambda_i$'s are the eigenvalues of $-L$ ordered by non-decreasing real part.
When $R \neq \emptyset$ and $q \geq 0$, 
the partition function is given by the characteristic polynomial
of the sub-Markovian generator of the process killed in $R$:
\begin{equation} \label{lucien}
	Z_R(q) = \det\left(q - [L]_{R^c}\right)
	\,.
\end{equation}
\end{theorem}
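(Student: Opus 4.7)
The plan is to derive every statement from Corollary~\ref{WilsonProb} applied through the bijection $\phi \leftrightarrow \tau(\phi)$ between $\mathcal{F}$ and $\mathcal{T}_{\bar{\mathcal{X}}}$ recalled at the end of Section~\ref{Wilson}. The first step is to match weights edge by edge. For $\phi \in \mathcal{F}$ with $S \subset \rho(\phi)$, the edges of $\tau(\phi)$ lying in $\tau(\phi)\setminus \partial\rho(\tau(\phi))$ are exactly the edges of $\phi$ whose target lies in $\mathcal{X}\setminus S$, contributing $w(e)$ to the numerator of \eqref{WilProb}; the edges in $\partial\rho(\tau(\phi))$ consist of the added edges $(x,\Delta)$ for $x\in\rho(\phi)$ together with any edge of $\phi$ targeting $S$, and their combined $q$-weights reproduce $\prod_{x\in\rho(\phi)\setminus S}q(x)$ after the infinite contributions coming from $x\in S$ are cancelled by the corresponding deleted rows and columns in $\det_{\mathcal{X}\setminus S}(-\mathcal{L})$. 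Using $[-\mathcal{L}]_{\mathcal X}=Q-L$, this shows
\[
\mathbb{P}(Wil=\tau(\phi)) \;=\; \frac{w_Q(\phi)}{\det_{\mathcal{X}\setminus S}(Q-L)}.
\]
Since $\mathbb{P}(Wil=\cdot)$ is a probability measure on $\mathcal{T}_{\bar{\mathcal X}}$ and the bijection is weight-preserving, summing over $\phi$ forces $Z_Q=\det_{\mathcal{X}\setminus S}(Q-L)$ and hence $\nu_Q(\phi)=\mathbb{P}(Wil=\tau(\phi))$, proving \eqref{PartitionFormula} and \eqref{samplingPhi} simultaneously.

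The standard case $q(x)\equiv q>0$ corresponds to $S=\emptyset$ and $Q=qI$, so \eqref{PartitionFormula} specialises to $Z(q)=\det_{\mathcal X}(qI-L)=\chi_L(q)$. Writing $\chi_L$ through the eigenvalues $\lambda_0,\dots,\lambda_{n-1}$ of $-L$ gives the claimed product. The extra factor $q$ is isolated because irreducibility of the chain $X$ makes $0$ a simple eigenvalue of $-L$: the constants span $\ker L$, and the existence of a unique invariant distribution shows the kernel of the transpose is one-dimensional as well.

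For $R\neq\emptyset$ and $q\geq 0$ one has $S=R$, and the restriction of $Q-L$ to $R^c$ coincides with $qI_{R^c}-[L]_{R^c}$: off-diagonal entries $-w(x,y)$ with $x,y\in R^c$ are unchanged by the restriction, and diagonal entries $q+w(x)=q-(L)_{x,x}$ for $x\in R^c$ are exactly those of $qI_{R^c}-[L]_{R^c}$. Hence \eqref{PartitionFormula} yields $Z_R(q)=\det_{R^c}(qI-L)=\det(qI_{R^c}-[L]_{R^c})=\chi_{[L]_{R^c}}(q)$, which is \eqref{lucien}.

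The main subtlety, and the single place where one must be careful, is the bookkeeping of the infinite rates attached to $S$ in the first step. The cleanest way to justify it is by regularisation: replace $q(x)=+\infty$ on $S$ by a finite value $M$, apply the finite-rate form of Corollary~\ref{WilsonProb} (where no $\infty$ appears anywhere), and let $M\uparrow\infty$. In this limit the forests with $S\not\subset\rho(\phi)$ are exponentially suppressed (matching the indicator in the definition of $w_Q$), and expanding the determinant in the $M$-dominated rows and columns of $Q-L$ indexed by $S$ produces a leading factor $M^{|S|}$ that cancels exactly the $M^{|S|}$ factor coming from the edges $(x,\Delta)$, $x\in S$, in the numerator of the Wilson formula. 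Once this cancellation is done carefully, the remaining expression is precisely the one written above, and all three assertions of the theorem follow.
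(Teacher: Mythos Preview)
Your approach is essentially the paper's own: use the bijection $\phi\leftrightarrow\tau(\phi)$, apply Corollary~\ref{WilsonProb}, and sum over $\mathcal F$ to extract both $Z_Q$ and \eqref{samplingPhi}. The paper's proof is in fact terser than yours and simply asserts the identity $\mathbb{P}(Wil=\tau(\phi))=w_Q(\phi)/\det_{\mathcal X\setminus S}(Q-L)$ without unpacking the $S\neq\emptyset$ case; you are more careful, and your regularisation argument in the final paragraph is a correct and clean way to handle it.

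One caution about your first paragraph, however. The direct edge-by-edge matching does not work as stated when $\phi$ contains an edge $e=(z,s)$ with $s\in S$. Taking Corollary~\ref{WilsonProb} literally, such an edge lies in $\partial\rho(\tau(\phi))$ and contributes $q(\underline e)=q(z)$ to the numerator, not $w(z,s)$; so the ``combined $q$-weights'' of $\partial\rho(\tau(\phi))$ do \emph{not} reduce to $\prod_{x\in\rho(\phi)\setminus S}q(x)$ --- there are spurious $q(z)$ factors and missing $w(z,s)$ factors. (This imprecision is already present in the paper's statement of Corollary~\ref{WilsonProb}; the correct contribution of each edge is the actual transition rate $\alpha(\underline e,\overline e)$, which equals $w(e)$ for edges into $S$ and $q(\underline e)$ only for edges into $\Delta$.) Your regularisation with finite $M$ sidesteps the issue entirely, since then $S=\emptyset$, $\partial\rho(\tau)$ consists solely of the added edges $(x,\Delta)$, and every edge of $\phi$ contributes its honest weight $w(e)$; the $M^{|S|}$ cancellation you describe is then exactly right. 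So the proof is correct as a whole, but the first paragraph should either be dropped or rewritten to reflect the true edge contributions.
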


\par\noindent {\bf Remark:}
This kind of results goes back to Kirchhoff~\cite{K847}.
Here, like in~\cite{CA02}, Theorem 3', we include the non-reversible case and stress the dependence
in $q$.

\begin{proof}
As observed in the previous section, for each forest $\phi\in \mathcal{F}$ there is a unique $\tau(\phi)\in \mathcal{T}_{\bar{\mathcal X}}$. 
By Corollary \ref{WilsonProb} we then have that
\begin{equation}
\mathbb{P}({\cal W}=\tau(\phi))=\frac{\left[\displaystyle\prod_{e\in\partial{\rho(\tau)} :\: e_- \not \in S }q(e_-)\right]
\left[\displaystyle\prod_{e\in\tau\setminus\partial{\rho(\tau)}}w(e)\right]}
{{\rm det}_{{\mathcal X}\setminus S}(-\mathcal{L})} 
\mathbbm{1}_{\{S \subset \partial_- \rho(\tau)\}}
= \frac{w_Q(\phi)}{{\rm det}_{{\cal X} \setminus S}(Q-L)} \,.
\end{equation}
By summing over all $\phi\in\mathcal F$ we immediately get \eqref{PartitionFormula}. In fact,
\begin{equation}
	Z_Q=\sum_{\phi\in\mathcal F}w_Q(\phi)={\rm det}_{{\cal X} \setminus {\cal S}}(Q-L)\sum_{\phi\in\mathcal F}\mathbb{P}({\cal W}=\tau(\phi))
	={\rm det}_{{\cal X} \setminus {\cal S}}(Q-L)
	\,.
\end{equation}
Moreover, if $q(x)\equiv q>0$, then $Q=q\mathbbm{1}$ and $Z(q)={\rm det}[q\mathbbm{1}-L]=\chi_L(q)$.
\end{proof}

In the standard case, an immediate consequence of Theorem \ref{PartitionThm} is a characterization 
of the law of the cardinality of the set of roots.
\begin{corollary}\label{Bernoullicity}
{\bf (Root number distribution)}
Assume the standard case $q(x)\equiv q>0$ and that $L$ has real spectrum. 
Let $N$ be a sum of $n$ independent Bernoulli random variables with parameters $q/(q+\lambda_i)$. 
The random variable $|\rho(\Phi_q)|$ counting 
the number of roots (or equivalently, of trees) in $\Phi_q$ has the same law as $N$. 
\end{corollary}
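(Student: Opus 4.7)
The plan is to compute the probability generating function of $|\rho(\Phi)|$ directly and recognize it as the PGF of a sum of independent Bernoullis.

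First I would observe that since $w_q(\phi) = q^{|\rho(\phi)|} w(\phi)$, for any $s \geq 0$ the mixed sum $\sum_{\phi \in \mathcal{F}} s^{|\rho(\phi)|} w_q(\phi) = \sum_{\phi \in \mathcal{F}} (sq)^{|\rho(\phi)|} w(\phi) = Z(sq)$. Dividing by $Z(q)$ gives
\[
	\mathbb{E}_q\!\left[s^{|\rho(\Phi)|}\right] = \frac{Z(sq)}{Z(q)}.
\]
Then I would plug in the factorization $Z(q) = \prod_{i=0}^{n-1}(q+\lambda_i)$ from Theorem \ref{PartitionThm} to obtain
\[
	\mathbb{E}_q\!\left[s^{|\rho(\Phi)|}\right] = \prod_{i=0}^{n-1} \frac{sq + \lambda_i}{q + \lambda_i} = \prod_{i=0}^{n-1} \left[\frac{\lambda_i}{q+\lambda_i} + s\,\frac{q}{q+\lambda_i}\right].
\]

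Next I would interpret each factor. Since $L$ is the generator of an irreducible Markov chain and is assumed to have real spectrum, the eigenvalues $\lambda_i$ of $-L$ are real and non-negative (with $\lambda_0 = 0$), so $p_i := q/(q+\lambda_i) \in (0,1]$ and $1 - p_i = \lambda_i/(q+\lambda_i)$. Thus each factor is exactly the probability generating function $(1-p_i) + s p_i$ of a Bernoulli random variable with parameter $p_i$. By independence the PGF of $N = \sum_{i=0}^{n-1} B_i$ is the product, so $\mathbb{E}_q[s^{|\rho(\Phi)|}] = \mathbb{E}[s^N]$ for all $s \geq 0$, which forces $|\rho(\Phi)|$ and $N$ to have the same law.

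I do not expect any real obstacle here, since everything follows from the partition function identity already proved. The only point that requires the real-spectrum hypothesis is the interpretation of the ratio $q/(q+\lambda_i)$ as an honest Bernoulli parameter: without reality one still obtains the same polynomial identity for the PGF, but the factorization no longer has a probabilistic meaning as a sum of independent Bernoullis. Note also that the factor corresponding to $\lambda_0 = 0$ contributes the deterministic value $1$, reflecting the fact that every spanning forest has at least one root.
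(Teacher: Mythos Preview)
Your proof is correct and rests on exactly the same identity the paper uses, namely $Z(q)=\prod_i(q+\lambda_i)$ from Theorem~\ref{PartitionThm}; the only difference is packaging, as you compare probability generating functions while the paper matches the coefficient of $q^k$ in $Z(q)$ directly to obtain ${\mathbb P}_q(|\rho(\Phi)|=k)$. Both arguments are equivalent reformulations of the same polynomial identity, and your PGF version is a clean way to present it.
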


\begin{proof}
Observe that the coefficient of degree $k$ in 
\begin{equation}
Z(q) = \sum_\phi q^{|\rho(\phi)|} w(\phi)
\end{equation}
is the total weight of the set of forest with exactly $k$ roots.
Since $Z(q) = \prod_i (q + \lambda_i)$ we get,
\begin{equation}
\begin{aligned}
{\mathbb P}\left(|\rho(\Phi_q)|= k \right)
= & \frac{1}{Z(q)}\sum_{\phi\in \mathcal{F} : |\rho(\phi)|= k} w_q(\phi)
= \left(\displaystyle\prod_{i=0}^{n-1}(q+\lambda_i)\right)^{-1} q^k
\sum_{I\in \mathcal{P}[n-k]}\prod_{i\in I}\lambda_i\\
&= \sum_{I\in \mathcal{P}[n-k]}\left[\prod_{i\in I}\left(\frac{\lambda_i}{q+\lambda_i}\right)\right]
\left[\prod_{j\notin I}\left(\frac{q}{q+\lambda_j}\right)\right]\\
&= \sum_{J\in \mathcal{P}[k]}
\left[\prod_{j\in J}\left(\frac{q}{q+\lambda_j}\right)\right]
\left[\prod_{i\notin J}\left(1-\frac{q}{q+\lambda_i}\right)\right],
\end{aligned}
\end{equation}
where $\mathcal{P}[k]$ stands for the set of all possible $k$ elements of the set $\{0,1,\ldots,n-1\}$.
\end{proof}

\par\noindent
{\bf Remark:} When the spectrum of $L$ does contain a non-real part,
one can still compute the law of $|\rho(\Phi)|$ and get the same algebraic
expressions in terms of the eigenvalues.
One can also compute momenta by differentiating with respect to $q$
the logarithm of the partition function. In particular, the mean value and the variance are given by
\begin{align}
&{\mathbb E}_q[\rho(\Phi)] = \sum_{i = 0}^{n - 1} \frac{q}{q + \lambda_i},	\label{swizzera}\\
&{\mathbb V}\!{\rm ar}_q(\rho(\Phi)) = \sum_{i = 0}^{n - 1} \frac{q}{q + \lambda_i} - \left(\frac{q}{q + \lambda_i}\right)^2.
\end{align}
We note however that the contribution of the imaginary part of the eigenvalues can make uneasy the comparison 
between variance and mean value, at least for small values of $q$.
This is the reason why, when dealing with the question of getting samples with a number of roots that approximates
a given $m \leq n$, we will restrict ourselves to the real spectrum case.

\subsection{Determinantal structure}

Next, we prove that the random set $\rho(\Phi_q)$, or more generally $\rho(\Phi_Q)$, is a determinantal process
as suggested after~\cite{HKPV06} by the previous result.
This is the content of Theorem \ref{DeterminantalRoots} for which we will present an algebraic and a probabilistic proof.

Let us first show a simple lemma.
Consider the Markov process $\bar X$ on $\bar {\mathcal {X}}=\mathcal X \cup \{\Delta\}$ defined via its generator in \eqref{ExtendedMarkovGenerator}.
This process can be coupled up to time $T_\Delta$
with a Markov process $X$ in ${\cal X}$
which is stopped at rate $q(x)$ in any $x \in {\cal X}$.
Calling $T_Q$ this stopping time,
$X(T_Q)$ is then the last point visited in ${\cal X}$
by the process $\bar{X}$ before time $T_{\Delta}$,
or, in other words, the end point of the last egde $e \in {\cal E}$ crossed inside ${\cal X}$, before going
directly to $\Delta$.

Let $K_Q$ be the Markovian matrix defined by 
\begin{equation}\label{ExtKernel}
	K_Q(x,y)={P}_x(X(T_Q) = y),  \quad \text{for } x,y\in\mathcal X
	\,.
\end{equation}
This transition kernel can also be expressed in terms of the Green's function $G_Q$:
\begin{lemma}\label{GreenKernel}
	For all $x$ and $y$ in ${\cal X}$
	\begin{equation} \label{ExtKernelGreen}
		K_Q(x,y) = \left\{
			\begin{array}{ll}
				G_{Q}(x,y)q(y)
				& \mbox{if $x, y \not \in S$,} \\
				P_x(X(T_Q) = X(T_S) = y)
				& \mbox{if $y \in S$,} \\
				{\mathbbm 1}_{\{x = y\}}
				& \mbox{if $x \in S$}
			\end{array}
		\right.
	\end{equation}
	with $S$ defined in~\eqref{gammino} and, for $x, y \not\in {\cal X}$,
	\begin{equation} \label{ExtendedGreenFn}
		G_{Q}(x,y)
		={E}_x\left[
			\ell_y(T_{Q})
		\right]
		= [Q-L]_{{\cal X} \setminus S}^{-1}(x,y)
	\end{equation}
	where $\ell_y(T_Q)$ is the local time up to $T_Q$.
\end{lemma}
\begin{proof}
If $x$ or $y$ belong to $S$,
then~\eqref{ExtKernelGreen} is just a restatement
of the definition of $K_Q$.
We then assume $x, y \not \in S$
and use the notation of Section \ref{Marchal} to work in discrete time.
Set $\mathcal Y=\bar{\mathcal X}$, $Y=\bar X$, $B=\{\Delta\} \cup S$ and write
\begin{equation}
\begin{aligned}
K_Q(x,y)&={P}_x({X}(T_{Q})=y)=\sum_{k\geq 1}{P}_x(\hat{Y}(k-1)=y,\hat{T}_{\Delta}=k)\\
&=\sum_{k\geq 1}{P}_x(\hat{Y}(k-1)=y,\hat{T}_{\Delta} > k-1)\hat{P}(y,\Delta)= {E}_x[\ell_y(\hat{T}_{\Delta})]\frac{q(y)}{\bar{\alpha}}
=G_{Q}(x,y)q(y).
\end{aligned}
\end{equation}
\end{proof}

Note that, 
when $R \neq \emptyset$ or $q > 0$,
we have
\begin{equation}\label{cappello}
K_{q, R}(x,y)={P}_x(X(T_q \wedge T_R)=y),  \quad \text{for } x,y\in\mathcal X,
\end{equation}
with $T_q$ being an independent exponential random variable of parameter $q$.
In particular, if $G_q(x,y)={E}_x[\ell_y(T_q)]$ is the Green's function up to time $T_q$,
then \begin{equation}
\label{KernelGreen}
K_q(x,y)=qG_q(x,y)=q(q\mathbbm{1}-L)^{-1}(x,y).
\end{equation}

\begin{theorem}\label{DeterminantalRoots}{\bf (Determinantal roots)}
The root process $\rho(\Phi_Q)$ is a determinantal process
with kernel $K_Q$.
Equivalently, for any $A\subset \mathcal{X}$:
\begin{equation}\label{nastro}
	{\mathbb P}\left(A\subset \rho\left(\Phi_Q\right)\right)={\rm det}_{A}(K_Q)
	\,.
\end{equation}
\end{theorem}

\begin{figure}[hbtp]
	\caption{
		\label{caffe`}
		On the two dimensional torus,
		the difference between the law of our root set
		and that of a product of Bernoulli measure is
		far too subtle to be detected on a single sample.
		Walking away from translation invariant models,
		we can find much more and huge correlations.
		Here are pictures of a sample on the two-dimensional
		torus with uniform rates equal to 1 and $q = .002$
		and of a sample associated with 
		the Metropolis random walk on the square grid
		in a Brownian sheet potential with inverse temperature
		$\beta = .16$ and extinction rate $q = 10^{-4}$. 
	}
	\begin{center}
		\includegraphics[width=200pt]{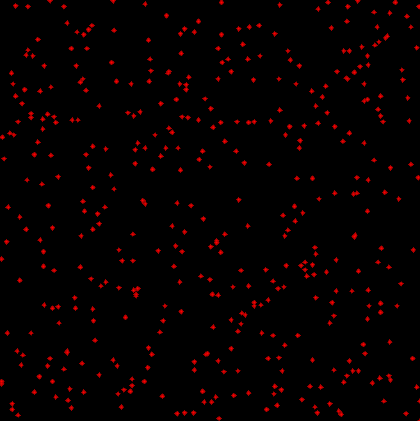}
		\hspace{18pt}
		\includegraphics[width=200pt]{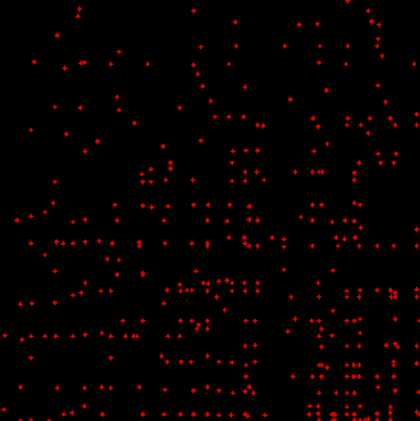}
	\end{center}
\end{figure}

\begin{proof}{\bf{(Algebraic proof of Thm \ref{DeterminantalRoots})}}
Assume first $S=\emptyset$. Consider a set $A\subset \mathcal{X}$ with $|A|=r$ of the form $A=\{x_1,x_2,\ldots,x_r\}$.
By choosing the different points in $A$ as starting point at each iteration in Wilson's algorithm
(remember that the law of the obtained tree does not depend
on the order of the starting points), 
by \eqref{loop-erasedLaw}, we get
\begin{equation}\label{A1}
\begin{aligned}
{\mathbb P}\left(\{x_1,x_2,\cdots,x_r\}\subset \rho\left(\Phi_Q\right)\right)&=
q(x_1)\frac{{\rm det}_{\{x_1\}^c}(Q-L)}{{\rm det}_{\mathcal{X}}(Q-L)}
q(x_2)\frac{{\rm det}_{\{x_1,x_2\}^c}(Q-L)}{{\rm det}_{\{x_1\}^c}(Q-L)}
\times\cdots \\
&\cdots\times q(x_r)\frac{{\rm det}_{A^c}(Q-L)}{{\rm det}_{\{x_1,\ldots,x_{r-1}\}^c}(Q-L)}=
\left[\prod_{i=1}^rq(x_i)\right]\frac{{\rm det}_{A^c}(Q-L)}{{\rm det}_{\mathcal{X}}(Q-L)}\\
&=\frac{{\rm det}_{A^c}(Q-L)}{{\rm det}_{\mathcal{X}}(Q-L)}\displaystyle{\rm det}_{A}(Q).
\end{aligned}
\end{equation}
In case $A=\mathcal X$, the claim is straightforward since equation \eqref{A1} reads
\begin{equation}
{\mathbb P}\left(\rho\left(\Phi_Q\right)=\mathcal{X}\right)= \frac{{\rm det}(Q)}{{\rm det}(Q-L)}
={\rm det}\left((Q-L)^{-1}\right){\rm det}(Q)={\rm det}\left((Q-L)^{-1}Q\right),
\end{equation}
and the r.h.s. equals ${\rm det}(K_Q)$ due to Lemma \ref{GreenKernel}.

In case $A\subsetneq\mathcal X$, we can use the Schur complement (see \eqref{detM-1} in Appendix~\ref{Schur}) to show that
\begin{equation}
\frac{{\rm det}_{A^c}(Q-L)}{{\rm det}_{\mathcal{X}}(Q-L)}={\rm det}_{A}\left((Q-L)^{-1}\right).
\end{equation}
Therefore, from equation \eqref{A1}, we have that
\begin{equation}
	{\mathbb P}_Q\left(\{x_1,x_2,\cdots,x_r\}\subset \rho\left(\Phi\right)\right)=
	{\rm det}_{A}((Q-L)^{-1}){\rm det}_{A}(Q)={\rm det}_{A}((Q-L)^{-1}Q)={\rm det}_{A}(K_Q)
	\,,
\end{equation}
where the second to the last equality is justified because $Q$ is a diagonal matrix and the last equality follows from \eqref{ExtKernelGreen}.

When $S \neq \emptyset$, the proof is the same.
We just have to subtract $S$ from the various considered sets
to get
\begin{equation}
	{\mathbb P}\left(
		A \subset \rho(\Phi_Q)
	\right) = {\rm det}_{A \setminus S}\left(
		K_Q
	\right)
	\,,
\end{equation}
which is equivalent to~\eqref{nastro}
since $K_Q$ is a bloc-diagonal matrix
(recall \eqref{ExtKernelGreen}).
\end{proof}

\begin{proof}{\bf{(Probabilistic proof of Thm \ref{DeterminantalRoots})}}
To avoid a heavy notation, we consider only the case $S=\emptyset$.
Starting from the Markov process $X$ and the killing rates $q(x)$, 
we construct two different absorbing states $\Delta_A$ and $\Delta_{A^c}$ accessible from the set $A$ and $A^c$, respectively.
Set $\mathcal Y=\mathcal{X}\cup \{\Delta_A,\Delta_{A^c}\}$.
Let $Y$ be the Markov process with generator 
\begin{equation}\label{2cemeteriesGen}
\begin{aligned}(\mathcal Lf)(x)&=
(Lf)(x)+ q(x)\mathbbm{1}_{\{x\in A\}}[f(\Delta_A)-f(x)]\\
&+q(x)\mathbbm{1}_{\{x\in A^c\}}[f(\Delta_{A^c})-f(x)], \quad \text{ if } x\in \mathcal{X}, 
\end{aligned}
\end{equation}
and $(\mathcal Lf)(x)=0$ if $x\in\{\Delta_A,\Delta_{A^c}\}$ 
with  $f:\mathcal{Y}\rightarrow \mathbb{R}$ and $L$ as in \eqref{MarkovGenerator}.

Next, consider the subspace $\bar{\bar{A}}=A\cup \{\Delta_A,\Delta_{A^c}\}\subset \mathcal{Y}$.
Let $Y^{\bar{\bar{A}}}$ be the Markov process with state space $\bar{\bar{A}}$ obtained as the trace of the process $Y$ on $\bar{\bar{A}}$.
Let us remark two features of Wilson's algorithm.
First, Wilson's algorithm can be extended to the case of a state space with
more than one absorbing state. 
In this case it produces a rooted spanning forest instead of a tree.
Second, Wilson's algorithm is uniquely determined once 
we fix a state space with some absorbing set
and a Markov generator.
These observations justify the following definitions.
Let $D$ be the set of ending points of the edges starting from $A$ after running Wilson
algorithm on $\bar{\cal X}$ with absorbing state $\Delta$ and generator ${\cal L}$.
Similarly let $D'$ associated in the same way with $A$, when Wilson's algorithm
is run on $\bar{\bar A}$ with absorbing set $\{\Delta_A, \Delta_{A^c}\}$
and generator ${\cal L}^{\bar{\bar A}}$.
Observe at this point that
\begin{equation}
	{\mathbb P}(A\subset \rho(\Phi_Q)) = {\mathbb P}(D = \{\Delta\}) = {\mathbb P}(D' = \{\Delta_A\}),
\end{equation}
and, by using Proposition \ref{Marchal}, compute
\begin{equation}\label{A2}\begin{aligned}
{\mathbb P}\left(A\subset \rho\left(\Phi_Q\right)\right)&
= {\mathbb P}(D' = \{\Delta_A\})
=\frac{
	\prod_{a \in A} q(a)
}{
	{\rm det}_{\bar{\bar{A}}\setminus \{\Delta_A,\Delta_{A^c}\}}(-\mathcal{L}^{\bar{\bar{A}}})
} \\
&= \frac{
	{\rm det}_{A}(Q)
}{
	{\rm det}_{A}(-\mathcal{L}^{\bar{\bar{A}}})
}
={\rm det}\left(\left(\left[-\mathcal{L}^{\bar{\bar{A}}}\right]_A\right)^{-1}\right){\rm det}_{A}(Q)\\
&=
{\rm det}_A\left(G^{\bar{\bar{A}}}_{\{\Delta_A,\Delta_{A^c}\}}\right){\rm det}_{A}(Q),
\end{aligned}
\end{equation}
where $G^{\bar{\bar{A}}}_{\{\Delta_A,\Delta_{A^c}\}}$ denotes the Green's function of the process 
$Y^{\bar{\bar{A}}}$ stopped when entering the absorbing states $\{\Delta_A,\Delta_{A^c}\}$.
Note now that for $x,y\in A$,
\begin{equation}\label{A3}
G^{\bar{\bar{A}}}_{\{\Delta_A,\Delta_{A^c}\}}(x,y)=G^{\mathcal{Y}}_{\{\Delta_A,\Delta_{A^c}\}}(x,y)=G^{\bar{\mathcal{X}}}_{\{\Delta\}}(x,y)
\end{equation}
with $\bar{\mathcal X}=\mathcal{X}\cup\{\Delta\}$ 
and $G^{\bar{\mathcal{X}}}_{\{\Delta\}}$ being the Green's function of the process $\bar{X}$ in \eqref{ExtendedMarkovGenerator}.
Finally, since $G^{\bar{\mathcal{X}}}_{\{\Delta\}}(x,y)=(Q-L)^{-1}(x,y)$ for $x,y\in\mathcal X$, the claim follows by combining equations
\eqref{A2} and \eqref{A3}.
\end{proof}

\subsection{Cumulants}\label{Cumulants}

In this section we compute the cumulants of the determinantal process $\rho(\Phi_q)$:
they are given by a nice formula it is worth to notice.
Let us associate with our random forests $\Phi_q$
with law $\nu_q$, the random variables
\begin{equation}
	\eta_x = {\mathbbm 1}_{\{x \in \rho(\Phi_q)\}}, \quad x \in {\cal X},
\end{equation}
note that they completely describe the root process.
For $A = \{x_1, \dots, x_k\} \subset {\cal X}$  with distinct $x_i$'s, 
the cumulants of these random variables are defined by
\begin{equation}
	\kappa_A(\eta) 
	=\kappa(\eta_{x_1}, \dots, \eta_{x_k})
	= \frac{\partial^k}{\partial \lambda_1 \dots \partial \lambda_k}
	\ln {\mathbb E}\left[
		\exp\left\{
			\sum_{i = 1}^k \lambda_i \eta_{x_i}
		\right\}
	\right] \Bigg|_{\lambda = 0}
	.
\end{equation}		
These quantities are the so-called truncated correlation functions,
that can also be recursively defined by
\begin{equation} \label{luce}
	{\mathbb E}\left[
		\prod_{x \in A} \eta_x 
	\right]
	= \sum_{\Pi \in {\cal P}_A} \prod_{B \in \Pi} \kappa_B(\eta),
\end{equation}
where ${\cal P}_A$ stands for the set of partitions of $A$.

The determinantal nature of the root process makes
its cumulants easy to compute.
With $A \subset {\cal X}$ and ${\cal S}_A$ being the permutation group on $A$,
one has
\begin{equation}
	{\mathbb E}\left[
		\prod_{x \in A} \eta_x
	\right]
	= {\mathbb P}\left(
		A \subset \rho(\Phi_q)
	\right) = {\rm det}_A(K_q) 
	= \sum_{\sigma \in {\cal S}_A} (-1)^{{\rm sgn}(\sigma)} \prod_{x \in A} K_q(x, \sigma(x)).
\end{equation}
Making a cycle decomposition of each permutation in this sum and denoting by ${\cal C}_B$ the set of long cycles on $B \subset A$,
i.e. the set of cycles of length $|B|$ in $B$, after some simple algebra, we get
\begin{equation}
	{\mathbb E}\left[
		\prod_{x \in A} \eta_x
	\right]
	= \sum_{\Pi \in {\cal P}_A} \prod_{B \in \Pi} \sum_{\sigma \in {\cal C}_B} (-1)^{|B| - 1} \prod_{x \in B} K_q(x, \sigma(x)).
\end{equation}
This identifies our cumulants through (\ref{luce}) and gives the following lemma.
\begin{lemma}\label{nebbioline}
For all $A \subset {\cal X}$
\begin{equation}
	\kappa_A(\eta) = (-1)^{|A| - 1} \sum_{\sigma \in {\cal C}_A} \prod_{x \in A} P_x(X(T_q) = \sigma(x)),
\end{equation}
where ${\cal C}_A$ stands for the set of cycles of length $|A|$ in $A$.
\end{lemma}

\par\noindent
{\bf Remark:} 
In the case of uniformly equal weights between nearest neighbours, for large $q$,
$(-1)^{|A| - 1} \kappa_A(\eta)$ behaves like 
the natural low temperature partition function associated with an embedded travelling salesman problem.
In this regime, on the one hand, Wilson's algorithm quickly provides perfect samples
of the root process and, on the other hand, the cumulant is the expected value of some observable for the system 
made of $n$ independent copies of $\rho(\Phi_q)$~\cite{S75}. This suggests that one could find
a practical way to estimate this low temperature partition function and then solve the travelling salesman
problem. Unfortunately, the corresponding observable has an exponentially small probability to
be different from $0$ and consequently, it is in reality impossible to estimate its mean in this way.

\section{Hitting times}\label{HittingTimes}

In this section we answer the question raised in the introductory Section \ref{chessboard}.
To this end, we focus on hitting times of a given subset $R\subset \mathcal{X}$, i.e., 
\begin{equation}
	T_R = \inf\{t \geq 0: X(t) \in R\}
	\,.
\end{equation}
We will also look at the return time
\begin{equation} \label{stella}
	T_R^{++} = \inf\left\{
		t \geq 0 :\:
		\exists s > 0,
		X(s) \neq X(0),
		s \leq t,
		X(t) \in R
	\right\}
	\,.
\end{equation}
The reason why we use this heavy double~$^+$ notation
is that we will also consider the often more useful
{\em randomized} or {\em skeleton return time} $T_R^+$,
which is defined as follows.
Assume that $X$ is built from a discrete time skeleton $\hat X$
such that $X$ is updated according to the successive positions of $\hat X$
after independent exponential times of parameter
\begin{equation} \label{tizio}
	\bar w = \max_{x \in {\cal X}} w(x) = \max_{x \in {\cal X}} \sum_{y \neq x} w(x, y) < + \infty 
	\,,
\end{equation}
so that, with $\hat P$ the transition matrix of $\hat X$,
\begin{equation} \label{caio}
	L = \bar w (\hat P - \mathbbm{1}_{{\cal X}})
	\,.
\end{equation}
The randomized or skeleton return time $T_R^+$ is
\begin{equation} \label{sempronio}
	T_R^+ = \inf\left\{
		t \geq \tau_1 :\:
		X(t) \in R
	\right\}
\end{equation}
with $\tau_1$ the first updating time
(which is an exponential time of parameter $\bar w$).
One always has $T_R \leq T_R^+ \leq T_R^{++}$
and, for $x \in R$, it holds
\begin{equation}
	E_x[T_R^{++}]
	= E_x[T_R^+] + P_x(X(\tau_1) = x) E_x[T_R^{++}]
	\,,
\end{equation}
so that 
\begin{equation} \label{plastica}
	E_x[T_R^{++}]
	= \frac{\bar w}{w(x)} E_x[T_R^+]
	\,,
\end{equation}
with $w(x)$ as in~\eqref{VariableSpeed}.
 
In Appendix~\ref{Wentzell}, Lemma \ref{WentzellThm}, we prove,
with the help of Wilson's algorithm
and elementary Green's function computations,
a formula for $E_x[T_R]$, which is originally due to Freidlin and Wentzell
(Lemma 3.3 in~\cite{FW98}).
We will use this formula to compute the mean value of $E_x[T_R]$
when $R$ is the set of roots sampled from either $\nu_Q$
or ${\mathbb P}\left(\cdot \, \big| |\rho(\Phi_q)| = m\right)$ for any given~$m$.

\begin{theorem}\label{HittingRootsThm}{\bf (Hitting-time formulas)}
For any $x\in \mathcal{X}$
\begin{equation}\label{GeneralHittingRoots}
{\mathbb E}\left[E_x\left[T_{\rho(\Phi_Q)}\right]\right]
=\displaystyle\sum_{y\in\mathcal{X}}\frac{1}{q(y)}\left[{\mathbb P}\left(\rho\left(\tau_x(\Phi_Q)\right)=\{y\}\right)-{\mathbb P}(\rho(\Phi_Q)=\{y\})\right],
\end{equation}
with $\tau_x(\Phi_Q)$ being the unique tree in $\Phi_Q$ containing $x$.
In the standard case, $q(x)\equiv q>0$, equation \eqref{GeneralHittingRoots} reduces to 
\begin{equation}\label{HittingRoots}{\mathbb E}\left[E_x\left[T_{\rho(\Phi_q)}\right]\right]
=\frac{1}{q}\left[1-{\mathbb P}\left(|\rho(\Phi_q)|=1\right)\right]=\frac{1}{q}\left(1-\prod_{i = 1}^{n - 1} \frac{\lambda_i}{q + \lambda_i}\right).
\end{equation}
Moreover, for $m<n$ and any $x\in \mathcal{X}$, 
\begin{equation}\label{ConditionedHittingRoots}
{\mathbb E}\left[E_x\left[T_{\rho(\Phi_q)}\right] \big| |\rho(\Phi_q)|=m\right]
=\frac{{\mathbb P}(|\rho(\Phi_q)|=m+1)}{q{\mathbb P}(|\rho(\Phi_q)|=m)}
= \frac{a_{m + 1}}{a_m}
\,,
\end{equation}
with $a_k$ the coefficient of degree $k$ 
in the characteristic polynomial
\begin{equation}
	\det(q - L) = a_1 q + \cdots + q^n
	\,.
\end{equation}
\end{theorem}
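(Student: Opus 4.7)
The plan is to combine Lemma~\ref{WentzellThm} (the Freidlin--Wentzell mean hitting-time formula) with the explicit form of $\nu_Q$. Lemma~\ref{WentzellThm} provides, for every non-empty $R\subset\mathcal{X}$, an identity of the form
\[
E_x[T_R]\sum_{\phi:\,\rho(\phi)=R}w(\phi)
=\sum_{y\notin R}\;\sum_{\substack{\phi':\,\rho(\phi')=R\cup\{y\}\\ x\in\tau_{\phi'}(y)}}w(\phi').
\]
Multiplying both sides by $\prod_{z\in R\setminus S}q(z)\mathbbm{1}_{\{S\subset R\}}/Z_Q$ and summing over $R$, the factor $\sum_{\phi:\,\rho(\phi)=R}w(\phi)$ on the left cancels, so the task reduces to re-indexing the double sum that survives on the right.

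The key re-indexing trades the pair $(R,y)$ for the single larger forest $\phi'$: the constraint $x\in\tau_{\phi'}(y)$ forces $y=\rho(\tau_{\phi'}(x))$, so each $\phi'$ with $|\rho(\phi')|\ge 2$, $S\subset\rho(\phi')$ and $\rho(\tau_{\phi'}(x))\notin S$ is counted exactly once, with $R=\rho(\phi')\setminus\{y\}$. Inserting the factor $q(y)/q(y)$, finite because $y\notin S$, reconstructs the weight $w_Q(\phi')$ and yields
\[
\mathbb{E}_Q\bigl[E_x[T_{\rho(\Phi)}]\bigr]
=\mathbb{E}_Q\!\left[\frac{\mathbbm{1}_{\{\rho(\tau_\Phi(x))\notin S,\,|\rho(\Phi)|\ge 2\}}}{q\bigl(\rho(\tau_\Phi(x))\bigr)}\right].
\]
Formula~\eqref{GeneralHittingRoots} then follows from the pointwise identity
\[
\mathbb{P}_Q\bigl(\rho(\tau_\Phi(x))=\{y\}\bigr)-\mathbb{P}_Q\bigl(\rho(\Phi)=\{y\}\bigr)
=\mathbb{P}_Q\bigl(\rho(\tau_\Phi(x))=\{y\},\,|\rho(\Phi)|\ge 2\bigr),
\]
which holds because $\rho(\Phi)=\{y\}$ forces $\rho(\tau_\Phi(x))=\{y\}$, combined with the convention $1/q(y)=0$ for $y\in S$ that kills the $y\in S$ terms automatically.

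To derive \eqref{HittingRoots}, specialize to $q(x)\equiv q$ and factor $1/q$ out: the first sum equals $1$ since $x$ lies in exactly one tree of $\Phi$, the second equals $\mathbb{P}_q(|\rho(\Phi)|=1)$, and Corollary~\ref{Bernoullicity} (with $\lambda_0=0$, so that the Bernoulli of parameter $q/(q+\lambda_0)=1$ is deterministic) evaluates $\mathbb{P}_q(|\rho(\Phi)|=1)=\prod_{i=1}^{n-1}\lambda_i/(q+\lambda_i)$. For the conditional formula \eqref{ConditionedHittingRoots}, the same re-indexing applies but restricted to $|\rho(\phi)|=m$; this forces $|\rho(\phi')|=m+1$ and the sum evaluates to
\[
\mathbb{E}_q\bigl[E_x[T_{\rho(\Phi)}]\,\mathbbm{1}_{\{|\rho(\Phi)|=m\}}\bigr]=\frac{q^m}{Z(q)}\sum_{\phi':\,|\rho(\phi')|=m+1}w(\phi')=\frac{\mathbb{P}_q(|\rho(\Phi)|=m+1)}{q},
\]
from which \eqref{ConditionedHittingRoots} follows upon dividing by $\mathbb{P}_q(|\rho(\Phi)|=m)$.

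The main subtlety lies in the interaction between the requirement $R\ne\emptyset$ in Lemma~\ref{WentzellThm} and the re-indexing step: single-root forests $\phi$ do contribute to the left-hand side through $E_x[T_{\rho(\phi)}]$, but the corresponding larger forests $\phi'$ always satisfy $|\rho(\phi')|\ge 2$, which is exactly what the indicator $\mathbbm{1}_{\{|\rho(\Phi)|\ge 2\}}$ enforces on the right, and this is the structural reason why the correction $-\mathbb{P}_Q(\rho(\Phi)=\{y\})$ appears in \eqref{GeneralHittingRoots}. A secondary point of care is that, for $S\ne\emptyset$, the factor $1/q(y)$ must never be applied to $y\in S$, but this is automatic since every $\phi'$ entering the sum satisfies $\rho(\tau_{\phi'}(x))\notin S$ by construction.
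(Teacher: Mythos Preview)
Your proof is correct and follows essentially the same route as the paper: both arguments start from the Freidlin--Wentzell identity $E_x[T_R]\,Z_R(0)=\sum_{y\notin R}\sum_{\phi'}w(\phi')$, weight by $\prod_{z\in R\setminus S}q(z)\mathbbm{1}_{\{S\subset R\}}/Z_Q$, sum over $R$, and re-index by trading the pair $(R,y)$ for the enlarged forest $\phi'$. Your intermediate expression $\mathbb{E}_Q\bigl[\mathbbm{1}_{\{\rho(\tau_\Phi(x))\notin S,\,|\rho(\Phi)|\ge 2\}}/q(\rho(\tau_\Phi(x)))\bigr]$ and the accompanying pointwise identity make the bookkeeping slightly more transparent than the paper's version, but the underlying computation is identical.
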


\begin{proof}
Observe first that, for any $R\subset \mathcal{X}$,
\begin{equation}\label{RootSet}
{\mathbb P}(\rho(\Phi_Q)=R)=\frac{\sum_{\phi:\rho(\phi)=R}w_Q(\phi)}{Z_Q}= \frac{Z_R(0)}{Z_Q}{\rm det}_{R\setminus S}(Q),
\end{equation}
with $Z_R(0)$ as in \eqref{Rmeasures}.
By using equation \eqref{RootSet} together with \eqref{WentzellFormula}, we have that
\begin{equation}
\begin{aligned}
{\mathbb E}[E_x[T_{\rho(\Phi_Q)}]]&= \displaystyle\sum_{R \neq \emptyset}{\mathbb P}(\rho(\Phi_Q)=R)E_x[T_R]\\
&=\displaystyle\sum_{R \neq \emptyset :\: R \supset S}\frac{{\rm det}_{R\setminus S}(Q)}{Z_Q}
\displaystyle\sum_{y\notin R}\sum_{
	\stackrel{\scriptstyle\phi:\rho(\tau_x(\phi))=\{y\},}
		{\scriptstyle\rho(\phi)=R\cup\{y\}}
}w(\phi)\\
&=\frac{1}{Z_Q}
\displaystyle\sum_{y\notin S}
\displaystyle\sum_{
	\stackrel{\scriptstyle R \neq \emptyset :\: R \supset S}
		{ R\not\ni y}
}
\displaystyle\sum_{
	\stackrel{\scriptstyle\phi:\rho(\tau_x(\phi))=\{y\},}
		{\scriptstyle\rho(\phi)=R\cup\{y\}}
}\frac{w_Q(\phi)}{q(y)}\\
&=\frac{1}{Z_Q}\displaystyle\sum_{y\notin S}\left[\displaystyle\sum_{\phi:\rho(\tau_x(\phi))=\{y\}}\frac{w_Q(\phi)}{q(y)}
-\sum_{\phi:\rho(\phi)=\{y\}}\frac{w_Q(\phi)}{q(y)}\right]\\
&=\displaystyle\sum_{y\notin S}\frac{1}{q(y)}\left[{\mathbb P}(\rho(\tau_x(\phi_Q))=\{y\})-{\mathbb P}(\rho(\Phi_Q)=\{y\})\right].
\end{aligned}
\end{equation}
The restriction of summing over $y\notin S$ can be removed, since $1/q(y)=0$ for $y\in S$. Hence \eqref{GeneralHittingRoots} holds and 
\eqref{HittingRoots} readily follows when $q(x)\equiv q>0$.
The proof of \eqref{ConditionedHittingRoots} follows by an analogous computation.
\end{proof}

Note that the r.h.s. of \eqref{HittingRoots} and \eqref{ConditionedHittingRoots} is independent of the starting point $x$.
This latter observation allows to answer the questions in the introduction.
In fact, no matter the geometry of the graph and the weights we are considering, 
we can take the random subset $R$ of ${\cal X}$ which is given
by the root set of a random forest with law ${\mathbb P}\left(\cdot \big||\rho(\Phi_q)|=m\right)$,
and the formula in equation \eqref{ConditionedHittingRoots} says that the hitting times do not dependent on the starting point $x$.

To practically obtain a sample from $\nu_q$ with approximately $m \leq n$ roots
when $L$ has only real eigenvalues, one can use Wilson's algorithm and play with the parameter $q$ as follows.
If $q$ is such that
\begin{equation}\label{albicocca}
\sum_{i = 0}^{n - 1} \frac{q}{q + \lambda_i} = m,
\end{equation}
one has an expected number of $m$ roots with fluctuations
of order $\sqrt m$ or smaller (see Corollary~\ref{Bernoullicity}).
In principle one should compute the eigenvalues of $L$, which is in general difficult for large $n$,
and then solve equation (\ref{albicocca}) in $q$.
To overcome this obstacle, a possible alternative procedure is the following:
\begin{enumerate}
\item Start with any positive $q$ and run Wilson's algorithm with this parameter
	to get a sample from $\nu_q$ with a certain number $r$ of roots.
\item Replace $q$ by $q * m / r$ and run again Wilson's algorithm with this
	new parameter to get a new sample with another number of roots,
	say $r$ again.
\item Iterate the previous step until a sample with $r$ roots satisfying
	$m - 2 \sqrt m \leq r \leq m + 2 \sqrt m$ is obtained.
\end{enumerate}
As a matter of fact, $q \mapsto qm / \sum_i q / (q + \lambda_i)$
rapidly converges to the solution of (\ref{albicocca}),
hence the algorithm rapidly reaches an end.
Since we believe this procedure to be quite far from an optimal one,
we are only sketchy on this point.
Jus to give some example, we sampled in this way approximatively 100000, 10000, 1000 and 100 roots
on the $512 \times 512$ grid for the random walk in a Brownian sheet
potential with inverse temperature $\beta = .04$.
We obtained 100443, 10032, 1042 and 111 roots in 8, 6, 6 and 8 iterations, respectively.

In equation~(\ref{HittingRoots}) and~(\ref{ConditionedHittingRoots})
the starting point $x$ is given and does not depend on the root set $\rho(\Phi_q)$.
The next two propositions deal with the mean value of
$E_X\bigl[T^+_{\rho(\Phi_q)}\bigr]$ (recall~(\ref{stella})-(\ref{VariableSpeed}))
when $X$ is uniformly chosen in $\rho(\Phi_q)$
and with the mean value of $\max_{x \in {\cal X}} E_x\left[T_{\rho(\Phi_q)}\right]$.

\begin{proposition}\label{edera}
{\bf{(Return-time in $\rho(\Phi)$)}}
For all $x$ in ${\cal X}$, all $q > 0$ and all positive $m \leq n$, it holds
\begin{equation}\label{return}
	{\mathbb E}\left[
		\mathbbm{1}_{\{x \in \rho(\Phi_q)\}}
		E_x\left[
			T^+_{\rho(\Phi_q)}
		\right]
	\right]
	= {\mathbb E}\left[
		\mathbbm{1}_{\{x \in \rho(\Phi_q)\}}
			E_x\left[
				T^+_{\rho(\Phi_q)}
		\right]
		\Big|\, |\rho(\Phi_q)| = m
	\right]
	= \frac{1}{\bar{w}}
\end{equation}
and
\begin{equation} \label{ciuffo}
	{\mathbb E}\left[
		\mathbbm{1}_{\{x \in \rho(\Phi_q)\}}
		E_x\left[
			T^{++}_{\rho(\Phi_q)}
		\right]
	\right]
	= {\mathbb E}\left[
		\mathbbm{1}_{\{x \in \rho(\Phi_q)\}}
			E_x\left[
				T^{++}_{\rho(\Phi_q)}
		\right]
		\Big|\, |\rho(\Phi_q)| = m
	\right]
	= \frac{1}{w(x)}
	\,.
\end{equation}

In particular,
\begin{equation} \label{returnFromUniform}
	{\mathbb E}\left[
		E_{U(\rho(\Phi_q))}\left[
			T^+_{\rho(\Phi_q)}
		\right] \Big|\,
		|\rho(\Phi_q)| = m
	\right]
	= \frac{1}{\bar{w}} \frac{n}{m}
\end{equation} 
and
\begin{equation} \label{chicca}
	{\mathbb E}\left[
		E_{U(\rho(\Phi_q))}\left[
			T^{++}_{\rho(\Phi_q)}
		\right] \Big|\,
		|\rho(\Phi_q)| = m
	\right]
	= \left(\frac{1}{n} \sum_{x \in {\cal X}} \frac{1}{w(x)}\right) \frac{n}{m}
	\,,
\end{equation} 
where $U(\rho(\Phi_q))$ stands for a random point uniformly distributed in $\rho(\Phi_q)$.
\end{proposition}

\begin{proof}
Along this proof, for an arbitrary subset $R\subset\mathcal{X}$,
we write $\nu(R)$ for either ${\mathbb P}(\rho(\Phi_q)=R)$
or ${\mathbb P}\left(\rho(\Phi_q) = R \,\Big|\, |\rho(\Phi_q)| = m\right)$
and we set $g_{R}(x)=E_x\bigl[\hat{T}_{R}\bigr]= \bar{w}E_x\bigl[T_{R}\bigr]$
for any $x$ in ${\cal X}$.
If $x \in R$, then $g_R(x) = 0$, while, when $x \not \in R$, 
\begin{equation}\label{1}
g_{R}(x)=\sum_{y\in\mathcal{X}}p(x,y)E_x\left[\hat{T}_{R} \big|\, \hat{X}(1) = y\right]=\sum_{y\in\mathcal{X}}p(x,y)(1+g_R(y))=1+(Pg_{R})(x)
\,.
\end{equation} 
Setting $h_{R}(x)={\mathbbm 1}_{\{x \in R\}} E_x\bigl[\hat{T}^+_{R}\bigr]$,
we also have, when $x \in R$,
\begin{equation}\label{2}
h_R(x) = 1+(Pg_{R})(x)
\,.
\end{equation} 
By defining $g(x)=\sum_{R\subseteq\mathcal{X}}\nu(R)g_{R}(x)$ for all $x$ in {\cal X}, we then get 
\begin{equation}\label{3}
\begin{aligned}
(Pg)(x) &=\sum_{R\subseteq\mathcal{X}}\nu(R)(Pg_{R})(x)=
\sum_{R\not\ni x}\nu(R)[g_{R}(x)-1]+\sum_{R\ni x}\nu(R)[h_{R}(x)-1]\\
&= -\sum_{R}\nu(R) + \sum_{R\not\ni x}\nu(R)g_{R}(x)+\sum_{R\ni x}\nu(R)h_{R}(x)\\
&= -1 + g(x) +\sum_{R\ni x}\nu(R)h_{R}(x)
\,,
\end{aligned}
\end{equation} 
where in the last equality we have used that $g_{R}(x)=0$ if $x\in R$, and that $\sum_{R}\nu(R)=1$ since $\nu$ is a probability measure on subsets of $\mathcal{X}$.
Now Theorem \ref{HittingRootsThm} says that
the function $g$ is constant on $\mathcal{X}$, therefore harmonic on $\mathcal{X}$. 
Equivalently, $(Pg)(x)=g(x)$ for any $x\in\mathcal{X}$, which together with 
\eqref{3} implies that 
\begin{equation}\label{4}
\sum_{R\ni x}\nu(R)h_{R}(x)=1. 
\end{equation} 
By recalling the definition of $h_R$ and passing in continuous time, we see that equation \eqref{4} is equivalent to \eqref{return}.  
Equation~\eqref{ciuffo} is then a consequence of~\eqref{plastica}.

Then, the claim in \eqref{returnFromUniform} readily follows by writing 
\begin{equation}
	{\mathbb E}\left[
		E_{U(\rho(\Phi_q))}\left[
			T^+_{\rho(\Phi_q)}
		\right]
		\Big|\, |\rho(\Phi_q)| = m
	\right]
	= {\mathbb E}\left[
		\sum_{x\in\mathcal{X}} \frac{\mathbbm{1}_{\{x\in\rho(\Phi_q)\}}}{m}
		E_{x}\left[
			T^+_{\rho(\Phi_q)}
		\right]
		\bigg|\, |\rho(\Phi_q)| = m
	\right]
	\,, 
\end{equation}
and plugging \eqref{return} in the r.h.s above.
Equation~\eqref{chicca} follows in the same way from~\eqref{ciuffo}.
\end{proof}

\begin{proposition}\label{blu}
{\bf{(Largest expected hitting-time estimates)}}
For all $q > 0$ it holds
\begin{equation}\label{MaxEstimate}
{\mathbb E}\left[\max_{x\in \mathcal{X}}E_x\left[T_{\rho(\Phi_q)}\right]\right]
\leq \frac{1}{q}\left[{\mathbb E}\left[|\rho(\Phi_q)|\right]-{\mathbb P}(|\rho(\Phi_q)|=1)\right]
= \frac{1}{q}\left(\sum_{i = 0}^{n - 1} \frac{q}{q + \lambda_i} - \prod_{i = 1}^{n - 1} \frac{\lambda_i}{q + \lambda_i}\right).
\end{equation}
Furthermore, for any positive $m\leq n$, it holds (recall the notation of Theorem~\ref{HittingRootsThm})
\begin{equation}\label{MaxEstimateFixedRoots}
{\mathbb E}\left[\max_{x\in \mathcal{X}}E_x\left[T_{\rho(\Phi_q)}\right]\Big| \, |\rho(\Phi_q)|=m\right]\leq 
\frac{
	(m+1){\mathbb P} \left(
		|\rho(\Phi_q)| = m + 1
	\right)
}{
	q {\mathbb P} \left(
		|\rho(\Phi_q)| = m 
	\right)
}	
= (m + 1) \frac{a_{m + 1}}{a_m}
\,.
\end{equation}
\end{proposition}
\begin{proof}
As we remarked before, Wilson's algorithm works also when considering more than one absorbing state.
Denote by $T^W_R$ the running time of Wilson's algorithm 
(i.e. the total running time of the loop erased random walks needed to cover the whole graph)
when the absorbing states form a non-empty subset $R$ of $\mathcal{X}$ (this amounts to sample $\nu_{0, R}$).
It can be shown (see e.g.~\cite{M00}, Proposition 1)
that the mean running time\footnote{
	This running time is actually independent of the obtained sample and its law is the same
	as that of a sum of independent exponential variables with parameters $\lambda_{i, R}$
	when these eigenvalues are real.
	The same holds in the case of complex eigenvalues by defining ``the sum of exponential variables''
	through its Laplace transform and the same algebraic formula as in the real case.
} can be expressed in spectral terms as $\sum_{i=0}^{|R|-1}\frac{1}{\lambda_{i,R}}$,
with $\lambda_{i,R}$ being the eigenvalues of the operator $[L]_{R^c}$,
the sub-Markovian generator associated with the process absorbed in $R$.
Note at this point that we can overestimate the l.h.s. of \eqref{MaxEstimate} by the expectation of $T^{W}_{\rho(\Phi)}$.
Hence, using (\ref{lucien}) and looking at the coefficient of degree 1 in $Z_R(q)$,
\begin{equation}
\begin{aligned}
{\mathbb E}\left[\max_{x\in \mathcal{X}}E_x\left[T_{\rho(\Phi_q)}\right]\right]
&\leq {\mathbb E}\left[T^{W}_{\rho(\Phi_q)}\right]= {\mathbb E}\left[\sum_{i=0}^{|\rho(\Phi_q)|-1}\frac{1}{\lambda_{i,\rho(\Phi_q)}}\right]
=\displaystyle\sum_{R \neq \emptyset}\frac{Z_{R}(0)}{Z(q)}q^{|R|}\sum_{i=0}^{|R|-1}\frac{1}{\lambda_{i,R}}
\\&=\frac{1}{Z(q)}\displaystyle\sum_{R\neq \emptyset}q^{|R|}\sum_{\phi:\rho(\phi)\supset R, |\rho(\phi)|=|R|+1}\frac{w_{q,R}(\phi)}{q}
\\&=\frac{1}{qZ(q)}\sum_{k=1}^n\sum_{R : |R|=k}\sum_{\phi:\rho(\phi)\supset R, |\rho(\phi)|=k+1}q^{|R|}w_{q,R}(\phi)
\\&=\frac{1}{qZ(q)}\sum_{k=1}^n\sum_{\phi:|\rho(\phi)|=k + 1}w_q(\phi)(k+1)
\\&=\frac{1}{q}\sum_{\phi:|\rho(\phi)|\geq 2}\frac{w_q(\phi)|\rho(\phi)|}{Z(q)},
\end{aligned}
\end{equation}
and the latter equals the r.h.s. of \eqref{MaxEstimate}.
The bounds in \eqref{MaxEstimateFixedRoots} follows by a similar argument.
\end{proof}

\par\noindent
{\bf Remark:} These estimates can often be a gross overestimation
of the largest expected hitting time as soon as  
$q$ is not very small or very large, or when $m$ is not close to $1$ or $n$. 
It is not difficult however to build examples for which the estimates are tight
for all $q$ and $m$. (One can for example consider a one dimensional random walk with drift.)


\section{Re-reading Micchelli-Willoughby proof}\label{MW}

Throughout this section we work with the Markov process $X$ on $\mathcal{X}$ in \eqref{MarkovGenerator}, 
{\em under the assumption that $X$ is reversible with respect to some probability measure $\mu$ on $\mathcal{X}$},
 i.e., $L$ is a self-adjoint operator in $l^2(\mu)$ endowed with the inner product
\begin{equation}\label{inner product}
\langle f,g\rangle_{\mu}=\sum_{x\in\mathcal{X}}\mu(x)f(x)g(x).
\end{equation}
For $R \subsetneq {\cal X}$, possibly $R = \emptyset$, we turn the points of $R$ into ``absorbing points''
by adding infinite weight edges towards a cemetery $\Delta$.
We denote by $\lambda_{0, R} \leq \lambda_{1, R} \leq \cdots \leq \lambda_{l - 1, R}$,
with $l = n - |R|$,
the eigenvalues of $[-L]_{R^c}$,
and following~\cite{DM09, F09, M10},
we define, for each $x$ in $R^c$,
a sequence of {\em local equilibria}
by setting
\begin{eqnarray}\label{SequenceMeas}
\nu^x_{l - 1} & = & \delta_x,\\
\nu^x_{k - 1} & = & \nu^x_{k} \frac{[L]_{R^c} + \lambda_{k, R}}{\lambda_{k, R}}, \quad 1 \leq k \leq l - 1.  \label{acqua_di_rosa}
\end{eqnarray}
Theorem~3.2 in~\cite{MW79} is a statement on symmetric matrices that in our setting can be described as follows.
\begin{theorem} {\bf (Micchelli and Willoughby~\cite{MW79})}\label{macine}
For all $x$ in ${\cal X}$ and all $k < l$,
$\nu_k^x$ is a non-negative measure.
\end{theorem} 
In this section we give a proof of this theorem following the key steps of Micchelli and Willoughby's algebraic proof, however, unlike the original proof,
we develop probabilistic or combinatorial arguments.

Before starting the proof we note, following~\cite{M10},
that equation (\ref{acqua_di_rosa}) can be rewritten as
\begin{equation}
\nu^x_k [L]_{R^c} = \lambda_{k, R} (\nu_{k - 1}^x - \nu_k^x),
\end{equation}
which gives the following interpretation. 
The process leaves the measure, or ``state'', $\nu^x_k$
at rate $\lambda_{k, R}$ to be absorbed in $R$
or to decay into $\nu^x_{k - 1}$.
This can be turned
into a rigorous mathematical statement~\cite{M00},
provided that $\nu^x_k$ and $\nu^x_{k - 1}$
are indeed non-negative measures,
as claimed in Theorem~\ref{macine}.
Then, by looking at the different decay times up to an exponential
time $T_q$ that is independent from the process, 
and by observing that, by Hamilton-Cayley theorem,
the process leaves the state $\nu^x_0$ at rate
$\lambda_{0, R}$ only to be absorbed in $R$, we get,
for all $x$ and $y$ in $R^c$,
\begin{equation}\label{lampo}
\begin{split}
P_x(X(T_q \wedge T_R) = y) 
= & \frac{q}{q + \lambda_{l - 1, R}} \nu^x_{l - 1}(y)
+ \frac{\lambda_{l - 1, R}}{q + \lambda_{l - 1, R}} \frac{q}{q + \lambda_{l -2, R}} \nu^x_{l - 2}(y)\\
& + \cdots 
+ \frac{\lambda_{l - 1, R}}{q + \lambda_{l - 1, R}} \cdots \frac{\lambda_{2, R}}{q + \lambda_{2, R}} \frac{q}{q + \lambda_{1, R}} \nu^x_{1}(y)\\
& + \frac{\lambda_{l - 1, R}}{q + \lambda_{l - 1, R}} \cdots \frac{\lambda_{1, R}}{q + \lambda_{1, R}} \frac{q}{q + \lambda_{0, R}} \nu^x_{0}(y).
\end{split}
\end{equation}
The left hand side in (\ref{lampo}) is the probability to have $\rho(\tau_x(\Phi_{q, R})) = \{y\}$.
Then, multiplying by $Z_R(q) = \prod_i (q + \lambda_{i, R})$ (recall (\ref{lucien})), dividing by $q$, 
and denoting the result by $W_R(q)(x, y)$, we have on the right hand side
\begin{equation} \label{nodo}
	W_R(q)(x, y)
	=  Z_R(q) [q\mathbbm{1}-L]_{R^c}^{-1}(x, y) 
\end{equation}
or, equivalently, 
\begin{equation} \label{capello} 
	W_R(q)(x, y)
	= \frac{1}{q} \sum_{
		\stackrel{\scriptstyle \phi :\: \rho(\tau_x(\phi)) = \{y\}} 
		{\scriptstyle \rho(\phi) \supseteq R}
	}
	q^{|\rho(\phi)| - |R|} w(\phi) 
	\,,
\end{equation}
while equation~\eqref{lampo} now reads
\begin{equation} \label{capra}
	\begin{split}
		W_R(q)(x, y) = 
		& (q + \lambda_{0, R}) \cdots (q + \lambda_{l - 2, R}) \nu^x_{l - 1}(y) \\ 
		& + (q + \lambda_{0, R}) \cdots (q + \lambda_{l - 3, R}) \lambda_{l - 1, R}\nu^x_{l - 2}(y) \\
		& + \cdots + (q + \lambda_{0, R}) \lambda_{l - 1, R} \lambda_{l - 2, R}\cdots \lambda_{2, R} \nu^x_{1}(y) \\
		& + \lambda_{l - 1, R} \cdots \lambda_{1, R} \nu^x_0(y)
		\,. 
	\end{split}
\end{equation}
Next, in order to prove Theorem~\ref{macine}, we can restrict ourselves, by density and continuity, to the case of distinct eigenvalues.
Then equation \eqref{capra} suggests, for $k < l$,
the following relation for the divided differences (see Definition \ref{papelito} in Appendix~\ref{DividedDiff}) of $W_R$:
\begin{equation}\label{bruxelles}
W_R[-\lambda_{0, R}, \dots, - \lambda_{k, R}](x, \cdot) = \lambda_{l-1, R} \cdots \lambda_{k + 1, R} \nu^x_k
= \delta_x \prod_{i = k + 1}^{l - 1} ([L]_{R^c} + \lambda_{i, R}),
\end{equation}
that is
\begin{equation}\label{cavolo}
W_R[-\lambda_{0, R}, \dots, - \lambda_{k, R}] = \prod_{i = k + 1}^{l - 1} ([L]_{R^c} + \lambda_{i, R}).
\end{equation}
It is worth to stress at this point we have just seen 
that equation \eqref{cavolo} would be {\em a consequence}
of Theorem~\ref{macine}, but our goal is {\em to prove} Theorem~\ref{macine}.
This is what we are ready to do now
by following the main steps of Micchelli and Willoughby's proof.

\smallskip\par\noindent
{\bf Step 1: Checking equation (\ref{cavolo})} (without assuming Theorem~\ref{macine}\dots).
\newline
We simply use Definition \ref{DivDiff2} and spectral decomposition.
With $\mu_i$ being the right eigenvector associated with $\lambda_{i, R}$,
and, for any measure $\nu$, $\langle \mu_i, \nu \rangle = \sum_{x \not \in R} \mu_i(x) \nu(x) / \mu(x)$,
we have for any~$q$ and with $W_R$ defined by \eqref{nodo},
\begin{equation}
\nu W_R(q)=\sum_{i=1}^{n-1} \langle \mu_i,\nu\rangle \prod_{i\neq j}(q +\lambda_{i, R})\mu_j.
\end{equation}This gives
\begin{equation}
\begin{aligned}
\nu W_R[-\lambda_{0, R}\ldots,-\lambda_{k, R}]&
=  \sum_{r=0}^k\frac{\nu W_R(-\lambda_{r, R})}{\prod_{m\neq r}(\lambda_{m, R}-\lambda_{r, R})}=
 \sum_{r=0}^k \sum_{j=1}^{l-1} \langle \mu_j,\nu\rangle \frac{\prod_{i\neq j}(\lambda_{i, R}-\lambda_{r, R})\mu_j}{\prod_{m\neq r}(\lambda_{m, R}-\lambda_{r, R})}\\
&=\sum_{r=0}^k \langle \mu_r,\nu\rangle \frac{\prod_{i\neq j}(\lambda_{i, R}-\lambda_{r, R})\mu_r}{\prod_{m\neq r}(\lambda_{m, R}-\lambda_{r, R})}=
\sum_{r=0}^k \langle \mu_r,\nu\rangle \prod_{i=k+1}^{l-1}(\lambda_{i, R}-\lambda_{r, R})\mu_r\\
&=\sum_{r=0}^{l-1} \langle \mu_r,\nu\rangle \prod_{i=k+1}^{l-1}(\lambda_{i, R}-\lambda_{r, R})\mu_r=\nu \prod_{i=k+1}^{l-1}([L]_{R^c}+\lambda_{i, R})
\end{aligned}
\end{equation}
and, by arbitrariness of $\nu$, equation \eqref{cavolo} readily follows.

\smallskip\par\noindent
{\bf Step 2: A combinatorial identity.}
\newline
The key point of the proof lies in the following lemma.

\begin{lemma}\label{MW2}
For any $x\neq y$ in $\mathcal{X}\setminus R$,
\begin{equation}\label{Wxy}
W_R(q)(x,y)=
w(x,y)Z_{R \cup \{x,y\}}(q)+\sum_{z,z'\in {\cal X} \setminus (R \cup \{x,y\})} w(x,z)W_{R \cup \{x,y\}}(q)(z,z')w(z',y)
\end{equation}
In addition one has
\begin{equation}\label{mirtillo}
W_R(q)(x,x)=Z_{R \cup \{x\}}(q).
\end{equation}
\end{lemma}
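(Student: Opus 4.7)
The plan is to derive both identities directly from the combinatorial expression (\ref{capello}) for $W_R(q)(x, y)$, by decomposing the forests in the sum according to the unique simple directed path from $x$ to its root inside $\tau_\phi(x)$.

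The identity (\ref{mirtillo}) is essentially immediate: the constraint $\rho(\tau_\phi(x)) = \{x\}$ says that $x$ is itself a root, so $\rho(\phi) \supseteq R$ combined with $x \in \rho(\phi)$ becomes $\rho(\phi) \supseteq R \cup \{x\}$; the exponent shift $|\rho(\phi)| - |R| = |\rho(\phi)| - |R \cup \{x\}| + 1$ (using $x \notin R$) then exactly absorbs the prefactor $1/q$ in (\ref{capello}), leaving the sum $\sum_{\phi :\: \rho(\phi) \supseteq R \cup \{x\}} q^{|\rho(\phi)| - |R \cup \{x\}|} w(\phi) = Z_{R \cup \{x\}}(q)$.

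For (\ref{Wxy}) with $x \neq y$, I would fix a forest $\phi$ in the summation and consider the unique simple directed path $x = x_0, x_1, \ldots, x_k = y$ inside $\tau_\phi(x)$, splitting the sum according to the length $k \geq 1$. When $k = 1$, removing the edge $(x, y)$ defines a bijection between such $\phi$ and forests $\phi'$ with $\rho(\phi') \supseteq R \cup \{x, y\}$: the inverse map $\phi' \mapsto \phi' \cup \{(x, y)\}$ merges the tree of $x$ in $\phi'$ into the tree of $y$. Tracking the weight factorisation $w(\phi) = w(x, y)\, w(\phi')$ and the shift $|\rho(\phi)| - |R| = |\rho(\phi')| - |R \cup \{x, y\}| + 1$ produces the first term $w(x, y) Z_{R \cup \{x, y\}}(q)$.

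When $k \geq 2$, set $z := x_1$ and $z' := x_{k-1}$, allowing $z = z'$ when $k = 2$. Simplicity of the path, combined with the fact that the interior vertices $x_1, \ldots, x_{k-1}$ are not roots of $\phi$, places $z$ and $z'$ in $\mathcal{X} \setminus (R \cup \{x, y\})$. Removing the two edges $(x, z)$ and $(z', y)$ from $\phi$ yields a forest $\phi''$ with the two additional roots $x$ and $z'$ in which the residual sub-path $x_1, \ldots, x_{k-1}$ still connects $z$ to $z'$; in particular $\rho(\tau_{\phi''}(z)) = \{z'\}$ and $\rho(\phi'') \supseteq R \cup \{x, y\}$. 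The reverse map adds the two edges back: since $x$ and $z'$ are distinct roots of $\phi''$ with $z \in \tau_{\phi''}(z')$, the trees $\tau_{\phi''}(x)$ and $\tau_{\phi''}(z)$ are disjoint, so adding $(x, z)$ only merges these two trees, and the subsequent addition of $(z', y)$ merges the result with $\tau_{\phi''}(y)$ without creating cycles. Summing over admissible $(z, z')$ and using $|\rho(\phi)| - |R| = |\rho(\phi'')| - |R \cup \{x, y\}|$ together with $w(\phi) = w(x, z)\, w(\phi'')\, w(z', y)$ gives exactly $\sum_{z, z'} w(x, z) W_{R \cup \{x, y\}}(q)(z, z') w(z', y)$.

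The main technical point is verifying that the reverse construction in the $k \geq 2$ case never creates a cycle; as indicated above, this reduces to observing that $x$ and $z'$ are distinct roots of $\phi''$ with $z$ lying in $\tau_{\phi''}(z')$ rather than in $\tau_{\phi''}(x)$, so that the two successive edge additions are genuine tree mergers. Combining the contributions from $k = 1$ and $k \geq 2$ then yields (\ref{Wxy}).
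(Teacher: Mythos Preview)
Your argument is correct and follows essentially the same route as the paper's proof: both start from the combinatorial expression (\ref{capello}), split the sum over forests according to whether $x$ is joined to $y$ by a single edge or via intermediate vertices $z, z'$, and then remove the one or two boundary edges to set up bijections with the forest families defining $Z_{R\cup\{x,y\}}(q)$ and $W_{R\cup\{x,y\}}(q)(z,z')$. Your exposition is in fact more careful than the paper's about why the inverse maps produce genuine forests (no cycles) and why $z, z'$ land in $\mathcal{X}\setminus(R\cup\{x,y\})$, but the underlying decomposition and the exponent bookkeeping are identical.
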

\begin{proof}

Let us first consider the case $x\neq y$. Due to \eqref{capello}, we have that
\begin{equation}\label{W1}
 W_R(q)(x,y)
 = \sum_{
	\stackrel{\scriptstyle \phi :\: \rho(\tau_x(\phi)) = \{y\}} 
	{\scriptstyle \rho(\phi) \supseteq R}
}
q^{|\rho(\phi)| - 1 - |R|} w(\phi) 
\end{equation}
We also have
\begin{equation}
\label{W2}
Z_{R \cup \{x,y\}}(q)=\sum_{\phi':\rho(\phi')\supseteq R \cup \{x,y\}}q^{|\rho(\phi')|-2- |R|}w(\phi')
\end{equation}
and 
\begin{equation}
\label{W3}
W_{R \cup \{x,y\}}(q)(z,z')
 = \frac{1}{q} \sum_{
	\stackrel{\scriptstyle \phi'' :\: \rho(\tau_z({\phi''})) = \{z'\}} 
	{\scriptstyle \rho(\phi) \supseteq R \cup \{x, y\}}
}
q^{|\rho(\phi)| - 2 - |R|} w(\phi) 
 = \sum_{
	\stackrel{\scriptstyle \phi'' :\: \rho(\tau_z({\phi''})) = \{z'\}} 
	{\scriptstyle \rho(\phi) \supseteq R \cup \{x, y\}}
}
q^{|\rho(\phi)| - 3 - |R|} w(\phi) 
.
\end{equation}
Next, define for each $\phi$ in (\ref{W1}),
$\phi' = \phi \setminus \{(x, y)\}$ if $(x,y)$ belongs to $\phi$,
and $\phi'' = \phi \setminus \{(x, z);(z', y)\}$ if
$x$ is connected in $\phi$ to $y$ through $z$ and $z'$
(possibly with $z = z'$)
in such a way that $(x,z)$ and $(z', y)$ belong to $\phi$.
Finally, by observing that $|\rho(\phi')|=|\rho(\phi)|+1$ and $|\rho(\phi'')|=|\rho(\phi)|+2$, 
\eqref{Wxy} is obtained from \eqref{W1}, \eqref{W2} and \eqref{W3}.
Then, equation (\ref{mirtillo}) follows from (\ref{W1}) for $y = x \not \in R$.
\end{proof}

\smallskip\par\noindent
{\bf Step 3: Conclusion by induction with Cauchy interlacement theorem.}
\newline For $l \geq 0$, let $\mathcal{P}[l]$ be the following statement:
\begin{quote}
	For all $R \subset {\cal X}$ such that $|R^c| = l$,
	for all $L \geq l$,
	for all $\xi_0 > \xi_1 > \cdots > \xi_L$ such that
	$\xi_i \geq -\lambda_{i, R}$ for all $i < l$,
	for all $k \leq L$, and for all $x, y \not \in R$:
	\begin{equation*}
		W_R[\xi_0, \dots, \xi_k](x, y) \geq 0.
	\end{equation*}
\end{quote}

We can proceed  inductively to show that $\mathcal{P}[l]$ holds.

For $l = 0, 1$, the claim is obvious.
Fix $l \geq 2$.
In the case $x = y$, the inductive hypothesis is unnecessary. 
Indeed, from (\ref{mirtillo}), one has 
\begin{equation}
	W_R[\xi_0, \dots, \xi_k](x, x)  = Z_{R \cup \{x\}}[\xi_0, \dots, \xi_k].
\end{equation}
Then note that, by Cauchy interlacement theorem, $\xi_i \geq -\lambda_{i, R}$ implies that
$\xi_i \geq -\lambda_{i, R \cup \{x\}}$ for $i < l - 1$,
and hence, by Lemma \ref{DivDif}, we get
\begin{equation}
	W_R[\xi_0, \dots, \xi_k](x, x) \geq 0. 
\end{equation}
When $x \neq y$, $\mathcal{P}[l]$ follows in the same way by using (\ref{Wxy})
and the inductive hypothesis.

We can finally conclude the proof of Theorem~\ref{macine}.
It suffices to apply the claim with $\xi_i = -\lambda_{i, R}$ for all $i < l$
and recall (\ref{cavolo}) or (\ref{bruxelles}).


\section{Rooted partitions, coalescence and fragmentation}\label{marmelata}

In this section we present two coalescence and fragmentation
processes closely related with our forest measures.
The first one is obtain by coupling together all our forest measures $\nu_q$.
The second one admits $\nu_q$ as invariant measure
and gives some information on the ``rooted partition''
which is naturally associated with each spanning rooted forest.

\subsection{Coupling the forest measures for different values of $q$.}

The coupling we present can be seen as a coalescence and fragmentation process when 
$q$ decreases to $0$ and $t = 1/q$ is thought as time.
The main idea is to make use of Wilson's original representation of his algorithm
with ``site-indexed random paths'' which we present in the next two paragraphs.

\subsubsection{Random walk: stack representation}
Assume that, to each site of the graph, is attached
an infinite list or collection of arrows pointing towards one neighbour, and that these arrows are independently
distributed according to the discrete skeleton 
transition probabilities $\hat P$ as defined by equations~\eqref{tizio}-\eqref{caio}.
In other words, an arrow, pointing towards the neighbour $y$ 
of a site $x$, appears at each level in the list associated with $x$
with probability $\hat P(x, y) = w(x, y) / \bar w$ (in this context, we set $w(x, x) = \bar w - \sum_{y \neq x} w(x, y)$,
and consider $x$ itself as one of its possible neighbours).
Imagine that each list of arrows attached to a site is piled down
in such a way that it make sense to talk of
an infinite stack with an arrow on the top of this stack.
By using this representation, one can generate the random walk on the graph as follows.
At each jump time, the random walk steps to the neighbour pointed
by the arrow on the top of the stack where the walker is sitting,
and the top arrow is erased from the stack.
This representation is often referred to
as the Diaconis-Fulton representation (see~\cite{DF}).

\subsubsection{Wilson's algorithm: stack representation}
To describe Wilson's algorithm one has to introduce a further ingredient: pointers
to the absorbing state $\Delta$ in each stack.
Such a pointer should appear with probability $q / (\bar w + q)$
at each level in the stack. One way to introduce it
is by generating independent uniform random variables $U$
together with each original arrow in the stack, and by replacing
the latter by a pointer to the absorbing state
whenever $U < q / (\bar w + q)$.

A possible description of Wilson's algorithm is then the following.
\begin{itemize}
\item[i.]
Start with a particle on each site. 
Both particles and sites will be declared either {\em active} or {\em frozen}. 
At the beginning all sites and particles are declared to be active.
\item[ii.]
Choose an arbitrary particle among all the active ones
and look at the arrow at the top of the stack it is seated on. Call $x$ the site where the particle is seated.
\begin{itemize}
\item If the arrow is the pointer towards $\Delta$, declare the particle to be frozen and site $x$ as well.
\item If the arrow points towards another site $y \neq x$, remove the particle and keep the arrow. 
	We say that this arrow is now {\em uncovered}. 
\item If the arrow points to $x$ itself, remove the arrow.
\end{itemize}
\item[iii.]
Once again, choose an arbitrary particle among all the active ones, look at the arrow on the top of the stack it is seated on, and call
$x$ the site where the particle is seated.
\begin{itemize}
\item
If the arrow points to $\Delta$, the particle is declared to be frozen,
and so are declared $x$ and all the sites eventually leading to $x$ by following uncovered top pile arrow paths.
\item
If the arrow points to a frozen site, remove the chosen particle at $x$,
keep the (now uncovered) arrow, and 
freeze site $x$ as well as any site eventually leading to $x$ by following uncovered top pile arrow paths.
\item 
If the arrow points to an active site, then there are two possibilities.
By following the uncovered arrows at the top of the stacks, we either reach 
a different active particle, or run in a loop back to $x$.
In the former case, remove the chosen particle from site $x$ and keep the discovered arrow.
In the latter, erase all the arrows along the loop and put an active particle
on each site of the loop.
Note that this last case includes the possibility for the discovered arrow of pointing to $x$ itself,
in which case, we just have to remove the discovered arrow.
\end{itemize}
\item[iv.]
Iterate the previous step up to exhaustion of the active particles.
\end{itemize}
The crucial observation is that, no matter of the choice of  the particles at the beginning of the steps, when this algorithm stops,
the same arrows are erased and {\em the same spanning forest of uncovered arrows
and with a frozen particle at each root is obtained.}
In particular, by choosing at each step the last encountered active particle, or the same as in the previous step when we just erased a loop,
we perform a simple loop-erased random walk up to freezing.

\subsubsection{Coupling and sampling} \label{giallo}
Since $\nu_q$ can be sampled using Wilson's algorithm as described above, and the same uniform variables $U$ can be used for each~$q$,
this provides a coupling among all the $\nu_q$'s.
By means of this coupling, one can actually sample $\nu_{q_2}$ starting from a sample
of $\nu_{q_1}$ for $q_2 < q_1$.
Let us now explain this fact.
Note first that, running this algorithm for sampling $\nu_{q_2}$,
one can reach at some point the final configuration obtained
for $\nu_{q_1}$ with the only difference that some frozen particles
of the final configuration obtained with parameter $q_1$ can still be
active at this intermediate step of the algorithm run with $q_2$.
It suffices, indeed, to choose the sequence of active particles
in the same way with both parameters. This is possible since each pointer to $\Delta$
in the stacks with parameter $q_2$ is associated with a pointer to $\Delta$
at the same level in the stacks with parameter~$q_1$.
Thus, to obtain a sample of $\nu_{q_2}$ from a sample of $\nu_{q_1}$,
we just have to replace some frozen particle of the configuration 
sampled with $\nu_q$ and continue the algorithm with parameter $q_2$.
To decide which particle in $\rho(\Phi_{q_1})$ has to be unfrozen or not 
we can proceed as follows.
With probability 
\begin{equation}
\label{dormi}
	p = P\left(
		U < \frac{q_2}{\bar w + q_2}
		\:\bigg|\: U < \frac{q_1}{\bar w + q_1}
	\right)
	= \frac{q_2 (q_1 + \bar w)}{q_1 (q_2 + \bar w)}
	\,,
\end{equation}
each particle in $\rho(\Phi_{q_1})$, independently from each other, is kept frozen.
With probability $1 - p$ a particle in a site $x$ in $\rho(\Phi_{q_1})$ is declared active,
$x$ is also declared active and we set at the top of the pile in $x$ an arrow
that points toward $y$ with probability $w(x, y) / \bar w$.

\subsubsection{Coalescence and fragmentation
process: trajectories}
When $q = 1 / t$ continuously decreases, we obtain a coalescence-fragmentation
process $t \mapsto \Phi_{1 / t}$ in which each tree can fragment and partially coalesce with the other
trees of the forest. When a root of a tree turns active, the tree is eventually fragmented
into a forest, some trees of which being
possibly ``grafted" on the previous frozen trees.
It is worth noting that, by \eqref{swizzera}, the mean number of trees is decreasing along
this coalescence-fragmentation process.

The previous observations show that we can sample the ``finite dimensional distributions''
of the process, i.e. the law of $(\Phi_{1 / t_1}, \Phi_{1 / t_2}, \dots, \Phi_{1 / t_k})$ for any choice of
$0< t_1 < t_2 < \dots < t_k$.
We can actually sample whole trajectories $(\Phi_{1 / t})_{0 \leq t \leq T}$
for any finite $T$. In fact, note first that
at each time $t = 1 / q$, the next frozen particle (or root) becoming active
is uniformly distributed among all the roots, and  
the time $\sigma$ when it ``wakes up" is 
such that the variable
\begin{equation}
	V 
	= \frac{1 / \sigma}{\bar w + 1 / \sigma}
	\label{savona}
\end{equation}
has the law of the maximum of $m$ independent
uniform variables on $[0, q/(\bar w + q))$,
with $m$ being the number of roots at time $t$.
Since, for all $v < q / (\bar w + q)$,
\begin{equation}
	{\mathbb P}(V < v)
	= \left(
		\frac{v}{q / (\bar w + q)}
	\right)^m
	= \left(
		\frac{v(\bar w + q)}{q}
	\right)^m
	,
\end{equation}
$V$ has the same law as $q U^{1/m} / (\bar w + q)$, with $U$ uniform on $[0, 1)$.
Using (\ref{savona}) we can then sample $\sigma$ by setting
\begin{equation}
	\sigma 
	= \frac{\bar w + q - q U^{1 / m}}{q \bar w U^{1 / m}}
	= t \; \frac{\bar w + (1 - U^{1 / m}) / t}{\bar w U^{1 / m}}.
	\label{cornice}
\end{equation}

Summing up, in order to sample the whole trajectory it suffices to proceed as follows
once~$\Phi_{1 / t}$ is sampled at a given jump time $t$: 
\begin{itemize}
\item Choose uniformly a root $x$ in $\rho(\Phi_{1 / t})$.
\item Sample the next jump time $\sigma$
	from a uniform random variable $U$ on $[0, 1)$,
	by using (\ref{cornice}).
\item\label{sbilancia} Restart the algorithm with parameter $1 / \sigma$
	by declaring active the particle in $x$ and putting an arrow to $y$ with 
	probability $w(x, y)/ \bar w$.
\end{itemize}

The next proposition
characterizes the law
of the process $t \mapsto \rho\left(\Phi_{1 / t}\right)$.
\begin{proposition}
	If $\Xi_t = \rho(\Phi_{1 / t})$ for all $t \geq 0$,
	then, for any $A_1, \dots, A_k, A_{k + 1} \subset {\cal X}$
	and any $t_1 < \cdots < t_k < t_{k + 1}$,
	\begin{equation}\label{ciuccio}
		\begin{split}
			{\mathbb P} & \left(
				 A_{k + 1} \subset \Xi_{t_{k + 1}}
				\Big |
				A_k \subset \Xi_{t_k},
				\dots,
				A_1 \subset \Xi_{t_1}
			\right) \\
			& = \sum_{R_1 \subset B_1} \dots \sum_{R_k \subset B_k}
			\left(
				\prod_{i = 1}^k
				\left(
					\frac{t_i}{t_{k + 1}}
				\right)^{|R_i|}
				\left(
					1 - \frac{t_i}{t_{k + 1}}
				\right)^{|B_i \setminus R_i|}
			\right)
			{\rm det}_{A_{k + 1}} K_{q_{k + 1}, R}
		\end{split}
	\end{equation}
with $B_i = A_i \setminus (A_{i + 1} \cup \dots \cup A_k)$ for all $i \leq k$,
$R  = R_1 \cup \dots \cup R_k$
and $q_{k + 1} = 1 / t_{k + 1}$.
\end{proposition}
\begin{proof}
We first prove~\eqref{ciuccio} in the case $k = 1$,
when $B_1 = A_1$.
The observations
made in Section~\ref{giallo} imply that
as far as the event $\left\{A_2 \subset \Xi_{t_2}\right\}$
is concerned,
conditioning on
$\left\{
	A_1 \subset \Xi_{t_1}
\right\}$
is nothing but conditioning 
on the value of the uniform
random variables at the top
of the stacks in $A_1$.
Using the previous terminology,
if we keep `frozen' each site in $A_1$
with probability $p$ defined in~\eqref{dormi}
and we call $B$ the set of the remaining
frozen sites, $\Xi_{t_2} \setminus B$ will
{\em not} be a determinantal process with kernel
$K_{q_2, B}$.
Indeed, the waking up procedure we described
after~\eqref{dormi} implies a bias on the distribution
of the top pile arrow at the unfrozen sites: 
such an arrow cannot be replaced by a pointer to $\Delta$.
To recover a determinantal process with kernel $K_{q_2, B}$,
the set $B$ has to be built by keeping frozen each site in $A_1$
with a smaller probability $p'$ solving
\begin{equation} \label{cane}
	p = p' + (1 - p') \frac{q_2}{q_2 + \bar w}
	\,.
\end{equation}
In such a way the top pile arrow at each unfrozen site
can still be replaced by a pointer to $\Delta$
with probability $q_2 / (q_2 + \bar w)$,
and~\eqref{cane} implies that we 
recover the correct biased probability.
Solving~\eqref{cane} gives $p' = q_2 / q_1 = t_1 / t_2$
and~\eqref{ciuccio} follows.

When $k$ is larger than 1,
the formula is simply obtained
by keeping frozen each site $x$ in $\cup_{i \leq k} A_i$
with a probability that depends on the largest $i$
such that $x \in A_i$. 
This is the reason why the sets $B_i$ are introduced:
$i^*$ is the largest $i$ such that $x \in A_i$,
if and only if $x \in B_{i^*}$.
\end{proof}

\begin{figure}[hbtp]
	\caption{
		\label{farfalla}
		Snapshots of the coalescence and fragmentation process on the torus
		at times $t = 1/q$ equal to 0, .5, 2, 8, 32, 128, 512, \dots, 524288.
		Roots are red, non-root vertices at the border of trees are cyan, 
		other vertices are blue.
	}
	\begin{center}
		\includegraphics[width=120pt]{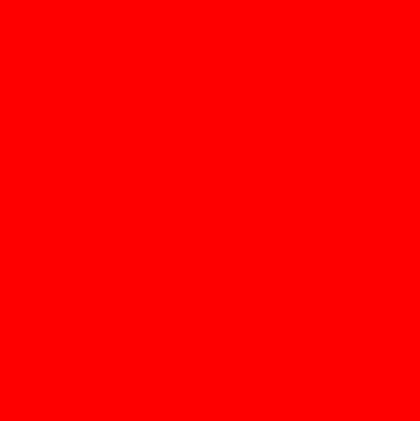}\hspace{20pt}%
		\includegraphics[width=120pt]{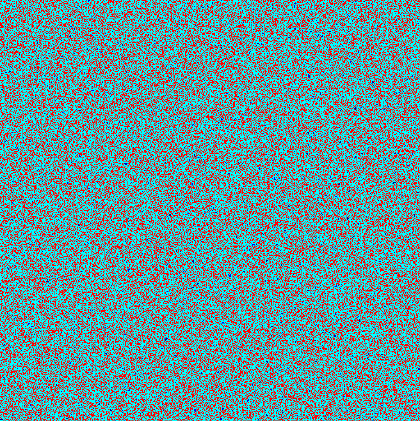}\hspace{20pt}%
		\includegraphics[width=120pt]{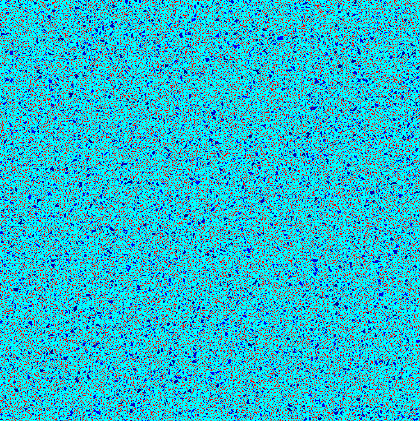}

		\vspace{20pt}
		\includegraphics[width=120pt]{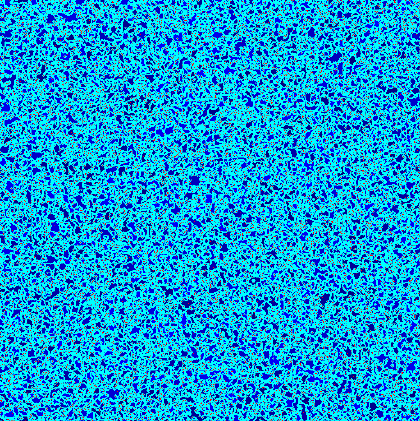}\hspace{20pt}%
		\includegraphics[width=120pt]{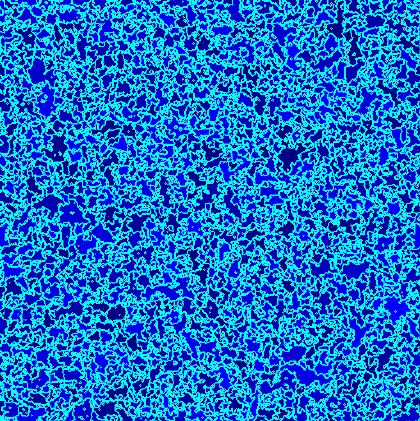}\hspace{20pt}%
		\includegraphics[width=120pt]{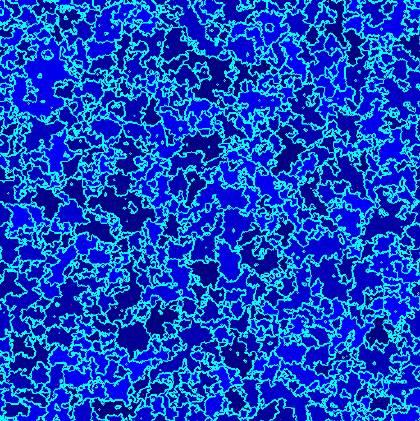}

		\vspace{20pt}
		\includegraphics[width=120pt]{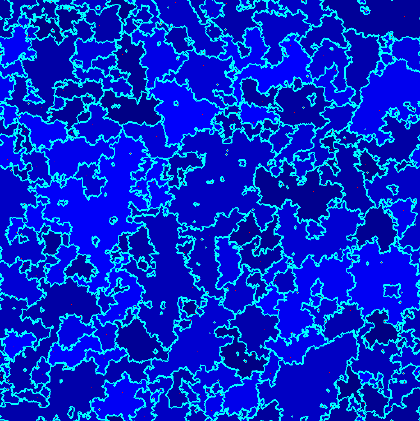}\hspace{20pt}%
		\includegraphics[width=120pt]{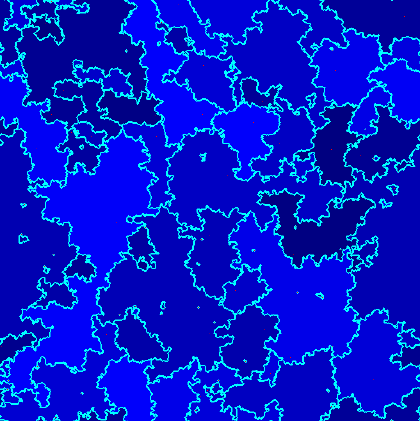}\hspace{20pt}%
		\includegraphics[width=120pt]{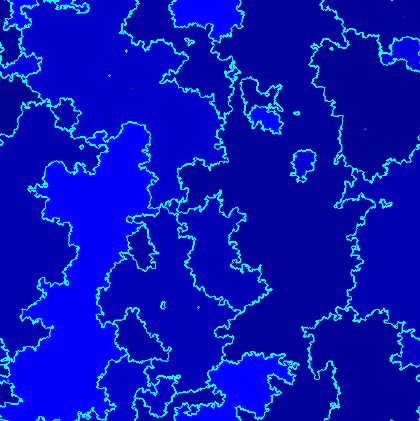}

		\vspace{20pt}
		\includegraphics[width=120pt]{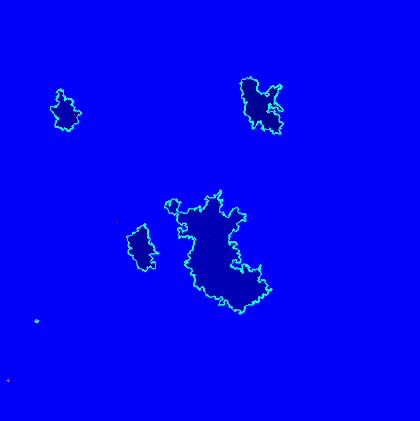}\hspace{20pt}%
		\includegraphics[width=120pt]{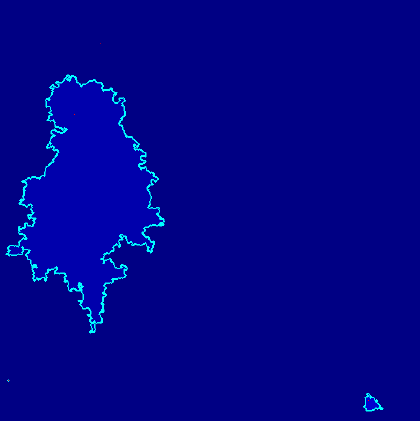}\hspace{20pt}%
		\includegraphics[width=120pt]{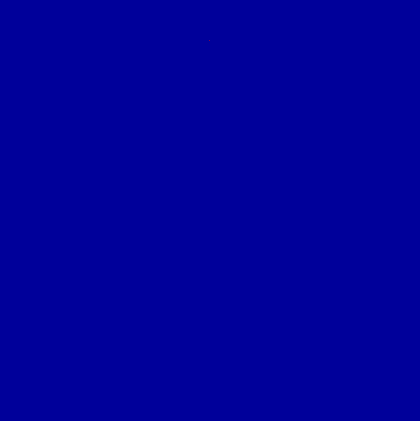}
	\end{center}
\end{figure}

\begin{figure}[hbtp]
	\caption{
		\label{lampada}
		Snapshots 
		at times $t = 1/q$ equal to 0, .5, 2, 8, 32, 128, 512, \dots, 524288
		of the coalescence and fragmentation process on the square grid
		for the random walk in a Brownian sheet potential with inverse temperature
		$\beta = .16$. 
	}
	\begin{center}
		\includegraphics[width=120pt]{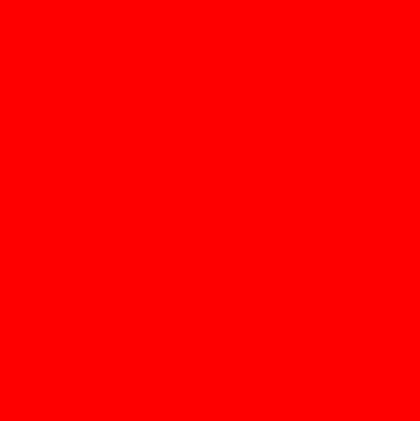}\hspace{20pt}%
		\includegraphics[width=120pt]{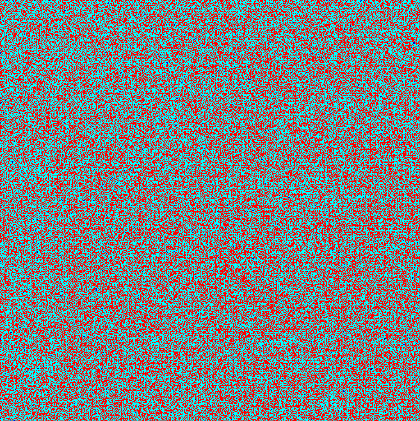}\hspace{20pt}%
		\includegraphics[width=120pt]{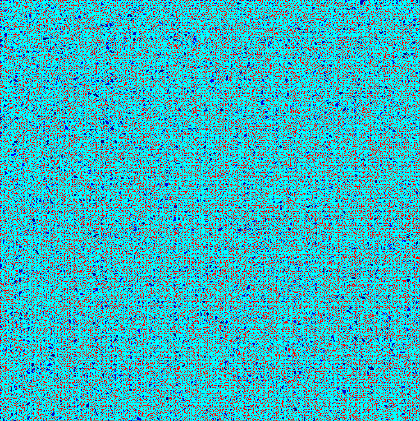}

		\vspace{20pt}
		\includegraphics[width=120pt]{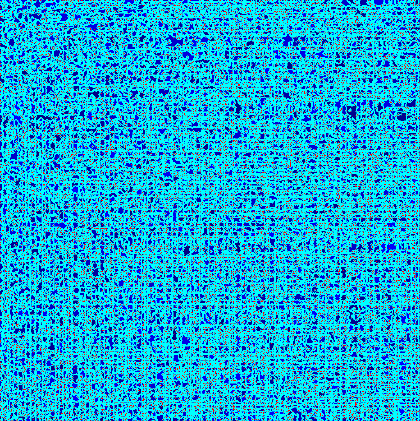}\hspace{20pt}%
		\includegraphics[width=120pt]{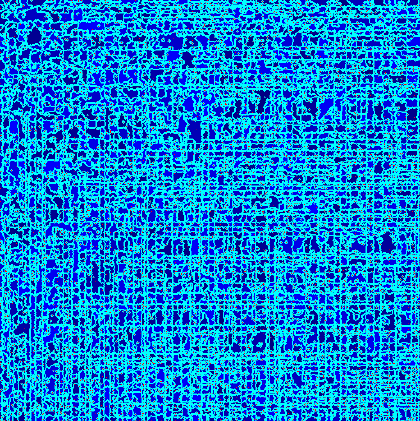}\hspace{20pt}%
		\includegraphics[width=120pt]{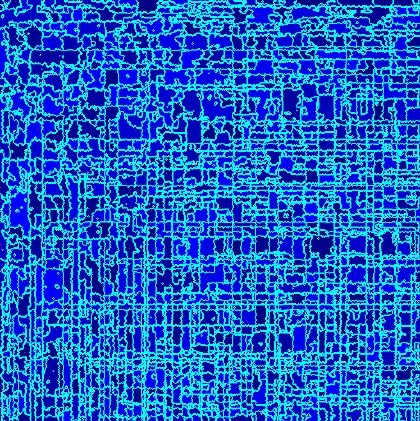}

		\vspace{20pt}
		\includegraphics[width=120pt]{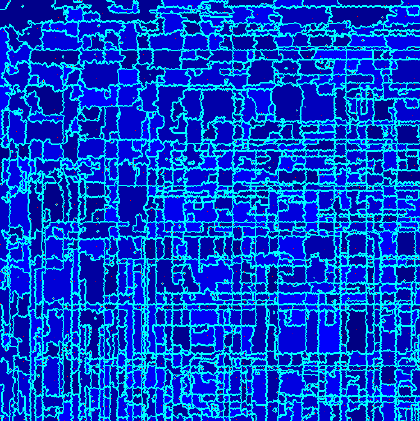}\hspace{20pt}%
		\includegraphics[width=120pt]{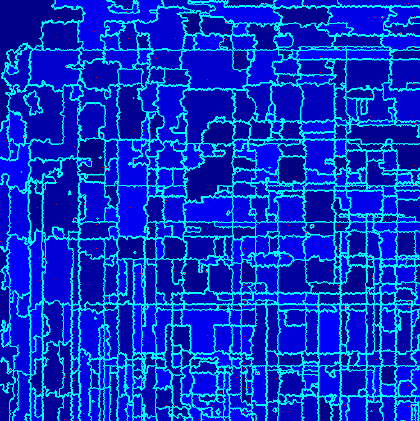}\hspace{20pt}%
		\includegraphics[width=120pt]{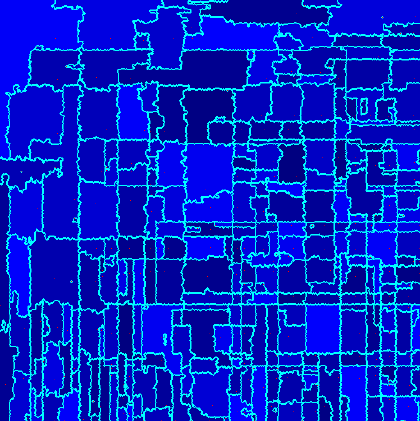}

		\vspace{20pt}
		\includegraphics[width=120pt]{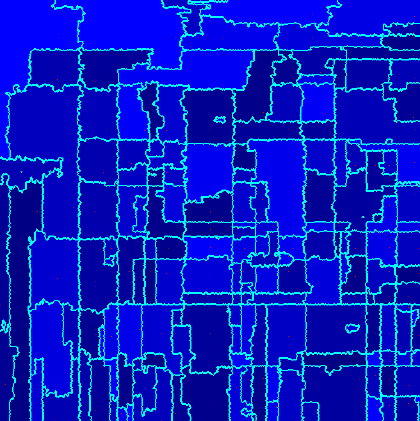}\hspace{20pt}%
		\includegraphics[width=120pt]{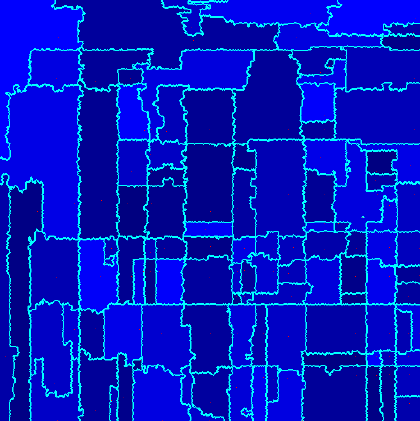}\hspace{20pt}%
		\includegraphics[width=120pt]{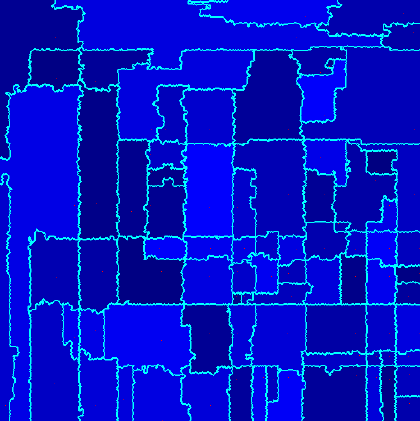}
	\end{center}
\end{figure}

\begin{figure}[htbp]
	\caption{
		\label{sole}
		Tree number in function of time for coalescence and fragmentation
		processes of Figures~\ref{farfalla} and~\ref{lampada} in double logarithmic scale.
	}
	\begin{center}
		\includegraphics{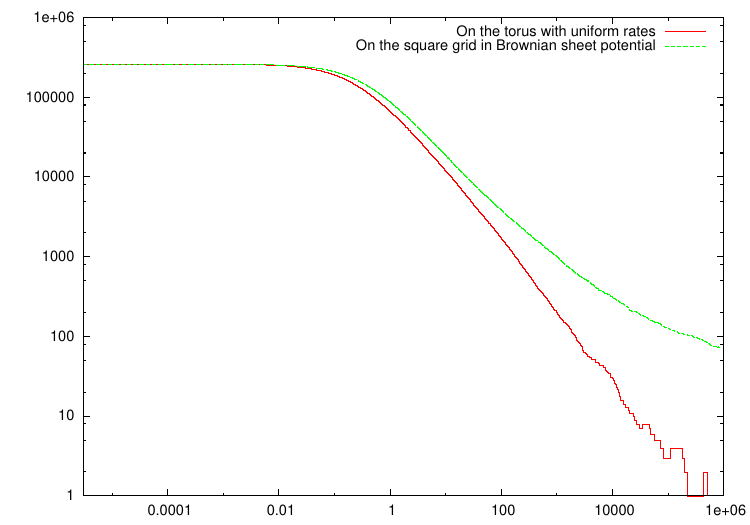}
	\end{center}
\end{figure}

We conclude this section with some open questions.
\begin{enumerate}
\item
	It is a fact that $t \mapsto \Phi_{1 / t}$ ``crosses" almost surely
	all the manifolds
	\begin{equation}
		{\cal F}_m = \{ \phi \in {\cal F} :\: |\rho(\phi)| = m \}
		\,.
	\end{equation}
	Indeed, by (\ref{swizzera}), it starts from ${\cal F}_n$
	and reaches ${\cal F}_1$ almost surely, and, each
	time the number of roots decreases, it does so by only 1
	unit: when the ``activated" tree fragments
	into trees that coalesce only with the previously frozen ones.
	With 
	\begin{equation}
		T_m = \min \{t \geq 0 :\: \Phi_{1 / t} \in {\cal F}_m\}
		\,,
	\end{equation}
	it is also simple to sample $\Phi_{1 / T_m}$.
	But the measure ${\mathbb P}\left( \Phi_q \in \cdot \,\big| |\rho(\Phi_q)| = m\right)$
	is not the law of $\Phi_{1 / T_m}$.
	Is there however a way to use that process to sample this measure?
\item
	One can use that process to estimate $\sum_i \frac{1}{1 + t \lambda_i}$ for all 
	large enough $t$, since this is the expected root number of $|\rho(\Phi_{1 / t})|$.
	Is it then possible to use it to estimate the spectrum of $-L$, 
	or at least the higher part of it in an efficient way ?
\item
	Is it possible to characterize the law of the rooted partition process associated
	with $t \mapsto \Phi_{1 / t}$ like we described the law of $t \mapsto \rho(\Phi_{1 / t})$ ? 
	This partition is the one for which two sites $x$ and $y$ lie in the same component at time $t$
	when they are covered by the same tree in $\Phi_q$ with $q = 1 / t$.
	It is `rooted' since a special point, the root of the tree, is associated
	with each component of the partition.
	We know very little on the law of this partition for each given $q$
	(see next section).
	An easier question would be that of describing the forest process 
	$t \mapsto \Phi_{1 / t}$ itself, since we know that 
	$\Phi_q$ can be described as a determinantal process for each $q$
	(see Section~\ref{archi}).
\end{enumerate}

\subsection{Rooted partition and the forest measure as invariant measure of another coalescence and fragmentation process}
\label{Dynamics}

The other dynamics we want to mention
is a simple variant of the tree random
walk introduced in~\cite{AT89} to prove
the so called Markov chain tree theorem. 
For fixed $q$, the dynamics we now present is another
coalescence and fragmentation process
for which the standard measure $\nu_q$
is the stationary probability measure.

Remind that for a given forest $\phi\in \mathcal{F}$  and $x\in \mathcal{X}$, 
we denote by $\tau_x(\phi)$ the unique tree in $\phi$ containing $x$, and
note that if $e_-\in\rho(\phi)$ then $e\notin\phi$. 
Our dynamics can then be defined as follows.
\begin{definition}{\bf{(Forest Dynamics)}}
\label{RootDynamics}
Fix $q\in[0,\infty)$. Let $\psi$ be the Markov process with state space $\mathcal{F}$ 
characterized by the following formula for the generator ${\cal G}$
acting on functions $f:\mathcal{F}\rightarrow \mathbb{R}$:
\begin{equation}\label{ForestGen}
(\mathcal{G}f)(\phi)=\sum_{e\in\mathcal{E}}\gamma(\phi,e)[f(\phi^e)-f(\phi)], 
\end{equation}
where the transition rate $\gamma(\phi,e)$ and the new state $\phi^e$ are defined as follows:
\begin{enumerate}
\item \label{add}
	If $e_-\in\rho(\phi)$ and $e^+\notin \tau_{e_-}(\phi)$, then 
	$\gamma(\phi,e)=w(e_-,e^+)$ and $\phi^e=\phi\cup\{e\}$.
\item \label{swap}
	If $e_-\in\rho(\phi)$ and $e^+\in \tau_{e_-}(\phi)$, then
	$\gamma(\phi,e)=w(e_-,e^+)$ and $\phi^e=\phi\cup\{e\}\setminus\{e'\}$, 
	with $e'$ being the unique edge in $\phi$ such that $e_-'=e^+$.
\item \label{remove}
	If $e\in\phi$, then
	$\gamma(\phi,e)=q$ and $\phi^e=\phi\setminus\{e\}$.
\item $\gamma(\phi,b)=0$ else.
\end{enumerate}
\end{definition}
The rules corresponding to 1, 2 and 3 can be rephrased by saying 
that we \emph{add, swap} and \emph{remove} a bond from the forest $\phi$, respectively.  
Notice that such a dynamics induces a non-conservative dynamics on the set of roots. 
In particular, when transition 1 occurs, $|\rho(\phi^e)|=|\rho(\phi)|-1$ and 
two trees merge into one.
On the other hand, when transition 3
occurs, $|\rho(\phi^e)|=|\rho(\phi)|+1$ 
and the tree containing $e$ is fragmented into two trees where the new appearing root is at 
$e_-$.
Transitions of type 2 produce a rearranging in one of the tree. 
They leave invariant the cardinality of the set of roots but 
the location of the root in the modified tree is relocated at the vertex 
$e_-'=e^+$.

\begin{proposition}\label{InvMeasRootDyn}{\bf{(Invariance)}}
For all $q > 0$ the measure $w_q$ in \eqref{ForestMeasure} is invariant
for the dynamic defined by $\mathcal{G}$ in \eqref{ForestGen}.
\end{proposition}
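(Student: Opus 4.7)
My plan is to verify invariance through the global balance equation directly: for every fixed target forest $\phi' \in \mathcal{F}$ one needs $\sum_\phi w_q(\phi)\sum_e \gamma(\phi,e)\mathbbm{1}_{\phi^e=\phi'} = w_q(\phi')\,r(\phi')$, where $r(\phi')$ is the total exit rate from $\phi'$. I do not expect detailed balance to hold, because the reverse of an add (rate $w(e)$) is a remove (rate $q$), so the three transition types have to be aggregated. The total exit rate from $\phi'$ is transparent: removals contribute $q|\phi'| = q(n-|\rho(\phi')|)$, while add and swap together account for every edge leaving a root (an add if the endpoint is in another tree of $\phi'$, a swap otherwise), giving the remaining $\sum_{y\in\rho(\phi')}w(y)$.

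Now I count the weighted incoming mass by transition type. Reverse-of-add: any $e_0\in\phi'$ gives a legal source $\phi=\phi'\setminus\{e_0\}$ (since $\underline{e_0}$ is automatically a root of $\phi$ and $\overline{e_0}$ sits in a different tree) at rate $w(e_0)$; the ratio $w_q(\phi)/w_q(\phi') = q/w(e_0)$ cancels the rate and yields the contribution $q(n-|\rho(\phi')|)\,w_q(\phi')$. Reverse-of-remove: sources $\phi=\phi'\cup\{e_0\}$ are legal exactly when $\underline{e_0}\in\rho(\phi')$ and $\overline{e_0}\notin\tau_{\phi'}(\underline{e_0})$, at rate $q$; after cancellation this gives $w_q(\phi')\sum_{y\in\rho(\phi')}\sum_{z\notin\tau_{\phi'}(y)}w(y,z)$.

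The crux is the reverse of a swap. A forest $\phi$ swaps to $\phi'$ through an added edge $e_0$ iff $e_0\in\phi'$, $y:=\overline{e_0}\in\rho(\phi')$, and $\phi$ is obtained from $\phi'$ by deleting $e_0$ and inserting an edge $e'=(y,w)$. The key combinatorial observation is that the defining condition $\overline{e_0}\in\tau_\phi(\underline{e_0})$ forces $w$ to lie in the subtree of $\underline{e_0}$ inside $\phi'\setminus\{e_0\}$; as $e_0$ ranges over all edges of $\phi'$ pointing into $y$, these subtrees form a disjoint partition of $\tau_{\phi'}(y)\setminus\{y\}$. Using the weight ratio $w_q(\phi)/w_q(\phi') = w(e')/w(e_0)$ (the swap preserves $|\rho|$), the swap contribution telescopes to $w_q(\phi')\sum_{y\in\rho(\phi')}\sum_{z\in\tau_{\phi'}(y)\setminus\{y\}}w(y,z)$.

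Adding the three contributions and using $\sum_{z\neq y}w(y,z) = \sum_{z\notin\tau_{\phi'}(y)}w(y,z) + \sum_{z\in\tau_{\phi'}(y)\setminus\{y\}}w(y,z) = w(y)$ exactly reproduces $w_q(\phi')\bigl[q(n-|\rho(\phi')|) + \sum_{y\in\rho(\phi')}w(y)\bigr]$, finishing the proof. The main obstacle is the swap enumeration: a single edge $e_0\in\phi'$ is the added edge for many distinct source forests, one for each admissible choice of $e'$, and I have to verify that my characterization catches all of them and no spurious ones. Before writing the general argument I would sanity-check this on a small example such as a three-vertex graph carrying a single tree, where one can list every source $\phi$ by hand and match the telescoping identity.
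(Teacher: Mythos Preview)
Your proof is correct and follows essentially the same global-balance argument as the paper: split the incoming mass into add, remove, and swap contributions and match against the total exit rate. The only organizational difference is in the swap term, where the paper parametrizes directly by the outgoing-swap edge $e'$ (with $\underline{e'}\in\rho(\phi')$, $\overline{e'}\in\tau_{\phi'}(\underline{e'})$) and observes that the unique edge $e$ in the cycle of $\phi'\cup\{e'\}$ ending at the root gives a bijection with incoming swaps satisfying $w_q(\phi)\gamma(\phi,e)=w_q(\phi')\gamma(\phi',e')$; your subtree-partition enumeration is the same bijection unpacked, since for each target vertex $w\in\tau_{\phi'}(y)\setminus\{y\}$ the edge $e_0$ is precisely the last edge on the $\phi'$-path from $w$ to $y$.
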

\begin{proof}
We have to show that for any $\phi'\in \mathcal{F}$
\begin{equation}\label{InvCond}
\sum_{\phi\in \mathcal{F}}\sum_{e\in\mathcal{E} : \phi^e= \phi'}w_q(\phi)\gamma(\phi,e)=
\sum_{e\in\mathcal{E}} w_q(\phi')\gamma(\phi',e).
\end{equation}
For a given $\phi'\in \mathcal{F}$, we start by splitting the l.h.s. of \eqref{InvCond}
in the three terms corresponding to the different transitions
allowed by the dynamics in Definition \ref{RootDynamics} whenever $\gamma(\phi,e)>0$.
\begin{equation}
\begin{aligned}
\sum_{\phi\in\mathcal{F}}\sum_{e\in\mathcal{E} : \phi^e= \phi'}w_q(\phi)\gamma(\phi,e)&=
\sum_{\phi\subsetneq\phi'}\sum_{e\in\mathcal{E} : \phi^e= \phi'}w_q(\phi)\gamma(\phi,e)
+\sum_{\phi\supsetneq\phi'}\sum_{e\in\mathcal{E} : \phi^e= \phi'}w_q(\phi)\gamma(\phi,e)\\&
+\sum_{\phi: |\rho(\phi)|=|\rho(\phi')|}\sum_{e\in\mathcal{E} : \phi^e= \phi'}
w_q(\phi)\gamma(\phi,e)=I+II+III.
\end{aligned}
\end{equation}
We can rewrite
\begin{equation}
\begin{aligned}\label{I}
I&=\sum_{e\in\phi'}w_q(\phi'\setminus\{e\})\gamma(\phi'\setminus\{e\},e)=
\sum_{e\in\phi'}w_q(\phi')\gamma(\phi',e),
\end{aligned}
\end{equation}
\begin{equation}
\begin{aligned}\label{II}
II&=\sum_{e_-\in\rho(\phi'), e^+\notin \tau_{e_-}(\phi') }
w_q(\phi'\cup\{e\})\gamma(\phi'\cup\{e\},e)
=\sum_{e_-\in\rho(\phi'), e^+\notin \tau_{e_-}(\phi')}
w_q(\phi')\gamma(\phi',e),
\end{aligned}
\end{equation}
and, denoting, for each $e'$ such that $e'_- \in \rho(\phi')$ and
$e'^+ \in \tau_{e'_-}(\phi')$, by $e$ the unique bond
in the only one cycle of $\phi' \cup \{e'\}$ such that $e^+ \in \rho(\phi')$,
with $\phi = \phi' \setminus \{e\} \cup \{e'\}$,
\begin{equation}
\begin{aligned}\label{III}
III
&=\sum_{e'_-\in\rho(\phi'), e'^+\in \tau_{e'_-}(\phi')}
w_q(\phi)\gamma(\phi,e) 
=\sum_{e'_-\in\rho(\phi'), e'^+\in \tau_{e'_-}(\phi')}
w_q(\phi')\gamma(\phi',e') .
\end{aligned}
\end{equation}
Summing $I$, $II$ and $III$ together we then get (\ref{InvCond}).
\end{proof}

\par\noindent
{\bf Remark:} When $q = 0$ we recover the 
Anantharam and Tsoukas dynamics and the proof of the Markov tree theorem.
Indeed, the standard forest measure restricted to spanning trees is the invariant
measure of the dynamics. Starting with a single tree, its roots follows
a Markovian evolution with generator $L$, 
so that, at equilibrium, in the long time limit we have
\begin{equation} \label{nonnino}
	\mu(x) = \frac{
		\sum_{\phi : |\rho(\phi)| = 1}
		w(\phi) {\mathbbm 1}_{\{\rho(\phi) = \{x\}\}}
	}{
		\sum_{\phi : |\rho(\phi)| = 1}
		w(\phi) 
	},
\end{equation}
with $\mu$ being the stationary
distribution associated to $L$.

We can then describe the position
of the roots given the ``tree partition''
associated with $\Phi_q$.
For $\phi$ in ${\cal F}$ with roots $x_1$, \dots, $x_m$,
let us denote by ${\cal P}(\phi) = \{A_1, \dots, A_m\}$
the partition of ${\cal X}$ where each component $A_i$ is the set
of sites spanned by $\tau_{x_i}(\phi)$.
Since each piece of the partition comes with a special point corresponding to a root,
we call {\em rooted partition} the pair $({\cal P}(\phi), \rho(\phi))$ 
We note that for each $A$ in ${\cal P}(\phi)$,
the {\em restricted dynamics} with generator
\begin{equation}
	(L_A f)(x) = \sum_{y \in A} w(x, y) [f(y) - f(x)], 
	\quad f: A \rightarrow {\mathbb R},
	\quad x \in A,
\end{equation}
has only one irreducible component since each $x$ in $A$
is connected with the root. As a consequence
the restricted dynamics has a unique equilibrium measure which we call 
{\em restricted measure}~$\mu_A$.
Note that when $L$ has a {\em reversible} equilibrium $\mu = \mu_{\cal X}$,
then $\mu_A$ is nothing but the equilibrium measure $\mu$ conditioned on
$A$, i.e. $\mu_A = \mu(\cdot | A)$. 
\begin{proposition}\label{LocalEquilibria}{\bf(Roots in restricted equilibrium)}
Fix $m \in \{1,\ldots,n\}$, then 
\begin{equation}\label{localEquilibria}
{\mathbb P}\Big(\rho(\Phi_q)=\{x_1,\cdots,x_m\} \Big| \mathcal{P}(\Phi_q)=\{A_1,\ldots, A_m\}\Big)
=\prod_{i=1}^m \mu_{A_i}(x_i),
\end{equation}
for any partition $\{A_1,\ldots, A_m\}$ of $\mathcal{X}$ and any $x_i\in A_i$, for $i=1,\ldots,m.$
\end{proposition}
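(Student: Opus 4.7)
The plan is to observe that, conditionally on the partition, the conditional law of the roots factorises over the blocks, and that the resulting per-block marginal is exactly the statement of the Markov chain tree theorem applied to the restricted generator $L_{A_i}$.

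First I would unpack what it means to condition on $\mathcal{P}(\Phi)=\{A_1,\dots,A_m\}$. A forest $\phi\in\mathcal{F}$ with $\mathcal{P}(\phi)=\{A_1,\dots,A_m\}$ is nothing but the disjoint union of $m$ rooted trees $\tau_1,\dots,\tau_m$, where $\tau_i$ is a spanning rooted tree on the block $A_i$. In particular every edge of $\phi$ sits inside one of the $A_i$'s, so the multiplicative weight factorises as $w(\phi)=\prod_{i=1}^m w(\tau_i)$. Moreover $|\rho(\phi)|=m$ is determined by the partition, so the factor $q^{|\rho(\phi)|}=q^m$ is constant along the conditioning event and cancels in the ratio. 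Writing
\[
T_{A_i}(x_i):=\sum_{\tau \text{ spanning tree of }A_i\text{ rooted at }x_i} w(\tau),
\]
this gives immediately
\[
\mathbb{P}_q\bigl(\rho(\Phi)=\{x_1,\dots,x_m\}\,\bigl|\,\mathcal{P}(\Phi)=\{A_1,\dots,A_m\}\bigr)
=\prod_{i=1}^m \frac{T_{A_i}(x_i)}{\sum_{y\in A_i}T_{A_i}(y)},
\]
for any $x_i\in A_i$.

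Next I would identify each factor on the right as the restricted equilibrium $\mu_{A_i}(x_i)$. This is the content of the Markov chain tree theorem (Kirchhoff/Tutte, see e.g.\ the Anantharam--Tsoukas derivation recalled at the end of Section~\ref{Dynamics}). Applied to the irreducible restricted dynamics with generator $L_{A_i}$ on $A_i$, it states precisely that the unique invariant probability measure $\mu_{A_i}$ is given by
\[
\mu_{A_i}(x)=\frac{T_{A_i}(x)}{\sum_{y\in A_i}T_{A_i}(y)},\qquad x\in A_i.
\]
Combined with the factorised expression above this yields the claim. It is instructive to note that the same identity can be read off directly from the invariance statement in Proposition~\ref{InvMeasRootDyn} applied with $q=0$ and starting from a single spanning tree on each $A_i$: the dynamics on $\phi$ preserves the rooted partition (only transitions of type~\ref{swap} occur inside each block) and, restricted to one block, becomes the Anantharam--Tsoukas tree random walk, whose root marginal evolves as $L_{A_i}$, with equilibrium $\mu_{A_i}$.

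There is really no obstacle here beyond bookkeeping: the only point to be careful about is that the $w(\phi)$--weight of a forest with prescribed partition truly splits as a product over the blocks (this uses that an edge of $\phi$ can never cross the partition, which is immediate from the definition of $\tau_\phi$ and $\mathcal{P}(\phi)$), and that the sub-Markovian weights coming from the full chain $L$ coincide with the weights of the restricted chain $L_{A_i}$ when we look only at edges inside $A_i$, so that $T_{A_i}$ may indistinctly be computed with $w$ or with the restricted rates. Once these two trivial observations are in place, the Markov chain tree theorem closes the argument.
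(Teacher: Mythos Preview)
Your proof is correct and follows essentially the same route as the paper: factorise the forest weight over the blocks of the partition, cancel the common factor $q^m$, and identify each per-block ratio of rooted spanning-tree weights with $\mu_{A_i}(x_i)$ via the Markov chain tree theorem (equation~\eqref{nonnino}). The additional remarks you make about the Anantharam--Tsoukas dynamics are not needed for the argument but are consistent with the paper's viewpoint.
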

\begin{proof}
For each $i$ in $\{1, \dots, m\}$,
let us call ${\cal T}_i$ the set of rooted spanning trees of $A_i$.
For each $\tau_i$ in ${\cal T}_i$, define
\begin{equation}
	w_i(\tau_i) = \prod_{e \in \tau_i} w(e),
\end{equation}
and for $y_i$ in $A_i$, write $	\rho(\tau_i) = \{y_i\}$, if $y_i$ is the root of the tree $\tau_i \in{\cal T}_i$.
Compute 
\begin{equation}
	\begin{aligned}
		&{\mathbb P} \left(
			\rho(\Phi_q)=\{x_1,\cdots,x_m\}
			\Big|
			\mathcal{P}(\Phi_q)=\{A_1,\ldots, A_m\}
		\right)\\
		&\quad = \frac{
			{\mathbb P}\Big(\rho(\Phi_q)=\{x_1,\cdots,x_m\},\mathcal{P}(\Phi_q)=\{A_1,\ldots, A_m\}\Big)
		}{
			{\mathbb P}\Big(\mathcal{P}(\Phi_q)=\{A_1,\ldots, A_m\}\Big)
		}\\
		&\quad =\frac{
			q^m 
			\sum_{\tau_1 \in {\cal T}_1}
			\dots
			\sum_{\tau_m \in {\cal T}_m}
			\prod_{i = 1}^m w_i(\tau_i) {\mathbbm 1}_{\{\rho(\tau_i) = \{x_i\}\}}
		}{
			q^m 
			\sum_{\tau_1 \in {\cal T}_1}
			\dots
			\sum_{\tau_m \in {\cal T}_m}
			\prod_{i = 1}^m w_i(\tau_i)
		}\\
		&\quad = \prod_{i = 1}^m \frac{
			\sum_{\tau_i \in {\cal T}_i} w_i(\tau_i) {\mathbbm 1}_{\{\rho(\tau_i) = \{x_i\}\}}
		}{
			\sum_{\tau_i \in {\cal T}_i} w_i(\tau_i) 
		}\\
		&\quad = \prod_{i = 1}^m \mu_{A_i}(x_i)
	\end{aligned}
\end{equation}
where the last equality follows by \eqref{nonnino}
applied to the restricted dynamics.
\end{proof}

\begin{figure}[htbp]
	\caption{
		\label{crous}
		A rooted partition with 50 roots (at the center of red diamonds)
		sampled for the Metropolis random walk in a Brownian sheet potential
		on the $987 \times 610$ grid and with inverse temperature $\beta = .06$.
		Blue levels depend on the potential:
		the lower the potential, the darker the blue.
		We see that each root is distributed according to the restricted equilibrium of its own piece of the partition.
		See also the first two pictures in Figure~\ref{mimosa}.
	}
	\begin{center}
		\includegraphics[width=400pt]{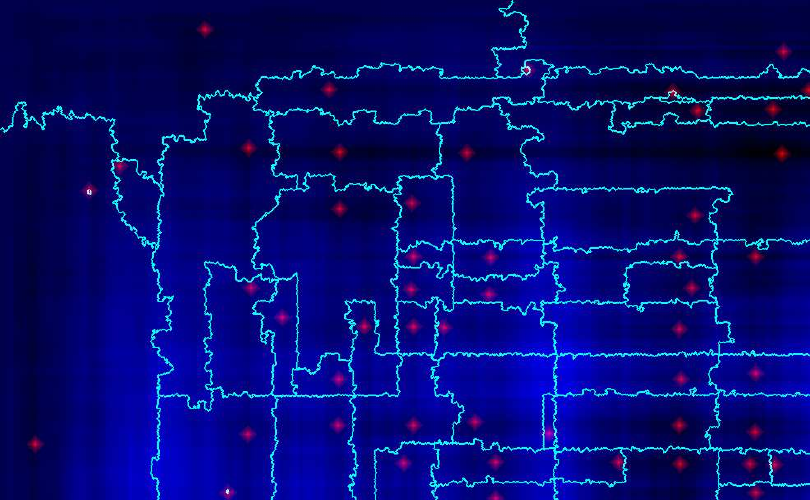}
	\end{center}
\end{figure}

\par\noindent
{\bf Remark:} 
When $X$ is reversible with respect to a measure $\mu$,
this gives a way to build the associated Gaussian free field
with mass $m = \sqrt{2q}$, that is the Gaussian process
$\xi = (\xi_x)_{x \in {\mathcal X}}$ with covariance matrix
\begin{equation}
	\Gamma = \left(
		\frac{G_q(x, y)}{\mu(y)}
	\right)_{x, y \in {\mathcal X}}
	= \left(
		\frac{K_q(x, y)}{q\mu(y)}
	\right)_{x, y \in {\mathcal X}}
	,
\end{equation}
by successive sampling of the standard measure $\nu_q$.
Start from independent centered random variables 
$\zeta_x$ with variance $\mu(x)$, $x \in {\mathcal X}$, sample $\Phi$
according to $\nu_q$, call $A(x)$ the set of vertices
of $\tau_x(\Phi)$  and set
\begin{equation}
	\tilde\xi_x
	= \frac{1}{\mu(A(x))} \sum_{y \in A(x)} \frac{\zeta_y}{\sqrt{q}}
	.
\end{equation}
Then the random field $\tilde\xi$ has zero mean and covariance matrix $\Gamma$
and the rescaled partial sums $\sum_{i=1}^{n} \tilde\xi^i / \sqrt{n}$,
with $\tilde\xi^1$, $\tilde\xi^2$, \dots independent
copies of $\tilde\xi$, 
converges in law to $\xi$ as $n$ goes to infinity.
Indeed, for each $x$ and $y$ in $\mathcal{X}$, $\tilde\xi_x$ and $\tilde\xi_y$
are centered and one computes
\begin{equation}
	{\mathbb E}\left[
	\tilde\xi_x \tilde\xi_y
	\right]
	= \sum_{A\subset\mathcal{X}} \mathbb{P}\left(
		A(x) = A(y) = A
	\right) \frac{1}{\mu(A)^2} \sum_{z \in A} \frac{\mu(z)}{q}
	= \sum_{A\ni x, y} \mathbb{P}\left(
		A(x) = A(y) = A
	\right) \frac{1}{q\mu(A)}
	.
\end{equation}
Since, following Wilson's algorithm,
\begin{align}
	K_q(x, y)
	&= \mathbb{P}\left(
		y \in \rho(\Phi_q),
		A(x) = A(y)
	\right)\\
	&= \sum_{A \ni x, y} \mathbb{P}\left( 
		A(x) = A(y) = A
	\right) \mathbb{P}\left(
		y \in \rho(\Phi_q) \Big|\,
		A(x) = A(y) = A
	\right)\\
	&= \sum_{A \ni x, y} \mathbb{P}\left( 
		A(x) = A(y) = A
	\right) \frac{\mu(y)}{\mu(A)}
	,
\end{align}		
we conclude 
\begin{equation}
	{\mathbb E}\left[
		\tilde\xi_x \tilde\xi_y
	\right]
	= \frac{K_q(x, y)}{q\mu(y)}
	.
\end{equation}

\section{The edge process}\label{archi}

We assume in this section that 
$(q(x) :\: x \in {\cal X})$
is a non-zero collection
of {\em finite} killing rates.
The infinite rate case can simply be obtained
by computing the corresponding limit 
in the next assertions.

The next theorem is due to Chang~\cite{C13}.
We write it with our notation
and in Appendix~\ref{merlo}
we give Chang's proof in our context
for completion.
Let us first introduce some more notation.
Given $x$ on ${\cal X}$ and $e$ in ${\cal E}$
we call 
\begin{equation}
	J_Q^+(x, e) = G_Q(x, e_-) w(e)
\end{equation}
the expected number of crossings
of the (oriented) edge $e$ up to time $T_Q$.
We also define the net flow through $e$
starting from $x$ by
\begin{equation}
	J_Q(x, e) = J_Q^+(x, e) - J_Q^+(x, -e)
	\,.
\end{equation}

\begin{theorem}{\bf (Chang~\cite{C13})}\label{borgorosa}
	For any $A = \{e_1, \dots, e_k\} \subset {\cal E}$
	\begin{equation}\label{tenda}
		{\mathbb P}\left(
			A \subset \Phi_Q
		\right)
		= {\mathbb P}\left(
			e_1, e_2, \dots, e_k \in \Phi_Q
		\right)
		= {\rm det}_A {\cal K}_Q^+
	\end{equation}
	with 
	\begin{equation}
		{\cal K}_Q^+(e, e') = J_Q^+(e_-, e') - J_Q^+(e^+, e')
		\,, \quad e, e' \in {\cal E}
		\,.
	\end{equation}
	In addition,
	denoting by $\{\pm e_1, \dots, \pm e_k \in \Phi_Q\}$
	the event that for all $i \leq k$ either $e_i$ or $-e_i$
	belong to $\Phi_Q$, it holds
	\begin{equation}\label{anatra}
		{\mathbb P}\left(
			\pm e_1, \dots, \pm e_k \in \Phi_Q
	\right)
	= {\rm det}_A {\cal K}_Q
	\end{equation}	
	with 
	\begin{equation}
		{\cal K}_Q(e, e') = J_Q(e_-, e') - J_Q(e^+, e')
		\,, \quad e, e' \in {\cal E}
		\,.
	\end{equation}
\end{theorem}

\par\noindent{\bf Remark:}
In the symmetric case $w(x, y) = w(y, x) = 1$
for all $e = (x, y) \in {\cal E}$,
by choosing $q(x) = 1$ for all $x \in {\cal X}$
one obtains, as a consequence of~\eqref{anatra},
a proof of a tree-size biased version
of the negative edge correlation conjecture
for the uniform unrooted spanning forest
(see~\cite{K00, GW04}).
Recalling~\eqref{toussaint}, one has indeed,
for each $e$ and $e'$ in ${\cal E}$ 
and the associated unoriented edges $\bar e$ and $\bar e'$ in $\bar {\cal E}$,
\begin{equation}
	\begin{split}
		{\mathbb P}\left(
			\bar e, \bar e' \in \bar \Phi_1
		\right)
		& = {\cal K}_Q(e, e) {\cal K}_Q(e', e') - \left(
			{\cal K}_Q(e, e')
		\right)^2 \\
		& \leq {\cal K}_Q(e, e) {\cal K}_Q(e', e') 
		= {\mathbb P}\left(
			\bar e \in \bar \Phi_1
		\right) {\mathbb P}\left(
			\bar e' \in \bar \Phi_1
		\right)
		\,,
	\end{split}
\end{equation}
where the square in the first equation
is a consequence of the reversibility
of the Green's kernel with respect to the uniform measure.
But we were unable to deduce from this
a proof of the original conjecture.
				
\medskip\par
Consider the random forest $\Phi_{\lambda Q}$ for a given $\lambda$ and killing matrix $Q$.
Set $Z_Q(\lambda)=Z_{\lambda Q}$ and notice that, as $\lambda$ goes to 0,
$\Phi_{\lambda Q}$ converges in law to a random spanning tree $\tau_Q$ with distribution
\begin{equation}
	{\mathbb P}\left(\tau_Q=\tau\right)=\frac{w(\tau)q(\rho(\tau))}{Z_Q'(0)}
\end{equation}
where $\rho(\tau)$ stands for the root of the spanning tree $\tau$.
By~\eqref{nonnino} this is the law of a random tree obtained by running Wilson's algorithm
with root choosen with probability
\begin{equation}
	\nu(x)=\frac{q(x)\mu(x)}{\sum_{z}q(z)\mu(z)}
	\,,\quad x \in {\cal X},
\end{equation}
and used as the absorbing state.
Chang~\cite{C13} showed that $\tau_Q$
is also a determinantal process.
By computing the same limits in a different way,
we give here a different proof
and, more importantly, a different expression for the kernel,
allowing for an easy comparison with the reversible transfer current theorem.
For $x, y$ in ${\cal X}$ and $e$ in ${\cal E}$ 
we define $J_y^+(x, e)$ as the expected number of crossings
of the (oriented) edge $e$ by the process $X$ started from $x$ and stopped
at the hitting time of $y$, $T_y$.
We also define the net flow through $e$ by
\begin{equation}
	J_y(x, e) = J_y^+(x, e) - J_y^+(x, -e)
	\,.
\end{equation}

\begin{theorem}{\bf(Transfer current theorem)}\label{corvaro}
For any $A = \{e_1, \dots, e_k\} \subset {\cal E}$
	\begin{equation}\label{telefonino}
		{\mathbb P}\left(
			A \subset \tau_Q
		\right)
		= {\mathbb P}\left(
			e_1, e_2, \dots, e_k \in \tau_Q
		\right)
		= {\rm det}_A {\cal H}_Q^+
	\end{equation}
	with 
	\begin{equation}
		{\cal H}_Q^+(e, e') = \frac{
			\beta(e^+, e_-) J_{e^+}^+(e_-, e') - \beta(e_-, e^+) J_{e_-}^+(e^+, e')
		}{
			\beta(e^+, e_-) + \beta(e_-, e^+) 
		}
		\,, \quad e, e' \in {\cal E}
		\,,
	\end{equation}
	where
	\begin{equation}
		\beta(x, y) = {\mathbb E}_x\left[
			\int_0^{T_y} q(X(t)) dt
		\right]
		\,, \quad x, y \in {\cal X}
		\,.
	\end{equation}
	In addition,
	\begin{equation}\label{cavallo}
		{\mathbb P}\left(
			\pm e_1, \dots, \pm e_k \in \tau_Q
	\right)
	= {\rm det}_A {\cal H}_Q
	\end{equation}	
	with 
	\begin{equation}
		{\cal H}_Q(e, e') = \frac{
			\beta(e^+, e_-) J_{e^+}(e_-, e') + \beta(e_-, e^+) J_{e_-}(e^+, -e')
		}{
			\beta(e^+, e_-) + \beta(e_-, e^+) 
		}
		\,, \quad e, e' \in {\cal E}
		\,.
	\end{equation}
\end{theorem}

\par\noindent
{\bf Remark:} In the reversible case 
one has, for any $x, y$ in ${\cal X}$ and $e$ in ${\cal E}$,
$J_y(x, e) = J_x(y, -e)$, so that
\begin{equation}
	{\cal H}(e, e') = J_{e^+}(e_-, e')
	\,,
\end{equation}
and we recover Burton and Pemantle's theorem~\cite{BP93}.
The difference 
\begin{equation}\label{penna}
	J_y(x, e) - J_x(y, -e)
	= J_y(x, e) + J_x(y, e)
\end{equation}
is indeed the net flow 
through the edge $e$ during the commutation between $x$ and $y$,
i.e,  along the process started from $x$ and stopped at its first
return to $x$ after $T_y$.
Since, by reversibility, each such path appears in the corresponding expectation
with the same weight as its reversed path, and it contributes to the net flow
with the opposite sign, the difference~\eqref{penna} is equal to zero.

\begin{proof}
We simply compute the limit of $K_{\lambda Q}^+(e, e')$ as $\lambda$ goes to 0.
To this end we set $e = (x, y)$ and, for any $t_2 \geq t_1 \geq 0$,
we denote by $\mathcal{C}^{+}_{e'}[t_1, t_2]$ the (random) number of crossing of the edge $e'$ (from $e'_-$ to $e'^+$)
in the time interval $[t_1, t_2]$.
Hence $J_Q^+(e_-, e')=\mathbb{E}_{x}\bigl[\mathcal{C}^{+}_{e'}[0, T_Q]\bigr]$
and 
\begin{equation}
	\begin{split}
		\lim_{\lambda\to 0} {\cal K}_{\lambda Q}^+(e, e')
		&=\lim_{\lambda\to 0}\left\{
			J_{\lambda Q}^+(e_-, e')-J_{\lambda Q}^+(e_+, e')
		\right\}\\
		& =\lim_{\lambda\to 0} \left\{
			\mathbb{E}_{x}\left[
				\mathcal{C}^{+}_{e'}[0, T_{\lambda Q}]
			\right]
			- \mathbb{E}_{y}\left[
				\mathcal{C}^{+}_{e'}[0, T_{\lambda Q}]
			\right]
		\right\}
		\,.
	\end{split}
\end{equation}
Set $\sigma_0 = 0$, $\sigma_1 = T_{y}$, define recursively
\begin{equation}
	\begin{split}
		& \sigma_{2k} = \inf\{t \geq \sigma_{2k-1} :\: X(t) = x\}
		\,, \\
		& \sigma_{2k+1} = \inf\{t \geq \sigma_{2k} :\: X(t) = y\}
		\,,
	\end{split}
\end{equation}
and notice that 
\begin{equation}
	\begin{split}	
		& \mathbb{E}_{x}\left[
			\mathcal{C}^{+}_{e'}[0, T_{\lambda Q}]
		\right] \\
		& \quad = \sum_{k\geq 0} \left\{
				\mathbb{E}_{x}\left[
					\mathcal{C}^{+}_{e'}\left[
						\sigma_{2k} \wedge T_{\lambda Q}, \sigma_{2k+1} \wedge T_{\lambda Q}
					\right]
				\right]
				+ \mathbb{E}_{x}\left[
					\mathcal{C}^{+}_{e'}\left[
						\sigma_{2k+1} \wedge T_{\lambda Q}, \sigma_{2k+2} \wedge T_{\lambda Q}
					\right]
				\right]
		 \right\}
		\,.
	\end{split}
\end{equation}
With
\begin{equation} \label{maia}
	\xi_{x, y}(\lambda) = {\mathbb P}_x\left(
		T_y < T_{\lambda Q}
	\right)
	= {\mathbb E}_x\left[
		\exp\left\{
			-\lambda \int_0^{T_y} q(X(t)) dt
		\right\}
	\right]
\end{equation}
and 
\begin{equation}
	J^+_{\lambda Q, y}(x, e') = {\mathbb E}_x\left[
		{\cal C}^+_{e'}[0, T_y \wedge T_{\lambda Q}]
	\right]
\end{equation}
we get
\begin{equation}
	\begin{split}	
		\mathbb{E}_{x}\left[
			\mathcal{C}^{+}_{e'}[0, T_{\lambda Q}]
		\right] 
		& = \sum_{k\geq 0} \left(
			\xi_{x, y}(\lambda) \xi_{y, x}(\lambda)
		\right)^k \left\{
			J^+_{\lambda Q, y}(x, e')
			+ \xi_{x, y}(\lambda) J^+_{\lambda Q, x}(y, e')
		 \right\} \\
		& = \frac{
			J^+_{\lambda Q, y}(x, e')
			+ \xi_{x, y}(\lambda) J^+_{\lambda Q, x}(y, e')
		}{
			1 - \xi_{x, y}(\lambda) \xi_{y, x}(\lambda)
		}
	\end{split}
\end{equation}
and
\begin{equation}
	\begin{split}
		{\cal K}_{\lambda Q}^+(e, e')
		& = \mathbb{E}_{x}\left[
			\mathcal{C}^{+}_{e'}[0, T_{\lambda Q}]
		\right]
		- \mathbb{E}_{y}\left[
			\mathcal{C}^{+}_{e'}[0, T_{\lambda Q}]
		\right] \\
		& = \frac{
			\left(
				1 - \xi_{y, x}(\lambda)
			\right) J^+_{\lambda Q, y}(x, e')
			- \left(
				1 - \xi_{x, y}(\lambda)
			\right) J^+_{\lambda Q, x}(y, e')
		}{
			1 - \xi_{x, y}(\lambda) \xi_{y, x}(\lambda)
		}
		\,.
	\end{split}
\end{equation}
By differentiating in $\lambda$ equation~\eqref{maia}
one gets
\begin{equation}
	\xi_{x, y}(\lambda) = 1 - \lambda {\mathbb E}_x\left[
		\int_0^{T_y} q(X(t)) dt
	\right]
	+ o(\lambda)
\end{equation}
and 
\begin{equation}
	\begin{split}
		\lim_{\lambda \rightarrow 0} {\cal K}_{\lambda Q}^+(e, e')
		& = \frac{
			\beta(y, x) J^+_{y}(x, e')
			- \beta(x, y) J^+_{x}(y, e')
		}{
			\beta(y, x) + \beta(x, y)
		}
		\,.
	\end{split}
\end{equation}
This proves~\eqref{telefonino} and~\eqref{cavallo}.
\end{proof}

\appendix 

\section{Schur complement and trace process}\label{SCHUR}
Assume we have a Markov process $Y$ on a finite state space $\mathcal{Y}$, 
with generator $\mathcal{L}$ given by
\begin{equation}
(\mathcal{L}f)(y)=\sum_{z\in \cal Y}\alpha(y,z)[f(z)-f(y)],\quad y\in \mathcal{Y}, 
\end{equation}
with  $f:\mathcal{Y}\rightarrow \mathbb{R}$ arbitrary and $\{\alpha(y,z)\in [0, +\infty) :\: (y,z)\in \mathcal{Y}\times \mathcal{Y}\}$ 
a given collection of non-negative and finite transition rates.
With
\begin{equation}
	\bar \alpha = \max_{y \in {\cal Y}} \alpha(y)
	= \max_{y \in {\cal Y}} \sum_{z \neq y} \alpha(y, z)
	< + \infty \,,
\end{equation}
we define a discrete skeleton $\hat Y$ of $Y$
with transition matrix $\hat P$ such that
\begin{equation}
	{\cal L} = \bar \alpha (\hat P - \mathbbm{1}_{\cal Y}) \,.
\end{equation}
In other words, $Y$ can be build from $\hat Y$
by updating the position of the process
after independant exponential times of paramater $\bar \alpha$
according to the position sequence of $\hat Y$. 

Fix a subset $A\subset \mathcal{Y}$ and define a new Markov chain $\hat{Y}^A$ with state space $A$ obtained as the trace of the process $\hat Y$ on $A$.
In other words, $\hat{Y}^A$ is the random walk with transition matrix $\hat{P}^A$
with entries 
\begin{equation}\label{trace}
\hat{P}^A(x,y)=P_x(\hat{Y}(\hat{T}^+_A)=y), \quad \text{ for } x,y \in A,
\end{equation}
where $\hat{T}^+_A$ denotes the first return time in $A$ of the chain $\hat Y$.

Back to the continuous-time setting, denote by $Y^A$ be the continuous-time version of $\hat{Y}^A$ 
with jump times given by exponential random variables of parameter 
\begin{equation}
\bar\alpha=\max_{y\in {\cal Y}}\alpha(y)<\infty,
\end{equation}
i.e. the process with generator
\begin{equation}\label{contVSdisc}
\mathcal{L}^A=\overline{\alpha}(\hat{P}^A-\mathbbm{1}_A).
\end{equation}
Equivalently, $Y^A$ is the trace of the Markov process $Y$ on $A$, namely,
the process obtained by following the trajectory of $Y$
at infinite velocity outside $A$ and without speeding up inside $A$.

\begin{proposition}\label{SchurProb}{\bf{(Schur complement and trace process)}}
Given the Markov process $Y$ on $\mathcal{Y}$ with generator $\mathcal{L}$,
fix a subset  $A\subset \mathcal{Y}$.
Let $\mathcal{L}^A$ be the generator of the Markov process $Y^A$ obtained as the trace of the process $Y$ on $A$.
Then, $\mathcal{L}^A$ is the Schur complement of $[{\cal L}]_{A^c}$ in $\mathcal{L}$, i.e.
\begin{equation}\label{schur2}
\mathcal{L}^A=[\mathcal{L}]_A-[\mathcal{L}]_{A,A^c}[\mathcal{L}]^{-1}_{A^c}[\mathcal{L}]_{A^c,A} \,,
\end{equation}
where $[{\cal L}]_{A,A^c}$ and $[{\cal L}]_{A^c, A}$ are the operators obtained from the matrix representation of ${\cal L}$,
by keeping only those rates from $A$ to $A^c$ and $A^c$ to $A$, respectively.
\end{proposition}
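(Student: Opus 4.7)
The plan is to work with the discrete skeleton $\hat Y$ with transition matrix $\hat P$ introduced in Section~\ref{loops} and to exploit $\mathcal{L} = \bar\alpha(\hat P - \mathbbm{1})$ to translate the desired identity between continuous-time generators into a cleaner identity between stochastic matrices. It is crucial that the same constant $\bar\alpha = \max_{y \in {\cal Y}} \alpha(y)$ is used to build both $\hat Y$ and $\hat Y^A$ (as in \eqref{contVSdisc}), so that the time-change is the same in both cases and $\mathcal{L}^A = \bar\alpha(\hat P^A - \mathbbm{1}_A)$, $[\mathcal{L}]_B = \bar\alpha([\hat P]_B - \mathbbm{1}_B)$ and $[\mathcal{L}]_{B,B'} = \bar\alpha [\hat P]_{B,B'}$ for disjoint $B, B' \subset {\cal Y}$.

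The first step is a one-step decomposition of the discrete trace. Starting from $x \in A$ I would split the event $\{\hat Y(\hat T^+_A) = y\}$ according to whether the first step stays in $A$ or leaves it: this yields, for all $x, y \in A$,
$$
\hat p^A(x, y) = \hat p(x, y) + \sum_{z \in A^c} \hat p(x, z)\, P_z\!\left(\hat Y(\hat T_A) = y\right).
$$
The second step expresses the hitting distribution appearing above as a Green's function on $A^c$. Exactly as in the derivation of \eqref{T4}, the geometric series $\sum_{k \geq 0}[\hat P]_{A^c}^k = (\mathbbm{1}_{A^c} - [\hat P]_{A^c})^{-1}$ together with a last-exit-from-$A^c$ decomposition gives
$$
P_z\!\left(\hat Y(\hat T_A) = y\right) = \sum_{z' \in A^c} (\mathbbm{1}_{A^c} - [\hat P]_{A^c})^{-1}(z, z')\, \hat p(z', y),\quad z \in A^c,\ y \in A.
$$

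Combining these two identities yields the matrix equation
$$
\hat P^A = [\hat P]_A + [\hat P]_{A, A^c}\,(\mathbbm{1}_{A^c} - [\hat P]_{A^c})^{-1}\,[\hat P]_{A^c, A}.
$$
Subtracting $\mathbbm{1}_A$, multiplying by $\bar\alpha$, and using $[\mathcal{L}]_{A^c}^{-1} = -\bar\alpha^{-1}(\mathbbm{1}_{A^c} - [\hat P]_{A^c})^{-1}$, the $\bar\alpha$ factors telescope and the right-hand side becomes exactly $[\mathcal{L}]_A - [\mathcal{L}]_{A, A^c}[\mathcal{L}]_{A^c}^{-1}[\mathcal{L}]_{A^c, A}$, yielding \eqref{schur2}.

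I do not anticipate a genuine obstacle: the argument is a combination of a probabilistic last-exit decomposition and a short linear-algebra computation. The only subtle point is the bookkeeping around the global choice of $\bar\alpha$, which must be the maximum over all of $\mathcal{Y}$ (and not only over $A$) so that the relation $\mathcal{L}^A = \bar\alpha(\hat P^A - \mathbbm{1}_A)$ is compatible with the definition of the trace process and the $\bar\alpha$-factors cancel cleanly when passing from $\hat P$-identities to $\mathcal{L}$-identities. Invertibility of $[\mathcal{L}]_{A^c}$ (equivalently of $\mathbbm{1}_{A^c} - [\hat P]_{A^c}$) is ensured by the irreducibility of $Y$, which guarantees that the walk killed on $A$ is transient on $A^c$.
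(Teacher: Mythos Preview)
Your proof is correct and follows essentially the same route as the paper: pass to the discrete skeleton via $\mathcal{L}=\bar\alpha(\hat P-\mathbbm{1})$, obtain $\hat P^A=[\hat P]_A+[\hat P]_{A,A^c}(\mathbbm{1}_{A^c}-[\hat P]_{A^c})^{-1}[\hat P]_{A^c,A}$ by a one-step decomposition plus the geometric series for the Green function on $A^c$, then subtract $\mathbbm{1}_A$ and multiply by $\bar\alpha$. The only cosmetic difference is that the paper writes the one-step decomposition and the Green-function sum in a single display rather than two, and you make the bookkeeping around the global choice of $\bar\alpha$ more explicit than the paper does.
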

\begin{proof}
Denote by $[\hat{P}]_A$, the sub-Markovian matrix $\hat{P}$ restricted to $A$.
Note that $[\hat{P}]_A$ is different from $\hat{P}^A$.
Due to \eqref{trace}, for any $x,y\in A$, we can write 

\begin{equation}\begin{aligned}\label{S1}
\hat{P}^A(x,y)&=\hat{P}(x,y)+\sum_{z,z'\in A^c}\hat{P}(x,z)\left(\sum_{k\geq 0}[\hat{P}]_{A^c}^k(z,z')\right)\hat{P}(z',y)
\\&
=[\hat{P}]_{A}(x,y)+\sum_{z,z'\in A^c}\hat{P}(x,z)\left(\mathbbm{1}_{A^c}-[\hat{P}]_{A^c}\right)^{-1}(z,z')\hat{P}(z',y).
\end{aligned}
\end{equation}

Subtracting $\mathbbm{1}_{A}$ on both side of \eqref{S1}, we obtain that
\begin{equation}\begin{aligned}\label{S2}
\hat{P}^A-\mathbbm{1}_{A}
&= [\hat{P}-\mathbbm{1}]_A -  [\hat{P}-\mathbbm{1}]_{A,A^c}\left([\hat{P}]_{A^c}-\mathbbm{1}_{A^c}\right)^{-1}[\hat{P}-\mathbbm{1}]_{A^c,A}.
\end{aligned}
\end{equation}
We then get our result by multiplying both sides by $\bar \alpha$.
\end{proof}

When ${\cal Y}$ contains an absorbing set $B$ 
and $A \subset B^c$ we can do the same
computation with the sub-Markovian generator $[{\cal L}]_{B^c}$
in place of ${\cal L}$.
For any $x$ and $y$ in $A$ the mean local time
in $y$ starting from $x$ and before hitting $B$ is the same for $Y$
and the trace process $Y^A$, i.e.
\begin{equation}
	G_B(x, y) = G_B^A(x, y),
\end{equation}
that is
\begin{equation} \label{cubo}
	\left[
		\left(
			[{\cal L}]_{B^c}
		\right)^{-1}
	\right]_A
	= \left(
		\left(
			[{\cal L}]_{B^c}
		\right)^A
	\right)^{-1}
	\,,
\end{equation}
or, to be more concise,  $[[{\cal L}]_{B^c}^{-1}]_A = ([{\cal L}]_{B^c}^A)^{-1}$.
More generally, one has the following
definition and properties.
\begin{definition}\label{Schur}{\bf{(Schur complement)}}
Let $\mathcal{M} = (\mathcal{M}(x, y))_{x, y \in {\cal X}}$ be an $n \times n$ matrix 
written as $2\times 2$ block matrix 
\begin{equation}\label{Blocks}
\mathcal{M} =\left[
	\begin{array}{cc}
		\mathcal{A} & \mathcal{B}\\ 
		\mathcal{C} & \mathcal{D} 
		\end{array}
	\right] \,,
\end{equation}
where ${\cal A} = [{\cal M}]_A = ({\cal M}(x, y))_{x, y \in A}$,
${\cal B} = [{\cal M}]_{A, A^c} = ({\cal M}(x, y))_{x \in A, y \not \in A}$,
${\cal C} = [{\cal M}]_{A^c, A} = ({\cal M}(x, y))_{x \not \in A, y \in A}$ and
${\cal D} = [{\cal M}]_{A^c} = ({\cal M}(x, y))_{x, y \not \in A}$
for some $A \subset {\cal X}$.
The Schur complement of ${\cal D}$ in ${\cal M}$ is defined as the matrix
\begin{equation}\label{SchurD}
	S_{{\cal M}}({\cal D}) = {\cal A} - {\cal B} {\cal D}^{-1} {\cal C} \,.
\end{equation}
\end{definition}
One can then check
\begin{equation}
	\left[
		\begin{array}{cc}
			{\cal A} & {\cal B} \\
			{\cal C} & {\cal D}
		\end{array}
	\right] 
	= \left[
		\begin{array}{cc}
			\mathbbm{1}_A & {\cal B} {\cal D}^{-1} \\
			0 & \mathbbm{1}_{A^c} 
		\end{array}
	\right] \left[
		\begin{array}{cc}
			S_{\cal M}({\cal D}) & 0 \\
			0 & {\cal D}
		\end{array}
	\right] \left[
		\begin{array}{cc}
			\mathbbm{1}_A & 0 \\
			{\cal D}^{-1}{\cal C} & \mathbbm{1}_{A^c}
		\end{array}
	\right] \,.
\end{equation} 
It follows
\begin{equation}\label{detM}
{\rm det}({\cal M})={\rm det}({\cal D}){\rm det}\left(S_{\cal M}({\cal D})\right)
\end{equation}
and, as a generalization of (\ref{cubo}),
\begin{equation}\label{inverse}
{\cal M}^{-1}=\left[\begin{array}{cc}
S_{{\cal M}}({\cal D})^{-1} & -S_{{\cal M}}({\cal D}){\cal B}{\cal D}^{-1}\\ 
-{\cal D}^{-1}{\cal C}S_{{\cal M}}({\cal D}) & {\cal D}^{-1}+{\cal D}^{-1}{\cal C}S_{{\cal M}}({\cal D}){\cal B}{\cal D}^{-1} 
\end{array}\right].
\end{equation}
In particular, from (\ref{inverse}) and (\ref{detM}),
\begin{equation}\label{detM-1}
	{\rm det}_A\left(
		{\cal M}^{-1}
	\right) = {\rm det}\left(
		S_{\cal M}({\cal D})^{-1}
	\right)
	= {\rm det}(S_{\cal M}({\cal D}))^{-1}
	= \frac{
		{\rm det}_{A^c}({\cal M})
	}{
		{\rm det}({\cal M})
	}
\end{equation}
This relation is used in the algebraic proof of Theorem \ref{DeterminantalRoots}.


\section{Lemma on hitting distributions and times}
\label{Wentzell}
In this section we use our forest measure analysis
to prove two formulas on the hitting distribution and the expectation of hitting times of a given subset of the given graph.
This result is originally due to Freidlin and Wentzell, see Lemmas 3.2 and 3.3 in~\cite{FW98}. 

\begin{lemma}\label{WentzellThm}{\bf{(Freidlin and Wentzell~\cite{FW98})}}
Fix a non-empty subset $R$ of $\mathcal{X}$. Recall the notation in \eqref{Rmeasures}. 
Consider the Markov process $X$ on $\mathcal{X}$ identified by \eqref{MarkovGenerator}, and let $T_R$ be the hitting time of the set $R$.
Then, for any $x$ and $y$ in ${\cal X}$, 
\begin{equation}\label{WilsonAbsorbingSet}
P_x(X(T_{R})=y)
= \frac{1}{Z_{R}(0)}
\sum_{
	\stackrel{
		\scriptstyle \phi:\rho(\phi)=R
	}{
		\scriptstyle \rho(\tau_x(\phi))=\{y\}
	}
}
w(\phi)
\end{equation}
and
\begin{equation}\label{WentzellFormula}
E_x[T_R] 
= \frac{1}{Z_R(0)}
\sum_{z \not\in R}
\sum_{
	\stackrel{
		\scriptstyle \phi:\rho(\phi)=R\cup\{z\}
	}{
		\scriptstyle \rho(\tau_x(\phi))=\{z\}
	}
}
w(\phi).
\end{equation}
\end{lemma}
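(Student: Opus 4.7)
The plan is to prove both identities by running Wilson's algorithm on the extended graph $\bar{\mathcal X}=\mathcal X\cup\{\Delta\}$ with a judicious choice of killing rates, using that the first loop-erased trajectory sampled by the algorithm has the law of the walk $X$ started at $x$, stopped at its first absorption time, after loop erasure.

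For \eqref{WilsonAbsorbingSet} I take $q(y)=+\infty$ for $y\in R$ and $q(y)=0$ for $y\notin R$, so that $S=R$ and $\nu_{0,R}$ is supported on forests with $\rho(\phi)=R$ and gives mass $w(\phi)/Z_R(0)$ to each such $\phi$ (by Corollary~\ref{WilsonProb} together with the identification $Z_R(0)=\det_{R^c}(-L)$ from Theorem~\ref{PartitionThm}). Choosing $x$ as the starting point of the first iteration, the initial loop-erased trajectory $\Gamma_{R\cup\{\Delta\}}$ cannot be absorbed at $\Delta$ (since $q\equiv 0$ on $R^c$), hence it is absorbed in $R$ at the endpoint of the loop erasure of $(X_t)_{t\le T_R}$; since loop erasure preserves the terminal vertex, this endpoint equals $X(T_R)$. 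This endpoint is precisely the root $\rho(\tau_\phi(x))$, so
\[
P_x\bigl(X(T_R)=y\bigr)=\mathbb{P}_{0,R}\bigl(\rho(\tau_\phi(x))=\{y\}\bigr)=\frac{1}{Z_R(0)}\sum_{\phi:\rho(\phi)=R,\,\rho(\tau_\phi(x))=\{y\}} w(\phi),
\]
which is the claim. Invariance of Wilson's output under the order in which one chooses starting points (used already in the proof of Corollary~\ref{WilsonProb}) justifies starting at $x$.

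For \eqref{WentzellFormula} I use the Laplace-transform identity
\[
E_x[T_R]=\lim_{q\to 0^+}\frac{1}{q}\,E_x\bigl[1-e^{-q T_R}\bigr]=\lim_{q\to 0^+}\frac{1}{q}\,P_x\bigl(\widetilde T_q<T_R\bigr),
\]
with $\widetilde T_q\sim\mathrm{Exp}(q)$ independent of $X$. I now take $q(y)=q>0$ for $y\notin R$ and $q(y)=+\infty$ for $y\in R$, so that $\nu_{q,R}$ is supported on forests with $R\subset\rho(\phi)$. Starting Wilson's algorithm at $x$, exactly the same argument as above shows that the first loop-erased trajectory is absorbed at $\Delta$ iff the $q$-clock on the trajectory rings strictly before $X$ hits $R$; equivalently, $\rho(\tau_\phi(x))\notin R$ iff $\widetilde T_q<T_R$, so
\[
P_x\bigl(\widetilde T_q<T_R\bigr)=\sum_{y\notin R}\mathbb{P}_{q,R}\bigl(\rho(\tau_\phi(x))=\{y\}\bigr)=\frac{1}{Z_R(q)}\sum_{y\notin R}\sum_{\substack{\phi:\rho(\tau_\phi(x))=\{y\}\\ R\subset \rho(\phi)}}q^{|\rho(\phi)\setminus R|}w(\phi).
\]
Each such $\phi$ has $|\rho(\phi)\setminus R|\ge 1$ (since $y\notin R$ is itself a root), so the leading contribution in $q$ is $q\cdot w(\phi)$, coming from forests with $\rho(\phi)=R\cup\{y\}$; higher-order terms are $O(q^2)$ uniformly in the (finite) sum, and $Z_R(q)\to Z_R(0)$ as $q\to 0$. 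Dividing by $q$ and passing to the limit yields \eqref{WentzellFormula}.

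The main technical points are the bookkeeping in the two applications of Wilson's algorithm — identifying, in the first iteration, the root of $\tau_\phi(x)$ with either $X(T_R)$ or the killing event $\{\widetilde T_q<T_R\}$ — and the routine $q\to 0^+$ expansion in the second part; both are direct consequences of Proposition~\ref{Marchal} and Corollary~\ref{WilsonProb}.
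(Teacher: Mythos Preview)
Your argument is correct. For \eqref{WilsonAbsorbingSet} you do exactly what the paper does: run Wilson's algorithm with $S=R$ and $q\equiv 0$ on $R^c$, start the first iteration from $x$, and read off the hitting point of $R$ as the root of $\tau_\Phi(x)$.

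For \eqref{WentzellFormula} your route genuinely differs from the paper's. The paper proceeds via Green's functions: it writes $E_x[T_R]=\sum_{y\notin R}G_R(x,y)$, expresses $G_R(x,y)$ through the discrete skeleton as $P_x(T_y<T_R)$ divided by an escape probability from $y$, and then applies \eqref{WilsonAbsorbingSet} (with $R$ replaced by $R\cup\{y\}$) to both numerator and denominator to obtain the pointwise identity
\[
G_R(x,y)=\frac{1}{Z_R(0)}\sum_{\substack{\phi:\rho(\phi)=R\cup\{y\}\\ \rho(\tau_\phi(x))=\{y\}}}w(\phi);
\]
summing over $y\notin R$ then gives the result. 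Your approach instead introduces a small uniform killing rate $q$ on $R^c$, identifies $P_x(\widetilde T_q<T_R)$ with $\mathbb P_{q,R}\bigl(\rho(\tau_\Phi(x))\notin R\bigr)$ via the first step of Wilson's algorithm, and extracts the linear-in-$q$ coefficient of the resulting (finite, hence polynomial) forest sum as $q\to 0$. This is arguably cleaner and stays entirely within the forest-measure formalism the paper develops; on the other hand the paper's argument yields the stronger pointwise Green-function identity above as a by-product, which your limiting argument does not directly give.
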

\begin{proof}
Consider the extended space $\bar{\mathcal{X}}$ and the extended Markov process $\bar{X}$ as in \eqref{ExtendedMarkovGenerator}, 
with the killing rates as in \eqref{KillingR} for $q = 0$.
Equation (\ref{WilsonAbsorbingSet}) is simply obtained
by considering Wilson's algorithm started from $x$.

To prove (\ref{WentzellFormula})
we will use the discrete skeleton
of the absorbed process of Section \eqref{loops} with $\mathcal{Y}=\bar{\mathcal{X}}$ and $B=R$.
Let
\begin{equation}\label{GreensR}
G_R(x,y)= E_x[\ell_y(T_R)] \quad\text{and}\quad \hat{G}_R(x,y)= E_x[\ell_y(\hat{T}_R)], \quad x, y \in {\cal X},
\end{equation}
be the continuous and the discrete time Green's functions before hitting the set $R$, respectively.

Since $E_x[T_R]=\sum_{z\notin R}G_R(x,z)$, it suffices to show that, for all $z \not\in R$,
\begin{equation}\label{goal}
G_R(x,z)
= \frac{1}{Z_R(0)}
\sum_{
	\stackrel{
		\scriptstyle \phi:\rho(\phi)=R\cup\{z\}
	}{
		\scriptstyle \rho(\tau_x(\phi))=\{z\}
	}
}
w(\phi)
\,.
\end{equation}
Since
$\hat{G}_R(x,z)
= P_x(\hat T_z < \hat T_R) \hat G_R(z, z)
=P_x(\hat{T}_z<\hat{T}_R)/P_z(\hat{T}^+_z>\hat{T}_R),$ with $\hat{T}^+_z$
being the return time to $z$,
\begin{equation}
	\begin{aligned} \label{PotentialGreen2}
		G_R(x,z)
		& = \frac{1}{\bar \alpha} \hat{G}_R(x,z)
		= \frac{P_x(\hat T_z < \hat T_R)}{\bar \alpha P_z(\hat T^+_z > \hat T_R)}
		= \frac{P_x(\hat T_z < \hat T_R)}{\bar \alpha \sum_{y\neq z} \hat P(z,y) P_y(\hat T_z > \hat T_R)} \\
		& = \frac{P_x(T_z < T_R)}{\sum_{y \neq z} \alpha(z,y) \left[
			1-P_y(T_z < T_R)
		\right]}
		\,.
	\end{aligned}
\end{equation}
Observe that $P_x(T_z < T_R) = P_x(X(T_{R \cup \{z\}}) = z)$
for any $z \in R^c$, thus using \eqref{WilsonAbsorbingSet}:
\begin{equation}
	\begin{aligned}
		G_R(x,z)
		& = \frac{P_x(X(T_{R \cup \{z\}})=z)}{\sum_{y\neq z}\left[1-P_y(X(T_{R \cup \{z\}})=z)\right] \alpha(z,y)}\\
		&= \frac{
			\frac{1}{Z_{R \cup \{z\}}(0)}
			\sum_\phi \mathbbm{1}_{
				\left\{
					\rho(\phi) = R \cup \{z\},\,
					\rho(\tau_x(\phi)) = \{z\} 
				\right\}
			} w(\phi) 
		}{
			\sum_{y \neq z}
			\frac{1}{Z_{R\cup\{z\}}(0)}
			\sum_\phi \mathbbm{1}_{
				\left\{
					\rho(\phi) = R \cup \{z\},\,
					\rho(\tau_y(\phi)) \neq \{z\}
				\right\}
			} w(\phi) \alpha(z,y)
		} \\
		&= \frac{1}{Z_R(0)}
		\sum_{
			\stackrel{
				\scriptstyle \phi:\rho(\phi)=R\cup\{z\}
			}{
				\scriptstyle \rho(\tau_x(\phi))=\{z\}
			}
		}
		w(\phi).
	\end{aligned}
\end{equation}
\end{proof}


\section{Divided differences}\label{DividedDiff}
In this appendix, we recall three equivalent definitions of the notion of divided differences of a real function. 
We further give a lemma due to Micchelli and Willoughby
for which we provide an alternative elementary proof which plays with these different definitions.
This result is used in Section \ref{MW}.

\begin{definition}{\bf(Divided differences 1)}
We call divided difference of a function $f$ at the distinct points $x_0$, $x_1$, \dots, $x_k$, the quantity $f[x_0,\cdots,x_k]$ recursively defined via
\begin{equation}\label{DivDiff1}
f[x_0,\cdots,x_k]= \frac{f[x_1,\cdots,x_k]-f[x_0,\cdots,x_{k-1}]}{x_k-x_0},
\end{equation}
with 
\begin{equation}
	f[x_i]=f(x_i)
	\,.
\end{equation}
\end{definition}
From this definition, we see that the divided differences at $k$ points of a function $f$ can be seen as a sort of $k$-th discrete derivative.
One then show by induction
\begin{definition}{\bf(Divided differences 2)}
For any function $f$ and distinct points $x_0$, $x_1$,~\dots, $x_{k - 1}$, $x_k$,
\begin{equation}\label{DivDiff2}
f[x_0,\cdots,x_k]= \sum_{i=0}^k\frac{f(x_i)}{\prod_{j\neq i}(x_i-x_j)}.
\end{equation}
\end{definition}
Note in this second definition, that $f[x_0,\cdots,x_k]$ is independent of the order of the $x_i$'s.
From (\ref{DivDiff2}) one can then check
\begin{definition}{\bf(Divided differences 3)} \label{papelito}
For any function $f$ and distinct points $x_0$, $x_1$,~\dots, $x_{k - 1}$, $x_k$,
\begin{equation}\label{DivDiff3}\begin{aligned}
Q(x)=&f[x_0]+f[x_0,x_1](x-x_0)+f[x_0,x_1,x_2](x-x_0)(x-x_1)\\&+\ldots+f[x_0,\cdots,x_k](x-x_0)\ldots(x-x_{k-1})
\end{aligned}
\end{equation}
is the unique polynomial of degree $k$ with $Q(x_i)=f(x_i)$, for $i=0,\cdots,k.$
\end{definition}

\begin{lemma}\label{DivDif}
{\bf (Micchelli and Willoughby \cite{MW79})}
Consider a polynomial of degree $n$ of the form 
\begin{equation}
	f(x)=\prod_{i=0}^{n - 1}(x-\alpha_i)
	\,, 
\end{equation}
with $n$ distinct real zeros $\alpha_i$ in decreasing order:
$\alpha_0 > \alpha_1 > \cdots > \alpha_{n - 1}$.
Let $\beta_0 > \beta_1 > \cdots > \beta_N$ with $N \geq n$
and such that
\begin{equation}\label{beta}
\beta_i\geq \alpha_i, \quad \mbox{for all $i < n$}.
\end{equation}
Then, for any $k\leq N$, 
\begin{equation}
	f[\beta_0,\beta_{1},\dots,\beta_{k}]\geq 0
	\,.
\end{equation}
\end{lemma}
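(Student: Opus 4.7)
The plan is to split off the trivial range $k \geq n$ and to handle the substantive case $k < n$ by induction on the degree $n$, using the Leibniz rule for divided differences as the main tool. For $k \geq n$ nothing has to be proved: Definition~3 shows that the divided difference $f[\beta_0,\dots,\beta_k]$ equals the leading coefficient of $f$ when $k = n$ (hence $1$, since $f$ is monic) and vanishes when $k > n$. In both cases the quantity is non-negative, so from now on I assume $k < n$.

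For $k < n$, I would apply the Leibniz-type formula for divided differences,
\begin{equation*}
(uv)[x_0, \dots, x_k] = \sum_{i = 0}^{k} u[x_0, \dots, x_i]\, v[x_i, \dots, x_k],
\end{equation*}
to the factorization $f = u g$ with $u(x) := x - \alpha_0$ and $g(x) := \prod_{i = 1}^{n - 1}(x - \alpha_i)$. Since $u$ is affine, Definition~1 yields $u[\beta_0] = \beta_0 - \alpha_0$, $u[\beta_0,\beta_1] = 1$, and $u[\beta_0, \dots, \beta_i] = 0$ for $i \geq 2$, so the Leibniz sum collapses to the two-term identity
\begin{equation*}
f[\beta_0, \dots, \beta_k] = (\beta_0 - \alpha_0)\, g[\beta_0, \dots, \beta_k] + g[\beta_1, \dots, \beta_k] \qquad (k \geq 1),
\end{equation*}
with the specialization $f[\beta_0] = (\beta_0 - \alpha_0) g(\beta_0)$ when $k = 0$. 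Because $\beta_0 - \alpha_0 \geq 0$ by hypothesis, the proof reduces to showing that both divided differences of $g$ on the right are non-negative.

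This is handled by the inductive hypothesis applied to $g$, a monic polynomial of degree $n - 1$ with distinct real zeros $\alpha_1 > \cdots > \alpha_{n - 1}$. For $g[\beta_0, \dots, \beta_k]$, the required interlacing is $\beta_i \geq \alpha_{i + 1}$ for $i < n - 1$, which follows at once from $\beta_i \geq \alpha_i > \alpha_{i + 1}$. For $g[\beta_1, \dots, \beta_k]$, applying the statement to the shifted list $\beta_1 > \cdots > \beta_N$ requires $\beta_j \geq \alpha_j$ for $1 \leq j \leq n - 1$, which is part of the original hypothesis; the cardinality condition $N \geq n$ ensures that $N - 1 \geq n - 1$ points remain after the shift, so the hypotheses of the statement for $g$ are met in both applications. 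The base case $n = 0$ is immediate as $f \equiv 1$. I do not expect any substantial obstacle: the only delicate point is the bookkeeping of the two reindexings in the inductive step, and the strict inequalities $\alpha_i > \alpha_{i + 1}$ are precisely what converts the interlacing between $(\beta_i)$ and $(\alpha_i)$ into the slightly weaker interlacing between $(\beta_i)$ and $(\alpha_{i + 1})$ used for $g$.
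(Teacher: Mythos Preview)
Your proof is correct and takes a genuinely different route from the paper's. The paper inducts on $r = n - k$ (for fixed $n$) and argues by successively replacing $\alpha_j$ by $\beta_j$ in $f[\alpha_0,\dots,\alpha_k] = 0$: each replacement step uses the recursion \eqref{DivDiff1} to produce a divided difference at $k+2$ points, which is non-negative by the inductive hypothesis, and $\beta_j - \alpha_j \geq 0$ controls the sign. You instead induct on the degree $n$, exploit the multiplicative structure $f = (x-\alpha_0)\,g$, and collapse the Leibniz rule for divided differences to the two-term identity $f[\beta_0,\dots,\beta_k] = (\beta_0-\alpha_0)\,g[\beta_0,\dots,\beta_k] + g[\beta_1,\dots,\beta_k]$, reducing everything to the degree-$(n-1)$ case for $g$. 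Your reindexing checks are accurate: $\beta_i \geq \alpha_i > \alpha_{i+1}$ gives the interlacing for the first term, and the original hypothesis covers the shifted list in the second. What your approach buys is a cleaner recursion that respects the product form of $f$; what it costs is the Leibniz identity, which is standard but not among the three characterizations the paper records (so in this paper's context you would need to state and justify it). The paper's approach, by contrast, stays entirely within Definitions~\ref{DivDiff1}--\ref{DivDiff3} and has a ``monotone deformation from $\alpha$ to $\beta$'' flavour that matches how the lemma is applied in Section~\ref{MW}.
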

\begin{proof}
We prove the following statement by induction on $r = n - k$:
\begin{equation} \label{chiocciola}
	\mbox{
		``For any $\beta_0 > \beta_1 > \cdots > \beta_N$
		satisfying (\ref{beta}),
		$f[\beta_0, \dots, \beta_k] \geq 0$.''
	}
\end{equation}
Since $f$ is a polynomial of degree $n$,
(\ref{chiocciola}) follows from Definition \ref{DivDiff3}
as soon as $r < 0$.
Also, since the dominant coefficient of $f$ is~$1$,
the same argument shows $f[\beta_0, \dots, \beta_n] = 1$
and the claim holds for $r = 0$.
Fix now $r > 0$, i.e. $k < n$, 
and $\beta_0 > \cdots > \beta_N$ satisfying (\ref{beta}).
If $\beta_0 \neq \alpha_0$ then
\begin{align}
	\frac{
		f[\beta_0, \alpha_1, \dots, \alpha_k] - f[\alpha_0, \alpha_1, \dots, \alpha_k]
	}{
		\beta_0 - \alpha_0
	}
	& = \frac{
		f[\beta_0, \alpha_1, \dots, \alpha_k] - f[\alpha_1, \dots, \alpha_k, \alpha_0]
	}{
		\beta_0 - \alpha_0
	} \label{crema} \\
	& = f[\beta_0, \alpha_1, \dots, \alpha_k, \alpha_0]\\
	& = f[\beta_0, \alpha_0, \alpha_1, \dots, \alpha_k] 
	. \label{nuvola}
\end{align} 
By Definition \ref{DivDiff2} we have $f[\alpha_0, \dots, \alpha_k] = 0$
and the numerator in the l.h.s. of (\ref{crema})
is merely equal to $f[\beta_0, \alpha_1, \dots, \alpha_k]$.
The denominator is positive from (\ref{beta}) and so is the r.h.s.
of (\ref{nuvola}) by induction hypothesis.
It follows that
\begin{equation}
	f[\beta_0, \alpha_1, \dots, \alpha_k] \geq 0
\end{equation}
and the same is true when $\beta_0 = \alpha_0$.
If $\beta_1 \neq \alpha_1$, 
we compute
\begin{equation}
	\frac{
		f[\beta_0, \beta_1, \alpha_2, \dots, \alpha_k] - f[\beta_0, \alpha_1, \dots, \alpha_k]
	}{
		\beta_1 - \alpha_1
	} = f[\beta_0, \beta_1, \alpha_1, \dots, \alpha_k]
	,
\end{equation}
get then in the same way 
\begin{equation}
	f[\beta_0, \beta_1, \alpha_2, \dots, \alpha_k] \geq 0
\end{equation}
and the same is true when $\beta_1 = \alpha_1$.
Proceeding similarly we eventually
obtain that 
\begin{equation}
	f[\beta_0, \dots, \beta_k] \geq 0.
\end{equation}
\end{proof}

\section{Proof of Theorem~\ref{borgorosa}} \label{merlo}

To prove~\eqref{tenda} we first note
that the left-hand side is equal to zero
as soon as there are two distinct edges $e$ and $e'$
in $A$ such that $e_- = e'_-$, and so is the right-hand side,
since one gets two proportional columns in the corresponding restricted matrix.
We then assume that for each $x$ in 
\begin{equation}
	A_- = \left\{
		x \in {\cal X} :\: \exists e \in A, e_- = x
	\right\}
\end{equation}
there is a unique $e$ in $A$ such that $x = e_-$.
Let us compute 
\begin{equation}\label{finestra}
	{\mathbb P}(A \subset \Phi_Q)
	= \frac{1}{Z_Q} \sum_{\phi \ni e_1, \dots, e_k} w_Q(\phi)
	\,.
\end{equation}
By Theorem~\ref{PartitionThm}
$Z_Q = \det(Q - L)$
and the sum appearing in~\eqref{finestra}
is equal to $\det(\tilde Q - \tilde L)$,
with $\tilde L$ the Markovian generator obtain from $L$
by setting to zero all rates $w(x,y)$ such that $x \in A_-$
and $(x, y) \neq e \in A$ with $e_- = x$, 
and with $\tilde Q$ obtained from $Q$ just by setting to zero
all the rates $q(x)$ for $x$ in $A_-$.

This gives
\begin{equation}
	{\mathbb P}(A \subset \Phi_Q)
	= \frac{
		\det(\tilde Q - \tilde L) 
	}{
		\det(Q - L)
	}
	= \det\left(
		(\tilde Q - \tilde L) (Q - L)^{-1}
	\right)
	= \det\left(
		(\tilde Q - \tilde L) G_Q
	\right)
\end{equation}
and we can compute row by row this matrix product.
Observing that, for each $e \in A$, $-w(e)$ has to be the diagonal term
in each line $x = e_-$ of the matrix representation of $\tilde L$,
we obtain
\begin{equation}
	(\tilde Q - \tilde L) G_Q(x, \cdot) 
	= \left\{
		\begin{array}{ll}
			{\mathbbm 1}_{\{x = \cdot\}} & \mbox{if $x \not \in A_-$} \\
			w(e) G_Q(e_-, \cdot) - w(e) G_Q(e^+, \cdot) & \mbox{if $x = e_-$ for $e \in A$}
		\end{array}
	\right.
\end{equation}
We then get~\eqref{tenda} by using the bloc triangular structure
of this product and redistributing the terms of the product $\prod_{e \in A} w(e)$ 
in the computed determinant.

To prove~\eqref{anatra} we introduce the matrices
\begin{equation}
	M = (K_Q^+(e, e'))_{e, e' \in A}
	\quad {\rm and} \quad
	N = (K_Q^+(e, -e'))_{e, e' \in A}
	\,,
\end{equation}
observe that, for all $e$ and $e'$ in ${\cal E}$,
\begin{equation}
	K^+_Q(-e, e') = - K^+_Q(e, e')
\end{equation}
and note that
\begin{equation}
	{\mathbb P}(\pm e_1, \dots, \pm e_k \in \Phi_Q)
	= \sum_{\sigma \in \{-, +\}^k} {\mathbb P}(\sigma _1 e_1, \dots, \sigma _k e_k \in \Phi_Q)
\end{equation}
is the coefficient of degree $k$ in the polynomial in $\lambda$
\begin{equation}
	\det\left[
		\lambda + K_Q^+
	\right]_{A \cup -A}
	= \left|
		\begin{array}{c|c}
			\lambda I + M & N \\
			\hline
			- M & \lambda I - N
		\end{array} 
	\right|
\end{equation}
where $I$ is the $k \times k$ identity matrix.
Elementary operations on rows and columns give then
\begin{equation}
	\det\left[
		\lambda + K_Q^+
	\right]_{A \cup -A}
	= \left|
		\begin{array}{c|c}
			\lambda I + M & N \\
			\hline
			\lambda I & \lambda I 
		\end{array} 
	\right|
	= \left|
		\begin{array}{c|c}
			\lambda I + M - N & N \\
			\hline
			0 & \lambda I 
		\end{array} 
	\right|
	= \lambda^k \det(\lambda + M - N)
	\,.
\end{equation}
The coefficient of degree $k$ in this polynomial in $\lambda$
is then
\begin{equation}
	{\mathbb P}(\pm e_1, \dots, \pm e_k \in \Phi_Q)
	= \det(M - N)
	= {\rm det}_A {\cal K}_Q
	\,.
\end{equation}	
\hfill$\Box$

\bigskip\par\noindent
{\bf Acknowledgements:}
A.G. thanks Laurent Tournier
for the many fruitful discussions on the matter,
and, in particular, the nice independence argument
we presented in Section~\ref{Background}.
He also thanks Pablo Ferrari, In\`es Armendariz,
Pablo Groisman, Leonardo Rolla and Matthieu Jonckheere
for their hospitality at Buenos Aires where all
this work started.
L.A. has been supported by DFG SPP Priority Programme 1590
Probabilistic Structures in Evolution and by NWO Gravitation Grant 024.002.003-NETWORKS.



\begin{thebibliography}{99}
\bibitem{AT89}
V.\ Anantharam and P.\ Tsoucas, A proof of the Markov chain tree theorem,
\emph{Statist. Probab. Lett.}, 8(2), 189--192 (1989).

\bibitem{BP93}
R.\ Burton and R.\ Pemantle, Local characteristics, entropy and limit theorems for spanning trees and domino tilings via transfer-impedances,
\emph{Ann. Probab.} 21, 1329--1371 (1993).

\bibitem{C13}
Y. Chang,
{\em Contribution \`a l'\'etude des lacets markoviens.}
Universit\'e Paris Sud - Paris XI (2013) <tel-00846462>

\bibitem{CA02}
P.\ Chebotarev and  R.\  Agaev,
Forest matrices around the Laplacian matrix,
{\em Linear Algebra and its Applications},
356(1–3), 253--274 (2002).

\bibitem{DF}
P.\ Diaconis and W.\ Fulton.  A growth model, a game, an algebra, Lagrange inversion, and characteristic classes.
\emph{Rend. Sem. Mat. Univ. Pol. Torino}, 49(1), 95--119 (1991).

\bibitem{DM09}
P.\ Diaconis and L.\ Miclo,
On times to quasi-stationarity for birth and death processes, 
\emph{J. Theoret. Probab.}
22(3), 558--586 (2009).

\bibitem{HKPV06}
J.\ B.\ Hough, M.\ Krishnapur, Y.\ Peres and B.\ Virag, Determinantal
processes and independence, \emph{Probab. Surv.} 3, 206--229 (2006).

\bibitem{F09}
J.\ Fill, On hitting times and fastest strong stationary times for skip-free and more general chains,
\emph{J. Theor. Probab.} 22, 587--600 (2009).

\bibitem{FW98}
M.\ I.\ Freidlin and A.\ D.\ Wentzell, \emph{Random perturbations of dynamical systems}, Grundlehren der Mathematischen Wissenschaften 260, 
(Second edition ed.). New York: Springer-Verlag (1998).

\bibitem{GW04}
G.\ R.\ Grimmet and S.\ N.\ Winkler,
Negative association in uniform forests and connected graphs,
{\em Random Structures Algorithms}
24, 444--460 (2004).

\bibitem{G06}
G.\ R.\ Grimmett, \emph{The random-cluster model}, Grundlehren der mathematischen Wissen\-schaften 333, Springer (2006).

\bibitem{K00}
J.\ Kahn,
A normal law for matchings,
{\em Combinatorica}
20(3), 339--391 (2000)

\bibitem{JRS11}
A.\ A.\ J\'arai, F.\ Redig and E.\ Saada, Zero dissipation limit in the Abelian sandpile
model, \emph{Preprint: arXiv:0906.3128v3} (2011).

\bibitem{K847}
G.\ Kirchhoff,
\"Uber di Aufl\"osung der Gleichungen, auf welche man bei der Untersuchung der linearen Verteilung galvnischer Str\"ome gef\"uhrt wird,
\emph{Ann. Phys. chem} 72, 497--508 (1847). 

\bibitem{LJ11}
Y.\ Le Jan,
\emph{Markov paths, loops and fields}, 
vol. 2026 of Lecture Notes in Mathematics. 
Springer, Heidelberg, 2011. 
Lectures from the 38th Probability Summer School held in Saint-Flour, 2008,
Ecole d'\'Et\'e de Probabilit\'es de Saint-Flour.

\bibitem{LPW08}
D.\ A.\ Levin, Y.\ Peres and E.\ L.\ Wilmer, \emph{Markov chains and mixing times}, 
American Mathematical Society (2008).

\bibitem{M00}
P.\ Marchal, Loop-erased random walks, spanning trees and Hamiltonian cycles, 
\emph{Elect. Comm. Probab.} 5 , 39--50 (2000).

\bibitem{MW79}
C.\ A.\ Micchelli and R.\ A.\ Willoughby, On functions which preserve the class of Stieltjes matrices,
\emph{Lin. Alg. Appl.} 23, 141--156 (1979).

\bibitem{M10}
L.\ Miclo, On absorption times and Dirichlet eigenvalues, \emph{ESAIM: Probability and Statistics} 14, 117--150 (2010).

\bibitem{PW98}
J.\ Propp and D.\ Wilson, How to get a perfectly random sample from a generic Markov chain and generate a random spanning tree of a directed graph,
\emph{Journal of Algorithms} 27, 170--217 (1998).

\bibitem{S75}
G.\ S.\ Sylvester, Representations and inequalities for Ising model Ursell functions,
\emph{Comm. Math. Phys.} 42, 209--220 (1975).

\bibitem{TK00}
V.\ T.\ Tsuchiya and M.\ Katori, 
Proof of breaking of self-organized criticality in a nonconservative abelian sandpile model, 
\emph{Phys. Rev. E} 61, 1183--1188 (2000).

\bibitem{W96}
D.\ Wilson, Generating random spanning trees more quickly than the cover time,
\emph{Proceedings of the twenty-eighth annual acm symposium on the theory of computing}, 296--303 (1996).

\end{thebibliography}
\end{document}